\def\End{\operatorname{End}}
\def\id{\operatorname{id}}
\def\max{\operatorname{max}}
\def\id{\operatorname{id}}
\def\clsp{\overline{\operatorname{span}}}
\def\C{\mathbb{C}}
\def\F{\mathbb{F}}
\def\N{\mathbb{N}}
\def\Z{\mathbb{Z}}
\def\P{\mathbb{P}}
\def\Q{\mathbb{Q}}
\def\TT{\mathcal{T}}
\def\OO{\mathcal{O}}
\def\KK{\mathcal{K}}
\def\DD{\mathcal{D}}
\def\CC{\mathcal{C}}
\def\SS{\mathcal{S}}
\def\JJ{\mathcal{J}}
\def\QQ{\mathcal{Q}}
\def\gxp{G\rtimes_\theta P}
\def\nxnx{\mathbb{N}\rtimes\mathbb{N}^\times}
\newcommand{\Chi}{\raisebox{2pt}{\ensuremath{\chi}}}
\newtheorem{thm}{Theorem}[section]
\newtheorem{cor}[thm]{Corollary}
\newtheorem{lemma}[thm]{Lemma}
\newtheorem{prop}[thm]{Proposition}
\theoremstyle{definition}
\newtheorem{definition}[thm]{Definition}
\newtheorem{notation}[thm]{Notation}
\theoremstyle{remark}
\newtheorem{remark}[thm]{Remark}
\newtheorem{example}[thm]{Example}
\numberwithin{equation}{section}
\begin{document}

\title[On $C^*$-algebras associated to right LCM semigroups]{On $C^*$-algebras associated to right LCM semigroups}

\author{Nathan Brownlowe}
\address{School of Mathematics and Applied Statistics \\ University of Wollongong \\ Australia}
\email{nathanb@uow.edu.au}

\author{Nadia S.~Larsen}
\address{Department of Mathematics \\ University of Oslo \\P.O. Box 1053, Blindern\\0316 Oslo\\ Norway}
\email{nadiasl@math.uio.no}

\author{Nicolai Stammeier}
\address{Mathematisches Institut, Westf\"{a}lischen Wilhelms-Universit\"{a}t M\"{u}nster \\ Germany}
\email{n.stammeier@wwu.de}

\thanks{Part of this research was carried out while all three authors participated in the workshop "Operator algebras and dynamical systems from number theory" in November 2013 at the Banff International Research Station, Canada. We thank BIRS for hospitality and excellent working environment. The third author was supported by DFG through SFB $878$ and by ERC through AdG $267079$.}

\begin{abstract} We initiate the study of the internal structure of $C^*$-algebras associated to a left cancellative semigroup in which any two principal right ideals are either disjoint or intersect in another principal right ideal; these are variously called right LCM semigroups or semigroups that satisfy Clifford's condition. Our main findings are results about uniqueness of the full semigroup $C^*$-algebra. We build our analysis upon a rich interaction between the group of units of the semigroup and the family of constructible right ideals. As an application we identify algebraic conditions on $S$ under which $C^*(S)$ is purely infinite and simple.
\end{abstract}

\date{22 June 2014. Revised on 24 November 2014.}
\maketitle

\section{Introduction}\label{sec: intro}
\noindent In recent years, $C^*$-algebras associated to semigroups have received much
attention due to the
range of new examples and interesting applications that they encompass. One
such application is to the connections between operator algebras and number
theory, which have grown deeper since Cuntz's work in \cite{Cun1} on the
$C^*$-algebra $\QQ_\N$ associated to the affine semigroup over the natural
numbers $\nxnx$. Laca and Raeburn \cite{LacR2} continued the
analysis of $C^*$-algebras associated to $\nxnx$ by examining the Toeplitz
algebra $\TT(\nxnx)$, including an analysis of its KMS structure. Cuntz,
Deninger and Laca \cite{CDL} have since examined the KMS structure of
Toeplitz-type
$C^*$-algebras associated to $ax+b$-semigroups $R\rtimes R^\times$ of rings of
integers $R$ in number fields.

Li has recently defined $C^*$-algebras associated to left cancellative
semigroups $S$ with identity, and initiated a study of when certain naturally
arising $*$-homomorphisms are injective \cite{Li1, Li2}. The reduced
$C^*$-algebra $C_r^*(S)$ associated to $S$ is defined by means of the left regular
representation of $S$ on the Hilbert space $\ell^2(S)$. The full $C^*$-algebra
$C^*(S)$ is defined to be the universal $C^*$-algebra generated by isometries
and projections, subject to certain relations which are imposed by the regular
representation. For certain classes of semigroups, the
canonical isomorphism between the full and reduced semigroup $C^*$-algebras
was established in \cite{Li1, Li2, No0}.

In \cite{BRRW}, the authors studied the full semigroup $C^*$-algebra arising from an algebraic construction
called a Zappa-S{\'z}ep product of semigroups. The resulting semigroups display ordering features similar to
the quasi-lattice ordered semigroups introduced by Nica \cite{Ni}, but by contrast contain a non-trivial group of
units. These semigroups were called right LCM (for least common multiples) in \cite{BRRW}, and we
shall henceforth use this terminology, but mention  that in \cite[\S 4.1]{Law2} and \cite{No0} these are known as
semigroups that satisfy Clifford's condition. The class of right LCM semigroups is pleasantly large and includes quasi-lattice
ordered semigroups, certain semidirect products of semigroups, and also semigroups that model self-similar group actions, see \cite{Law1, LW, BRRW}.

In the present work we begin a study of the internal structure of $C^*$-algebras
associated to right LCM semigroups. The main thrust of our work is that when $S$ is a right LCM semigroup one may unveil the internal structure of $C^*(S)$ and answer questions about its uniqueness by carefully analysing the relationship between the group of units
$S^*$  and the constructible right ideals of $S$.

The problem of finding good criteria for injectivity of $*$-homomorphisms on
$C^*(S)$ and in particular to decide uniqueness of such $C^*$-algebras is at the moment not settled in the generality of
left cancellative semigroups. A powerful method to prove injectivity of
$*$-representations was developed by Laca and Raeburn in \cite[Theorem 3.7]{LaRa} for
$C^*(S)$ with $(G, S)$ quasi-lattice ordered.
Their work recasted Nica's $C^*$-algebras associated to quasi-lattice ordered
groups in \cite{Ni} by viewing them as $C^*$-crossed products by semigroups of
endomorphisms. Based on this realisation, they adapted a technique introduced
by Cuntz in \cite{Cun} which involved expecting onto a diagonal subalgebra.

There are new technical obstacles to be overcome when dealing with a semigroup
$S$ that has a non-trivial group of units. In
particular, not all of Laca and Raeburn's programme can be carried through
beyond the case of quasi-lattice ordered pairs. One challenge is that the
diagonal subalgebra of $C^*(S)$, denoted $\DD$ in \cite{Li1}, may be too small
to accommodate the range of a conditional expectation from $C^*(S)$, cf. an
observation made in \cite{No0}. Furthermore, generating isometries in $C^*(S)$
that correspond to elements from the group of units $S^*$ give rise to
unitaries. These unitaries together with the generating projections from
$\DD$ yield two new subalgebras of $C^*(S)$ whose role in explaining the
structure of $C^*(S)$ is yet to be fully understood.

Our initial approach was to push to the fullest extent the Laca-Raeburn
strategy to an arbitrary right LCM semigroup $S$, with or without an identity.
It soon became evident that the presence of non-trivial units in $S^*$ makes it
unlikely that \cite[Theorem 3.7]{LaRa} will extend in the greatest generality
to right LCM semigroups. However, by carefully analysing the action of the group of units $S^*$ on the
constructible right ideals $\JJ(S)$ of $S$ we are able to identify conditions on $S$ which ensure that
injectivity of $*$-homomorphisms on $C^*(S)$ can be characterised on $\DD$. This approach has lead us to
find conditions on a right LCM
semigroup $S$ which ensure that $C^*(S)$ is purely infinite and simple. The examples we
have of such semigroups belong to a class of semidirect products $\gxp$ of a
group $G$ by an injective endomorphic action $\theta$ of a semigroup $P$.
$C^*$-algebras associated to such semidirect products where $P=\mathbb{N}$
were studied by Cuntz and Vershik in \cite{CV}, and  by Vieira in
\cite{Vie}. Our $C^*(\gxp)$ may be interpreted as higher dimensional versions
of those  $C^*$-algebras. We mention that K-theory and internal structure of $C^*$-algebras
associated to $ax+b$-semigroups of certain integral domains were analysed recently by Li, see \cite{Li3}.

The organisation of the paper is as follows. In Section~\ref{sec: background}
we collect some standard results about semigroups. We also introduce our
conventions on semidirect product semigroups, and identify an abstract
characterisation of the examples of interest $\gxp$. Section~\ref{sec: right
LCM algebras} contains an introduction to right LCM semigroups, and their
associated full and reduced $C^*$-algebras. Since we do not assume that $S$
necessarily contains an identity element, we explain how the definitions of
$C_r^*(S)$ and $C^*(S)$ from \cite{Li1} can be adapted to this, slightly more
general, situation. In the same section we introduce the distinguished
subalgebras of interest, which are built out of $\DD$ and the unitaries coming
from the group of units $S^*$. We also discuss conditional expectations onto
the diagonal subalgebras of $C^*(S)$ and $C_r^*(S)$.

Our first findings about injectivity of a $*$-homomorphism on $C^*(S)$ are the
subject of Section~\ref{sec: diagonalsection}. We show in Theorem~\ref{thm:
using the diagonal} that injectivity can be phrased as a nonvanishing
condition involving projections from $\DD$, similar to \cite[Theorem 3.7]{LaRa}, when the semigroup $S$ has at most an identity element as unit,
or, in the presence of non-trivial units, satisfies a technical condition on the left action of $S^*$ on the space $\JJ(S)$. In Section~\ref{section:pisimple} we identify
a number of conditions on a right LCM semigroup $S$ which imply that $C^*(S)$
is purely infinite and simple. These conditions include a characterisation of
the left action of  $S^*$ on $\JJ(S)$
which is a refined version of effective action; we call this strongly
effective. In a short Section~\ref{section:injregular} we discuss injectivity of the
canonical surjection from $C^*(S)$ onto $C_r^*(S)$ and illustrate this with
semigroups of the form $\gxp$. Section~\ref{section:usingcore} initiates the
study of injectivity of $*$-homomorphisms on $C^*(S)$ phrased in terms of a
core subalgebra that is built from $\DD$ and the unitaries corresponding to
the group of units $S^*$ in $S$. The final section, Section~\ref{sec: examples}, is
devoted to applications. Here we discuss the validity of the properties of
right LCM semigroups introduced in sections~\ref{sec: diagonalsection} and \ref{section:pisimple}. The main class of
examples is that of semidirect products of the form $\gxp$, and via
Theorem~\ref{thm:gxp p.i. simple} we provide examples of purely infinite
simple $C^*(S)$ from this class. We also take the opportunity to examine the
Zappa-Sz\'ep product semigroups $X^*\bowtie G$ coming from self-similar
actions $(G,X)$ as considered in \cite{Law1, LW, BRRW}; in particular, we
examine some of the properties of semigroups introduced in the paper. While at this stage
we
cannot apply our $C^*$-algebraic results to this class of semigroups, we
plan to examine these problems in further work.

We thank the referee for suggesting many improvements to the presentation.

\section{Some results on semigroups}\label{sec: background}
\noindent By a semigroup $S$ we understand a non-empty set $S$ with an associative
operation. We refer to \cite{Cli-Pre} and \cite{Lal} for basic properties
of semigroups. Semigroups with an identity element for the operation are
known as monoids. Here we shall use the terminology semigroup, and specify
existence of an identity when this is the case.
All semigroups considered in this work are discrete. A semigroup $S$ is {\em
left cancellative} if $pq=pr$ implies $q=r$ for
all $p,q,r\in S$; {\em right cancellative} if $qp=rp$ implies $q=r$ for
all $p,q,r\in S$; and {\em cancellative} if it is both left and right
cancellative.

Given a semigroup $S$ with identity $1_S$, an element $x$ in $S$ is invertible
if there is $y\in S$ such that $xy=yx=1_S$. We denote by $S^*$ the group of
invertible elements of $S$ (also called the group of units of $S$). We shall
write $S^*\neq \emptyset$ in case the group of units is non-trivial (possibly
consisting only of the identity element), and we write $S^*=\emptyset$
otherwise.  If $S$ is cancellative and $x\in S^*$, then $x^{-1}$ will denote the inverse of $x$.

The  Green relations on a semigroup are well-known, see for example
\cite[Chapter 2]{Lal}. The left Green relation $\mathcal{L}$ is $a\mathcal{L}b$ if and only
if $Sa=Sb$ for $a,b\in S$. Likewise, the right Green relation $\mathcal{R}$ is  given by
$a\mathcal{R}b$ if and only if $aS=bS$ for $a, b\in S$. Suppose that $S$ is a
semigroup with $S^* \neq \emptyset$.  Since $Sx=S$ whenever $x\in S^*$, we see
that $a=xb$  for some $x\in S^*$ implies that $a\mathcal{L}b$. If $S$ is right
cancellative, the reverse implication holds and, moreover, the element $x$ in
$S^*$ is unique. Indeed, let $Sa=Sb$. Then there are $c,d\in S$ such that
$b=ca$ and $a=db$, so $b=cdb$ and $a=dca$. Thus right cancellation implies
$cd=1_S=dc$, showing that $c,d\in S^*$. If right cancellation is replaced with
left cancellation in the previous considerations, then $a\mathcal{R}b$ is the
same as $a=by$ for a unique $y\in S^*$.

If $S^*= \emptyset$, we will assume throughout this paper that $S$ has the following property: if $a,b \in S$ satisfy $aS = bS$, then $a=b$. This is what happens in the case that $S^* = \{1_S\}$.

 Given a semigroup $S$, a right ideal $R$ is a non-empty subset  of $S$ such
 that $RS\subseteq R$. The \emph{principal right ideals} of $S$ are all the
 right ideals of the form $pS:=\{ps\mid s \in S\}$ for $p\in S$. Given a principal right ideal $pS$, an element
 $r\in pS$ is called a \emph{right multiple} of $p$. The right ideal
 generated by $p\in S$ is defined as $\{p\}\cup pS$; we shall denote it
 $\langle p\rangle$.

 \begin{remark}\label{rmk:ideal-and-regular}
  If $S$ has an identity it is clear that $pS=\langle p\rangle$. For an arbitrary left cancellative semigroup $S$ and $p\in S$, a sufficient condition to have $pS=\langle p\rangle$ is that there is an idempotent $t\in S$, i.e. $t=tt$, such that $p=pt$. Note that if $p$ is a regular element of $S$, in the sense that there is $s\in S$ such that
$p=psp$, then $t=sp$ is an idempotent such that $p=pt$. Thus $p\in pS$ whenever $p$ is a regular element in a semigroup $S$.
\end{remark}

\begin{definition}\label{def:right-LCM-sgp}
A semigroup $S$ is {\em right LCM} if
it is left cancellative and every pair of elements $p$ and $q$ with a right
common multiple has a right least common multiple $r$.
\end{definition}

\noindent It is clear that a semigroup $S$ is right LCM if it is left cancellative and
for any $p, q$ in $S$, the intersection
of principal right ideals  $pS\cap qS$ is either empty or of the form $rS$ for
some $r\in S$. This property of semigroups is called {\em Clifford's condition}
in \cite[\S 4.1]{Law2} and \cite{No0}. In general, right least common multiples are not
unique: if $r$ is a right least common multiple of $p$ and $q$, then so is $rx$ for any
$x\in S^*$.

The quasi-lattice ordered groups treated in \cite{Ni} are examples of right LCM
semigroups with unique right least common multiples. We discuss
other  examples in Section~\ref{sec: examples}.
The main class of examples of semigroups that is considered in the present
work is that of semidirect product semigroups. We introduce next our
conventions for a semidirect product of semigroups.

For a semigroup $T$ we let $\End{T}$ denote the semigroup of all homomorphisms
$T\to T$. The identity endomorphism is $\id_T$. An action $P
\stackrel{\theta}{\curvearrowright} T$ of a semigroup $P$ on $T$ is a
homomorphism $\theta:P\to \End{T}$, i.e. $\theta_p\theta_q=\theta_{pq}$
for all $p,q\in P$.
If $T$ has an identity $1_T$, we shall require that
$\theta_p(1_T)=1_T$ for all $p\in P$. In case $P$ has an identity $1_P$, we shall further require that
$\theta_{1_P}$ is the identity endomorphism of $T$.

\begin{definition}\label{def:sdps} Let $T, P$ be semigroups and $P
\stackrel{\theta}{\curvearrowright} T$ an action.
The \emph{semidirect product} of $T$ by $P$ with respect to $\theta$, denoted
$T{\rtimes_\theta}P$, is the semigroup $T\times P$ with composition given by
\[
(s,p)(t,q) = (s\theta_{p}(t),pq),
\]
for $s,t\in T$ and $p, q\in P$.
\end{definition}

\noindent Examples of semidirect products are $ax+b$-semigroups, where $T$ comprises the
additive structure, and $P$ the multiplicative structure in some ring or
field. It is known that $T\rtimes_\theta P$ is right cancellative when $T$ and $P$ are both right
cancellative, and $T\rtimes_\theta P$ is left cancellative when $T$ and $P$
are both left cancellative and, in addition, $\theta$ is an action by
injective endomorphisms of $T$.

In the next result we describe $S^*$ in the case of a
semidirect product $S=\gxp$ in which $G$ is a group.

\begin{lemma}\label{lem: units in GxP}
Let $G$ be a group, $P$ a semigroup and $P
\stackrel{\theta}{\curvearrowright}G$ an action such that $\gxp$ is left
cancellative. If $P$ has an identity, then $(\gxp)^* = G\rtimes_{\theta}P^*$
holds, otherwise $\gxp$ does not have an identity.
\end{lemma}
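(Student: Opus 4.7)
The proof divides naturally into the two cases stated. The plan is to use the conventions established for $\theta$ (namely $\theta_p(1_G)=1_G$ for all $p\in P$ and $\theta_{1_P}=\id_G$ when $1_P$ exists) together with the fact that $G$ is a group, so that any endomorphism $\theta_p$ whose composition with another $\theta_q$ gives $\id_G$ is automatically an automorphism.

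\textbf{Case 1: $P$ has an identity $1_P$.} The first step is to verify that $(1_G,1_P)$ is an identity of $\gxp$: this follows directly from $\theta_{1_P}=\id_G$ and $\theta_p(1_G)=1_G$ for all $p\in P$. Next, I would take $(g,p)\in(\gxp)^*$ with two-sided inverse $(h,q)$ and unpack $(g,p)(h,q)=(1_G,1_P)=(h,q)(g,p)$ coordinate-wise. The second coordinate gives $pq=qp=1_P$, so $p\in P^*$. Conversely, given $p\in P^*$, I would propose the candidate inverse $(\theta_{p^{-1}}(g^{-1}),p^{-1})$ and check both products yield $(1_G,1_P)$ using $\theta_{p^{-1}}\theta_p=\theta_{p^{-1}p}=\theta_{1_P}=\id_G$ (and similarly $\theta_p\theta_{p^{-1}}=\id_G$). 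This shows that $\theta_p$ restricted to $P^*$ is actually an automorphism of $G$, so the semidirect product $G\rtimes_\theta P^*$ makes sense and coincides setwise with $(\gxp)^*$; the multiplication on both is induced from $\gxp$, so the identification is an isomorphism.

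\textbf{Case 2: $P$ has no identity.} I would argue by contradiction. Suppose $(g_0,p_0)$ is an identity element of $\gxp$. Applying the identity relation $(g_0,p_0)(h,q)=(h,q)$ and $(h,q)(g_0,p_0)=(h,q)$ for arbitrary $(h,q)\in\gxp$ and reading off the second coordinates yields $p_0q=q=qp_0$ for every $q\in P$. But this says $p_0$ is an identity for $P$, contradicting the hypothesis. Hence $\gxp$ has no identity.

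The only mildly delicate point is the verification in Case 1 that $\theta_p$ is invertible with inverse $\theta_{p^{-1}}$ when $p\in P^*$; this is where the convention $\theta_{1_P}=\id_G$ combined with homomorphism property of $\theta$ is essential. Once this is in hand, the computations are routine and the identification $(\gxp)^*=G\rtimes_\theta P^*$ follows. There is no real obstacle beyond keeping track of the two required conventions on $\theta$.
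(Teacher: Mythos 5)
Your proof is correct and follows essentially the same route as the paper's: identify $(1_G,1_P)$ as the identity, read off the second coordinate of the inverse equation to get $x\in P^*$, exhibit the explicit inverse $(\theta_{x^{-1}}(g^{-1}),x^{-1})$, and observe that an identity of $\gxp$ would force an identity in $P$. Your write-up merely fills in details the paper leaves implicit (checking both compositions, and that $\theta_x$ is an automorphism for $x\in P^*$).
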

\begin{proof}
If $P$ has an identity element $1_P$, the identity element of $\gxp$ is given
by $(1_G,1_P)$. Now let $(g,x) \in (\gxp)^*$. By definition, there is $(h,y)
\in \gxp$ such that $(g\theta_x(h),xy) = (g,x)(h,y) = (1_G,1_P)$. Thus, $x \in
P^*$. Conversely, if $x \in P^*$ and $g \in G$, the inverse of $(g,x)$ is
given by $(\theta_{x^{-1}}(g^{-1}),x^{-1})$. The second case is obvious.
\end{proof}

\begin{remark} Let $G$ be a group, $P$ a semigroup with $P^*=\{1_P\}$ and $P
\stackrel{\theta}{\curvearrowright}G$ an action such that $\gxp$ is left
cancellative. Given $(g,p)\in \gxp$, we have
$(g,p)(h,1_P)=(g\theta_p(h)g^{-1},1_P)(g,p)$ for any $h\in G$. By
Lemma~\ref{lem: units in GxP}, $a(\gxp)^*\subset (\gxp)^*a$ for any $a$ in
$\gxp$. This observation motivates the next considerations.
\end{remark}

\noindent In \cite[\S 10.3]{Cli-Pre}, a subset $H$ of a semigroup $S$ is called centric
if $aH=Ha$ for
every $a\in S$. For a semigroup $S$ with $S^*\neq\emptyset$, we
shall consider two one-sided versions of this condition.

\begin{definition} Given a semigroup $S$ with $S^*\neq\emptyset$,
let (C1) and (C2) be the conditions:
\begin{enumerate}
\item[(C1)]\label{def: centric} $aS^*\subseteq S^*a$ for all $a\in S$.
\item[(C2)]\label{def: centric-other} $S^*a\subseteq aS^*$ for all $a\in S$.
\end{enumerate}
\end{definition}

\begin{prop}\label{prop:congruence} Let $S$ be a semigroup with $S^* \neq
\emptyset$. Consider the equivalence relation on $S$ given as follows:  for
$a,b\in S$,
\[ a\sim b \text{ if }a=xb \text{ for some }x\in S^*.\]
If $S$ satisfies (C1), then $\sim$ is a congruence on $S$. Consequently, if
$\mathcal{S}:=S/_\sim$ denotes the collection of equivalence classes
$[a]:=\{b\in S\mid b\sim a\}$, then $\mathcal{S}$ is a semigroup with identity
$[1_S]$. Moreover, $\mathcal{S}^*=\{[1_S]\}$.
\end{prop}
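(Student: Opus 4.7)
The plan is to verify four things in sequence: that $\sim$ is an equivalence relation, that (C1) upgrades it to a congruence, that the quotient inherits a monoid structure with identity $[1_S]$, and finally that $[1_S]$ is the only invertible class.

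The first step is routine. Since $S^*\neq\emptyset$ forces $S$ to have an identity with $1_S\in S^*$, reflexivity comes from $a=1_S\cdot a$. For symmetry, if $a=xb$ with $x\in S^*$ then $b=x^{-1}a$. Transitivity follows from closure of $S^*$ under multiplication.

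The key step, where (C1) enters, is showing compatibility with the semigroup operation. Suppose $a\sim b$ and $c\sim d$, say $a=xb$ and $c=yd$ with $x,y\in S^*$. Then
\[
ac=(xb)(yd)=x(by)d.
\]
Applying (C1) to $b$, the element $by\in bS^*$ lies in $S^*b$, so $by=y'b$ for some $y'\in S^*$. Consequently $ac=(xy')(bd)$ with $xy'\in S^*$, hence $ac\sim bd$. This is the only step that uses (C1), and it is the main (though small) obstacle.

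With $\sim$ a congruence, one defines multiplication on $\mathcal{S}$ by $[a][b]:=[ab]$; associativity is inherited from $S$. The class $[1_S]$ acts as a two-sided identity. For the final claim $\mathcal{S}^*=\{[1_S]\}$, suppose $[a]\in \mathcal{S}^*$ with inverse $[b]$. Then $ab\sim 1_S$ and $ba\sim 1_S$, so there exist $u,v\in S^*$ with $ab=u$ and $ba=v$. Hence $a(bu^{-1})=1_S$ and $(v^{-1}b)a=1_S$, exhibiting $a$ as having both a left and a right inverse in the monoid $S$, which forces $a\in S^*$. But any $a\in S^*$ satisfies $a=a\cdot 1_S$ with $a\in S^*$, so $a\sim 1_S$ and therefore $[a]=[1_S]$.
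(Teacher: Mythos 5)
Your proof is correct and follows essentially the same route as the paper: (C1) is used in the same way to commute a unit past a semigroup element (you move $y$ past $b$ in the product $ac=x(by)d$, the paper moves $x$ past $c$ in $cad=cxbd$ — trivially equivalent formulations of compatibility), and the computation showing $\mathcal{S}^*=\{[1_S]\}$ via a left and a right inverse for $a$ is the paper's argument verbatim in substance.
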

\begin{proof}[Proof of Proposition~\ref{prop:congruence}.] It is routine to
check that $\sim$ is an equivalence relation. To show that it is a congruence on
$S$, we must show that whenever $a\sim b$ then $cad\sim cbd$ for all $c,d$ in
$S$. Let $x$ in $S^*$ such that $a=xb$.  By (C1), there is $x'\in S^*$ such
that $cx=x'c$. Then $cad=cxbd=x'cbd$, giving the claim. Thus $[a_1]\cdot
[a_2]:=[a_1a_2]$ for $a_1,a_2 \in S$ is a well-defined operation  which turns
$\mathcal{S}$ into a semigroup with identity $[1_S]$.

Suppose that $[a][b]=[1_S]=[b][a]$ for $a,b\in S$. Then $ab=x$ and $ba=y$ for
$x,y\in S^*$, which shows that
$bx^{-1}=y^{-1}b$ is an inverse for $a$. Similarly, $b\in S^*$, and thus
$[a]=[b]=[1_S]$.
\end{proof}

\begin{remark}
The relation $\sim$ from Proposition~\ref{prop:congruence} is closely related
to the left Green relation:  since $Sx=S$ whenever $x\in S^*$, we see that
$a\sim b$ implies $a\mathcal{L}b$. If $S$ is right cancellative, then also
$a\mathcal{L}b$ implies $a\sim b$.
\end{remark}

\noindent Our interest is in semigroups $S$ that are left cancellative and often
cancellative. So we would like to know when the semigroup $\mathcal{S}$ from
Proposition~\ref{prop:congruence} inherits these properties. One sufficient
condition for left cancellation to pass from $S$ to $\mathcal{S}$ is spelled
out in the next lemma, whose immediate proof we omit.

\begin{lemma}\label{lemma:calS-cancellative} Let $S$ be a semigroup with $S^*
\neq \emptyset$ and satisfying (C1). If $S$ is right cancellative then
$\mathcal{S}$ is right cancellative. Further, $\mathcal{S}$ is left
cancellative if $S$ is left cancellative and has the following property:
\begin{equation*}\label{eq:condC3}
 ab=xac \text{ for }a,b,c\in S, x\in S^* \Longrightarrow \exists
y\in S^* \text{ with }xa=ay.
\end{equation*}
\end{lemma}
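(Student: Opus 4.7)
The plan is to unwind each cancellation statement in $\mathcal{S}$ by lifting the relevant equality of classes back to an equality in $S$ modulated by a unit, and then use the appropriate cancellation hypothesis in $S$ together with condition (C1) (or its strengthening in the displayed hypothesis) to push a unit past an element of $S$ so that cancellation in $S$ becomes applicable.

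For right cancellativity of $\mathcal{S}$, I would suppose $[a][c]=[b][c]$, i.e.\ $[ac]=[bc]$. By definition of $\sim$, this means $ac=xbc$ for some $x\in S^*$. Right cancellativity of $S$ applied to the identity $ac=(xb)c$ yields $a=xb$, hence $a\sim b$ and $[a]=[b]$. No use of (C1) is actually needed here beyond the fact that $\sim$ is already known to be a congruence.

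For left cancellativity of $\mathcal{S}$, the starting point is $[a][b]=[a][c]$, i.e.\ $ab=xac$ for some $x\in S^*$. This is exactly the hypothesis of the displayed implication, so we obtain some $y\in S^*$ with $xa=ay$. Substituting gives $ab=ayc$, and left cancellativity of $S$ then produces $b=yc$, whence $b\sim c$ and $[b]=[c]$.

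Neither step should present a genuine obstacle; the role of the displayed condition is precisely to bypass the one place where (C1) alone would leave us stuck (namely, moving the unit $x$ in $xac$ past $a$ to the right so that cancellation of $a$ on the left becomes meaningful). The only thing to double-check while writing up is that the operation $[a_1]\cdot[a_2]:=[a_1a_2]$ from Proposition~\ref{prop:congruence} is already known to be well-defined, so that the equality $[ab]=[ac]\Rightarrow ab=xac$ with $x\in S^*$ can be invoked without further justification.
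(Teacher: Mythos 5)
Your proof is correct and is precisely the ``immediate proof'' that the paper omits: lift the equality of classes to an equality in $S$ twisted by a unit, then apply right cancellation directly (for right cancellativity) or use the displayed hypothesis to move the unit past $a$ before applying left cancellation. Your side remark is also accurate --- (C1) enters only through Proposition~\ref{prop:congruence}, which guarantees that $[a_1]\cdot[a_2]:=[a_1a_2]$ is well defined, so the reduction $[ac]=[bc]\Rightarrow ac=xbc$ needs no further justification.
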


\begin{prop}\label{prop:calS-for-GP} Let $P$ be a semigroup with $P^* \neq
\emptyset$, $G$ a group and $P \stackrel{\theta}{\curvearrowright} G$ an
action by injective group endomorphisms of $G$. Denote $S=\gxp$ the resulting semidirect product.

\textnormal{(a)} If $P$ satisfies (C1), then so does $S$.

\textnormal{(b)} If $P$ is right cancellative and satisfies (C1), then
$\mathcal{S}$ is right cancellative.

\textnormal{(c)} If $P$ is left cancellative and $P^*$ is centric, then $\mathcal{S}$
is left cancellative.
\end{prop}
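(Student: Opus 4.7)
The plan is to treat each part in turn, combining Lemma~\ref{lem: units in GxP} (which identifies $S^* = G \rtimes_\theta P^*$) with Lemma~\ref{lemma:calS-cancellative} for parts (b) and (c).

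For (a), given $a = (g,p) \in S$ and $u = (h,x) \in S^*$, so that $x \in P^*$, the product $au$ has second coordinate $px$, and (C1) for $P$ supplies $x' \in P^*$ with $px = x'p$. I would then read off the required left unit $u' = (k,x') \in G \rtimes_\theta P^* = S^*$ by solving the first-coordinate equation $k\theta_{x'}(g) = g\theta_p(h)$ for $k \in G$, which is possible because $G$ is a group (so $\theta_{x'}(g)$ is invertible).

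Part (b) reduces immediately: the semidirect product $S = G \rtimes_\theta P$ is right cancellative because $G$ (being a group) and $P$ are both right cancellative, and (C1) for $S$ was just established in (a), so the first assertion of Lemma~\ref{lemma:calS-cancellative} applies.

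Part (c) is the main business. Since $P^*$ centric implies (C1) for $P$, part (a) yields (C1) for $S$, and injectivity of the $\theta_p$ together with left cancellativity of $P$ makes $S$ left cancellative; thus I only need to verify the conditional implication in Lemma~\ref{lemma:calS-cancellative}. Given $ab = xac$ with $x \in S^*$, write $a=(g_a,p_a)$ and similarly for $b,c,x$. Equating second coordinates gives $p_a p_b = p_x p_a p_c$, and $P^*$ centric supplies $p_y \in P^*$ with $p_x p_a = p_a p_y$; left cancellation in $P$ then forces $p_b = p_y p_c$. Substituting $\theta_{p_x p_a} = \theta_{p_a}\theta_{p_y}$ into the first-coordinate equation and rearranging, I expect to arrive at
\[
\theta_{p_a}\bigl(g_b\,\theta_{p_y}(g_c)^{-1}\bigr) \;=\; g_a^{-1}\,g_x\,\theta_{p_x}(g_a).
\]
Setting $g_y := g_b\,\theta_{p_y}(g_c)^{-1}$ and $y := (g_y,p_y) \in S^*$, a direct calculation should then give $ay = xa$.

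The crucial obstacle, and the reason Lemma~\ref{lemma:calS-cancellative} is stated conditionally rather than as an unrestricted (C2), is that $\theta_{p_a}$ is injective but generally not surjective, so one cannot solve $ay = xa$ for arbitrary $x \in S^*$ and $a \in S$; the element $g_a^{-1}g_x\theta_{p_x}(g_a)$ need not lie in the image of $\theta_{p_a}$. The role of the hypothesis $ab = xac$ is precisely to manufacture a witness in $G$, namely $g_b\,\theta_{p_y}(g_c)^{-1}$, whose image under $\theta_{p_a}$ is the required group element; this is what makes the argument go through under $P^*$ centric without any surjectivity assumption on $\theta$.
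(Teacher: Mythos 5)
Your argument is correct and follows essentially the same route as the paper: part (a) via the same explicit conjugating unit (your $k = g\theta_p(h)\theta_{x'}(g)^{-1}$ is the paper's $g''$), part (b) by combining (a) with right cancellativity of $S$ and the first assertion of Lemma~\ref{lemma:calS-cancellative}, and part (c) by producing exactly the paper's witness $g_y = g_b\,\theta_{p_y}(g_c)^{-1}$ (the paper's $g_1 = h\theta_{p_1}(k^{-1})$) to verify the conditional hypothesis of Lemma~\ref{lemma:calS-cancellative}. No gaps.
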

\begin{proof} For (a), let $(g,p)\in S$ and $(g', x)\in S^*=G\rtimes P^*$,
according to Lemma~\ref{lem: units in GxP}. Choose by (C1) an element $y\in
P^*$ such that $px=yp$. It follows that
$(g, p)(g', x)=(g'', y)(g,p)$ for $g''=g\theta_p(g')\theta_y(g^{-1})$. For
assertion (b), note that $S$ has (C1) by (a) and is right cancellative, so the claim  follows by
Lemma~\ref{lemma:calS-cancellative}.

To prove (c), first note that $\mathcal{S}$ is well-defined since $S$ has
(C1).  Suppose we have elements $(g,p)$, $(h, q)$, $(k, r)$ in $S$ and
$(g_0,p_0)\in S^*$ such that $(g, p)(h,q)=(g_0, p_0)(g,p)(k, r)$. Therefore
$(g\theta_p(h), pq)=(g_0\theta_{p_0}(g\theta_p(k)),p_0pr)$. Since  $P^*$ is
centric,  there is a unique $p_1 \in P^*$ such that
$p_0p=pp_1$. Choosing $g_1=h\theta_{p_1}(k^{-1})$ in $G$ we have $(g_0,
p_0)(g,p)=(g,p)(g_1, p_1)$. Hence Lemma~\ref{lemma:calS-cancellative} applies
and shows that $\mathcal{S}$ is left cancellative.
\end{proof}

\noindent The next result shows that cancellative semigroups which are semidirect
products
of the form $\gxp$, with $P^*=\{1_P\}$, can be characterised abstractly as
cancellative semigroups
$S$ that satisfy (C1) and for which the quotient map of $S$ onto $\mathcal{S}$
admits a homomorphism lift.

\begin{prop}\label{prop: correspondence between S and GxP}
There is a bijective correspondence between the class of
cancellative semigroups $S$ with identity $1_S$ satisfying (C1) and such that the quotient map
from $S$ onto $\mathcal{S}$ admits a transversal homomorphism which embeds $\SS$ into $S$,
and the class of semidirect product semigroups $G\rtimes_\theta P$ arising
from a cancellative semigroup $P$ with $P^*=\{1_P\}$, which
acts by
injective endomorphisms of a group $G$.
\end{prop}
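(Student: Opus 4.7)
The plan is to describe explicit constructions in both directions and show they are mutually inverse. Starting from $S$ in the first class, set $G:=S^*$, identify $P$ with the image of $\SS$ under the given transversal homomorphism $\iota\colon\SS\hookrightarrow S$, and define $\theta\colon P\to\End G$ by letting $\theta_p(g)$ be the unique element of $S^*$ satisfying
\[
\iota(p)\,g\;=\;\theta_p(g)\,\iota(p).
\]
Existence follows from condition (C1) applied to $a=\iota(p)$, and uniqueness from right cancellation in $S$. In the opposite direction, from $\gxp$ in the second class one simply takes the ambient semigroup, noting that cancellativity, the two-sided identity $(1_G,1_P)$, property (C1) (via $(g,p)(h,1_P)=(g\theta_p(h)g^{-1},1_P)(g,p)$), the identification $[(g,p)]\mapsto p$ of $\SS$ with $P$, and the transversal homomorphism $p\mapsto(1_G,p)$ are all immediate, together with Lemma~\ref{lem: units in GxP}, which yields $(\gxp)^*=G\rtimes\{1_P\}\cong G$.

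The forward direction rests on verifying: (i) $\theta_p$ is a group endomorphism of $G$; multiplicativity $\theta_p(gh)=\theta_p(g)\theta_p(h)$ follows from associativity and right cancellation by $\iota(p)$, while $\theta_p(1_G)=1_G$ is forced by $\iota(p)\cdot 1_G=\iota(p)$, whence $\theta_p(g)\theta_p(g^{-1})=1_G$ ensures $\theta_p(g)\in G$; (ii) $\theta_p$ is injective by left cancellation in $S$; (iii) $p\mapsto\theta_p$ is a semigroup homomorphism, from $\iota(pq)=\iota(p)\iota(q)$ and uniqueness; (iv) $\theta_{1_P}=\id_G$, since $\iota(1_P)=1_S$. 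Further, $P\cong\SS$ is cancellative (inherited from $S$ through $\iota$) and satisfies $P^*=\{1_P\}$ by Proposition~\ref{prop:congruence}, so $(G,P,\theta)$ has the required form.

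Mutual inverseness is exhibited by the map $\Phi\colon\gxp\to S$ sending $(g,p)$ to $g\,\iota(p)$: the defining relation for $\theta$ makes $\Phi$ a homomorphism; surjectivity follows because each $s\in S$ satisfies $s\sim\iota([s])$ and hence can be written as $s=g\iota([s])$; and injectivity is obtained by first applying the congruence map to match $P$-coordinates, using that $\iota$ is a section, and then invoking right cancellation to match $G$-coordinates. Running the first construction on $\gxp$ conversely recovers $G$ as the units, $P$ as $\SS$, and the same action $\theta$, so the two passages are inverse up to canonical isomorphism.

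The main obstacle I anticipate is ensuring that $\theta_p$ actually lands in $G=S^*$ and is a \emph{group} endomorphism rather than a bare semigroup endomorphism; this is handled by the observation in the second paragraph but relies crucially on having both left and right cancellation in $S$. A parallel subtlety arises in proving $\Phi$ injective, where the transversal property of $\iota$ (so that $[\iota(p)]=p$ in $\SS$) is precisely what allows one to separate the $P$- and $G$-coordinates; this is why the transversal hypothesis appears explicitly in the statement.
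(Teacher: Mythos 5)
Your proposal is correct and follows essentially the same route as the paper: define $\theta_p(g)$ via the relation $\iota(p)g=\theta_p(g)\iota(p)$ (existence from (C1), uniqueness from right cancellation), check it is an action by injective endomorphisms of $S^*$, and show $(g,p)\mapsto g\,\iota(p)$ is an isomorphism, with the reverse direction handled by the transversal $p\mapsto(1_G,p)$ exactly as in the paper. The only cosmetic difference is that your worry about $\theta_p$ landing in $G$ is already settled by (C1) itself, which places $\iota(p)g$ in $S^*\iota(p)$ directly.
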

\begin{proof}
Suppose $S$ is cancellative with $1_S$, satisfies (C1), and is such that there is an embedding of
$\SS$ as a subsemigroup of $S$ which is a right inverse for the quotient map $S\to \mathcal{S}$. For ease of notation, we identify
$\SS\subseteq S$. Then
for each $p\in\SS$ we have a map $\theta_p:S^*\to S^*$, where
$\theta_p(x)$ is the unique element of $S^*$ satisfying $px=\theta_p(x)p$.
Note that such an element exists because of (C1), and is unique because $S$
is right cancellative. We claim that $\theta:p\mapsto \theta_p$ is an action
of $\SS$ by injective endomorphisms of $S^*$. For each $p\in \SS$ and $x,y\in
S^*$ we have $\theta_p(xy)p=pxy=\theta_p(x)py=\theta_p(x)\theta_p(y)p$, which
by right cancellation means $\theta_p(xy)=\theta_p(x)\theta_p(y)$. Since we
obviously have $\theta_p(1_S)=1_S$, each $\theta_p$ is an endomorphism of
$S^*$. For each $p,q\in\SS$ and $x\in S^*$ we have
$\theta_{pq}(x)pq=pqx=p\theta_q(x)q=\theta_p(\theta_q(x))pq$, which by right
cancellation means $\theta_{pq}(x)=\theta_p(\theta_q(x))$, and so $\theta$
is an action. Hence we can form the semidirect product $S^*\rtimes_\theta\SS$.
We have each $\theta_p$ injective because
$\theta_p(x)=\theta_p(y)$ implies $px=\theta_p(x)p=\theta_p(y)p=py$, resulting
in $x=y$.

The map $\phi:S^*\rtimes_\theta\SS\to S$ given by $\phi((x,p))=xp$ is a
homomorphism because
\[\phi((x,p))\phi((y,q))=xpyq=x\theta_p(y)pq=\phi((x,p)(y,q)).\]
For each $r\in
S$ we choose $p\in\SS$ the representative of $r$ in $\SS$. Then $r=xp$ for
some $x\in S^*$, which means $r=\phi((x,p))$, and hence $\phi$ is surjective.
For injectivity note that $\phi((x,p))=\phi((y,q))$ means $p$ and $q$ differ
by a unit. Hence as elements of $\SS$ they must be equal. Then right
cancellation gives $x=y$. So $\phi:S^*\rtimes_\theta\SS\to S$ is an
isomorphism. Moreover, $\SS$ is cancellative because $S$ is cancellative, and
we have $\SS^*=\{1_S\}$ because $[x]=[1_S]$ for all $x\in S^*$. Since
$\SS^*=\{1_S\}$, $\SS$ trivially satisfies (C1).

Now suppose that $P$ is cancellative with $P^*=\{1_P\}$, and acts by
injective endomorphisms on a group $G$. Then we know from the discussion on
semidirect products prior to Lemma~\ref{lem: units in GxP} that
$G\rtimes_\theta P$ is
cancellative. We also know from Proposition~\ref{prop:calS-for-GP} that
$G\rtimes_\theta P$ satisfies (C1). Denote by $\SS_{G,P}$ the semigroup
obtained by applying Proposition~\ref{prop:congruence} to $G\rtimes_\theta P$,
and consider the map $\pi:\SS_{G,P}\to G\rtimes_\theta P$ given by
$\pi([(g,p)])=(1_G,p)$. Since $(G\rtimes_\theta P)^*=G\times\{1_P\}$, the equality
$[(g,p)]=[(h,q)]$ implies $p=q$, which means $\pi([(g,p)])=\pi([(h,q)])$.
So $\pi$ is well defined. We have
\[\pi([(g,p)][(h,q)])=\pi([(g\theta_p(h),pq)])=pq=\pi([(g,p)])\pi([(h,q)])\]
for
each $[(g,p)],[(h,q)]\in\SS_{G,P}$, and so $\pi$ is a homomorphism. Moreover,
$\pi$ is obviously unital. Finally, for each $[(g,p)],[(h,q)]\in\SS_{G,P}$ we
have
\[\pi([(g,p)])=\pi([(h,q)])\Longrightarrow p=q,\]
so $(g,p)=(gh^{-1},1_P)(h,q)$, resulting in $[(g,p)]=[(h,q)]$.
Thus $\pi$ is injective, and hence a semigroup embedding in $G\rtimes_\theta
P$.
\end{proof}

\section{Right LCM semigroup C*-algebras}\label{sec: right LCM algebras}
\subsection{Semigroup C*-algebras}\label{subsec: semigroup algebras}~\\
In \cite{Li1}, Li constructed the reduced and the full $C^*$-algebras $C_r^*(S)$ and $C^*(S)$ associated to a left cancellative semigroup $S$ with identity. In this work we shall allow semigroups that do not necessarily have an identity, so we start by investigating to what extent the construction of $C_r^*(S)$ and $C^*(S)$ from \cite{Li1} still makes sense.

Let $S$ be a left cancellative semigroup, and let $\{\varepsilon_t\}_{t\in S}$ denote the canonical orthonormal basis of $\ell^2(S)$ such that $(\varepsilon_s| \varepsilon_t)=\delta_{s,t}$ for $s, t\in S$.  For each $p\in S$ let $V_p$ be the operator in  $\mathcal{L}(\ell^2(S))$ given by $V_p\varepsilon_t=\varepsilon_{pt}$ for all $t\in S$. We have $V_p^*V_p=I$ in $\mathcal{L}(\ell^2(S))$, so that $V_p$ is an isometry for every $p\in S$. We define the \emph{reduced} $C^*$-algebra $C_r^*(S)$ to be the unital $C^*$-subalgebra of $\mathcal{L}(\ell^2(S))$ generated by $V_p$ for all $p\in S$.

Given $p\in S$, clearly $V_pV_p^*\varepsilon_s=0$ when $s\notin pS$.  Left cancellation implies that $V_pV_p^*\varepsilon_s=\varepsilon_s$ when $s\in pS$. Thus the range projection $V_pV_p^*$ of $V_p$ is the orthogonal projection onto the subspace $l^2(pS)$ corresponding to the principal right ideal $pS$. We shall denote this projection by $E_{pS}$. With reference to Remark~\ref{rmk:ideal-and-regular}, note that $p$ need not belong to $pS$. However, $p$ is contained in $pS$ if $S$ has an identity or if $p$ is a regular element of $S$. We summarise some properties of the elements $V_p$ and $E_{pS}$ in the next lemma, whose proof we omit.

\begin{lemma}\label{lem:def-V-E-reduced-alg} Let $S$ be a left cancellative semigroup that does not
necessarily have an identity. Then for each $p$ in $S$, the range projection
of $V_p$ is equal to the orthogonal projection $E_{pS}$ onto the subspace
$l^2(pS)$. Further, the isometries $V_p$ and the projections $E_{pS}$ satisfy
the relations:
\begin{enumerate}
\item $V_pV_q=V_{pq}$;
\item $V_p E_{qS}V_p^*=E_{pqS}$;
\item $E_{pS}E_{qS}=E_{pS\cap qS}$
\end{enumerate}
for all $p,q\in S$.
\end{lemma}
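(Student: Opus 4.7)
The plan is to verify all three identities by direct computation on the orthonormal basis $\{\varepsilon_t\}_{t \in S}$ of $\ell^2(S)$, relying only on the definitions of $V_p$ and $E_{pS}$ and on left cancellation in $S$. The absence of an identity in $S$ plays no role here, because $E_{pS}$ is defined as the orthogonal projection onto $\ell^2(pS)$ regardless of whether $p$ itself lies in $pS$.

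First, for (1), I would compute
\[
V_p V_q \varepsilon_t = V_p \varepsilon_{qt} = \varepsilon_{pqt} = V_{pq}\varepsilon_t
\]
for every $t \in S$, which gives $V_p V_q = V_{pq}$. This also immediately implies, by taking adjoints, that $V_q^* V_p^* = V_{pq}^*$, which I will use in (2).

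For (2), the cleanest route is to observe that $E_{qS} = V_q V_q^*$ by the preceding remark (the range projection of $V_q$ equals $E_{qS}$), so that using (1) and its adjoint form,
\[
V_p E_{qS} V_p^* = V_p V_q V_q^* V_p^* = V_{pq} V_{pq}^* = E_{pqS}.
\]
If one prefers to be fully explicit, one can instead compute on basis vectors: $V_p^*\varepsilon_s$ is $\varepsilon_t$ when $s = pt$ (the element $t$ being unique by left cancellation) and $0$ otherwise; applying $E_{qS}$ keeps this only if $t \in qS$; and then $V_p$ maps $\varepsilon_t$ back to $\varepsilon_{pt} = \varepsilon_s$. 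The surviving vectors are exactly those with $s \in pqS$, which is the range of $E_{pqS}$.

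For (3), the key observation is that each $E_{pS}$ acts diagonally in the basis, sending $\varepsilon_s$ to $\varepsilon_s$ if $s \in pS$ and to $0$ otherwise. Consequently
\[
E_{pS} E_{qS} \varepsilon_s = \begin{cases} \varepsilon_s & \text{if } s \in pS \cap qS, \\ 0 & \text{otherwise,}\end{cases}
\]
which is exactly $E_{pS \cap qS}\varepsilon_s$. (If $pS \cap qS = \emptyset$ this is interpreted as the zero projection.) In particular the two projections commute automatically, so no ordering issue arises.

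There is really no substantive obstacle; the only point that requires mild care is keeping track of when a basis vector lies in the relevant principal right ideal, which is where left cancellation is invoked to guarantee uniqueness of the preimage under $V_p$. Everything else is a bookkeeping exercise.
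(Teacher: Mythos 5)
Your proof is correct; the paper in fact omits the proof of this lemma as routine, and your basis-vector computations (together with the identification $E_{pS}=V_pV_p^*$, which the paper establishes in the paragraph immediately preceding the lemma) are exactly the intended verification. No gaps.
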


\noindent Recall from \cite{Li1} that for each right ideal $X$ and $p\in S$, the sets
\[pX = \{px \mid x \in X\}\quad \text{and}\quad p^{-1} X = \{y\in S\mid py \in X\}\]
are also right ideals. Li \cite[\S2.1]{Li1} defines the set of {\em
constructible right ideals} $\JJ(S)$ to be the
smallest family of right ideals of $S$ satisfying
\begin{enumerate}
 \item[(1)] $\emptyset, S \in \JJ(S)$ and
 \item[(2)] $X \in \JJ(S),p \in S\Longrightarrow pX,p^{-1} X \in
 \JJ(S)$.
\end{enumerate}
An inductive argument as in the proof  of \cite[Lemma 3.3]{Li1} shows that (1) and (2) imply
\begin{enumerate}
 \item[(3)] $X, Y \in \JJ(S)\Longrightarrow X \cap Y \in \JJ(S)$.
\end{enumerate}

The full $C^*$-algebra for a left cancellative semigroup $S$ will be defined
in terms of generators and relations similar to what is done in \cite{Li1} for
semigroups with identity.

\begin{definition}\label{def: Li's full algebra}
Let $S$ be a left cancellative semigroup. The {\em full semigroup
$C^*$-algebra} $C^*(S)$ is the universal unital $C^*$-algebra generated by
isometries
$(v_p)_{p\in S}$ and projections\\
$(e_X)_{X\in\JJ(S)}$ satisfying
\begin{enumerate}
\item[(L1)] $v_pv_q=v_{pq}$;
\item[(L2)] $v_pe_Xv_p^*=e_{pX}$;
\item[(L3)] $e_\emptyset=0$ and $e_S=1$; and
\item[(L4)] $e_Xe_Y=e_{X\cap Y}$,
\end{enumerate}
for all $p,q\in S$, $X,Y\in\JJ(S)$.
\end{definition}

\noindent The \emph{left regular representation} is, by definition, the $*$-homomorphism  $\lambda:C^*(S)\to C_r^*(S)$ given by
$\lambda(v_p)=V_p$ for all $p\in S$.

In \cite{Li1}, the set of constructible right ideals $\JJ(S)$ is called {\em independent} if
for every choice of $X,X_1,\dots,X_n\in\JJ(S)$ we have
\[
X_j\varsubsetneqq X\text{ for all $1\le j\le n$}\Longrightarrow \bigcup_{j=1}^n
X_j\varsubsetneqq X.
\]
Equivalently, $\JJ(S)$ is independent if $\cup_{j=1}^n X_j= X$ implies $X_j=X$ for some $1\le j\le n$.

The next two lemmas explain why right LCM semigroups form a particularly tractable
class of semigroups. The proof of the first of these lemmas is left to the reader.

\begin{lemma}\label{lem: constr right ideals for right LCM sgp}
If $S$ is a right LCM semigroup, then $\JJ(S) = \{\emptyset, S\}\cup
\{pS\mid p \in S\}$.
\end{lemma}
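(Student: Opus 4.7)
The plan is to establish the equality by showing that the family $\FF:=\{\emptyset, S\}\cup\{pS:p\in S\}$ is itself closed under the two operations appearing in the definition of $\JJ(S)$; the inclusion $\FF\subseteq\JJ(S)$ is immediate since each $pS$ arises as $p\cdot S$ via operation (2), so once closure is verified the minimality of $\JJ(S)$ yields the reverse inclusion.

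Closure under the left-multiplication operation $X\mapsto pX$ is routine: $p\emptyset=\emptyset$, $pS\in\FF$ by definition, and $p(qS)=(pq)S\in\FF$ for every $q\in S$. For the preimage operation $X\mapsto p^{-1}X$ the cases $X=\emptyset$ and $X=S$ give $\emptyset$ and $S$ respectively, so the real content lies in computing $p^{-1}(qS)$. Observe that $y\in p^{-1}(qS)$ precisely when $py\in pS\cap qS$, since $py\in pS$ is automatic. If the intersection is empty, then $p^{-1}(qS)=\emptyset\in\FF$. Otherwise the right LCM hypothesis gives $pS\cap qS=rS$ for some right least common multiple $r$; since $r$ is in particular a right common multiple of $p$ and $q$, one has $r=pu$ for some $u\in S$. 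Then $py\in rS$ means $py=pus$ for some $s\in S$, and left cancellation in $S$ forces $y=us$; conversely every $y\in uS$ satisfies $py\in rS\subseteq qS$. Hence $p^{-1}(qS)=uS\in\FF$, completing the closure argument.

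The one mildly delicate point I anticipate is the step in which the right least common multiple $r$ is written as $r=pu$ with $u\in S$, i.e.\ the assertion that $r\in pS$; this is built into the definition of a right common multiple and in the non-unital setting is compatible with the standing convention of Section~\ref{sec: background} that $aS=bS$ implies $a=b$ whenever $S^*=\emptyset$. Once closure of $\FF$ under both operations is in hand, minimality of $\JJ(S)$ delivers the reverse inclusion and finishes the proof.
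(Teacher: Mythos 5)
The paper gives no proof of this lemma (it is explicitly left to the reader), so there is nothing to compare against line by line; your argument is certainly the intended one, and its overall structure -- show that the family $\{\emptyset,S\}\cup\{pS\mid p\in S\}$ contains $\emptyset$ and $S$, is closed under $X\mapsto pX$ and $X\mapsto p^{-1}X$, and invoke minimality of $\JJ(S)$ -- is correct. When $S$ has an identity the proof is complete as written: there the generator $r$ of $pS\cap qS$ satisfies $r=r1_S\in rS=pS\cap qS\subseteq pS$, so the factorisation $r=pu$ is automatic and $p^{-1}(qS)=uS$ follows exactly as you argue.

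The point you flag as ``mildly delicate'' is, however, a genuine gap when $S^*=\emptyset$, and the justification you offer does not close it. The standing convention that $aS=bS$ implies $a=b$ says nothing about whether $r\in pS$, and Definition~\ref{def:right-LCM-sgp} by itself does not even guarantee that $pS\cap qS$ equals $rS$ rather than $\{r\}\cup rS$ in the absence of an identity. The paper's own Example~\ref{ex:rightLCM no units} makes this concrete: in $S=\N^\times\setminus\{1\}$ one has $2S\cap 3S=\{6,12,18,\dots\}$, which is not of the form $rS$ for any $r\in S$ (one would need $2r=6$, i.e.\ $r=3$, but $9\in 3S\setminus 2S$), and correspondingly $2^{-1}(3S)=\{3,6,9,\dots\}$ is neither $\emptyset$, nor $S$, nor of the form $uS$ (every $uS$ has minimum $2u\geq 4$). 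So in the non-unital setting your computation $p^{-1}(qS)=uS$ breaks down, and indeed the statement of the lemma itself needs the stronger reading of ``right LCM'' that the paper uses elsewhere (e.g.\ in the proof of Lemma~\ref{lem: spanning family for algebra}): $pS\cap qS$ is empty or equals $rS$ for some $r\in pS\cap qS$, so that $r=pu=qu'$. You should either restrict to semigroups with identity, or state that convention explicitly as the hypothesis under which the preimage computation is carried out; as written, the appeal to the $aS=bS\Rightarrow a=b$ convention does not supply the factorisation $r=pu$.
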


\begin{lemma}\label{lem: independence for granted}
Let $S$ be a right LCM semigroup. Then $\bigcup_{X \in F}{X}
\subsetneqq S$ holds for all finite subsets $F \subset \JJ(S) \setminus \{S\}$ if and
only if $\JJ(S)$ is independent.
\end{lemma}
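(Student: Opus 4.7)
The plan is to establish both implications directly from the definitions. The direction ``independence $\Rightarrow$ no-cover'' is immediate: specialising the independence property to $X = S$ says that whenever $X_1, \dots, X_n \in \JJ(S)$ are all strictly contained in $S$, their union is strictly contained in $S$, which is exactly the no-cover statement.

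For the converse, assume the no-cover hypothesis and take $X_1,\dots,X_n,X \in \JJ(S)$ with $X_j \subsetneq X$ for every $j$. The case $X=\emptyset$ is vacuous and $X=S$ is the hypothesis itself, so by Lemma~\ref{lem: constr right ideals for right LCM sgp} the remaining case is $X = pS$ for some $p \in S$. My strategy is to reduce to the $X=S$ case by pulling back through $p$. Set $Y_j := p^{-1}X_j$, which lies in $\JJ(S)$ by the closure axiom (2). I claim $X_j = pY_j$ and $Y_j \subsetneq S$: the inclusion $pY_j \subseteq X_j$ is immediate from the definition of $p^{-1}X_j$, while for the reverse any $x \in X_j \subseteq pS$ has the form $x=ps$, and then $s \in Y_j$ by definition; the strict inclusion $Y_j \subsetneq S$ holds because $Y_j = S$ would force $X_j = pY_j = pS = X$. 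Applying the hypothesis to the finite subset $\{Y_1,\dots,Y_n\} \subset \JJ(S)\setminus\{S\}$, choose $s \in S \setminus \bigcup_j Y_j$. Then $ps \in X$, and if $ps \in X_j$ for some $j$ then $s \in Y_j$ by definition of $Y_j$, a contradiction. Hence $ps \in X \setminus \bigcup_j X_j$, so $\bigcup_j X_j \subsetneq X$.

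There is no real obstacle: the only place where the right LCM hypothesis is essential is Lemma~\ref{lem: constr right ideals for right LCM sgp}, which supplies the principal form $X = pS$ for any non-trivial constructible ideal and thereby enables the reduction to $X = S$. Left cancellation plays only an implicit role through the behaviour of the operation $p^{-1}(\cdot)$ on $\JJ(S)$.
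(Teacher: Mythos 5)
Your proof is correct, and its overall strategy---reducing the case $X=pS$ to the case $X=S$ by ``dividing out'' $p$---is the same as the paper's. The implementation differs in one respect: the paper writes each $X_i=p_iS\subsetneqq pS$, factors $p_i=pp_i'$ with $p_i'S\subsetneqq S$, and uses left cancellation to translate $\bigcup_i pp_i'S=pS$ into $\bigcup_i p_i'S=S$; you instead pass to the preimages $Y_j=p^{-1}X_j$, verify $X_j=pY_j$ and $Y_j\subsetneqq S$, and push a witness $s\notin\bigcup_j Y_j$ forward to $ps$. Your variant is marginally more robust: the factorisation $p_i=pp_i'$ tacitly uses $p_i\in p_iS\subseteq pS$, which, as Remark~\ref{rmk:ideal-and-regular} notes, can fail when $S$ has no identity and $p_i$ is not regular, whereas the preimage argument needs nothing beyond $X_j\subseteq pS$ (and it also absorbs the degenerate case $X_j=\emptyset$ without comment, since $p^{-1}\emptyset=\emptyset$). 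Both arguments use the right LCM hypothesis only through Lemma~\ref{lem: constr right ideals for right LCM sgp}.
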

\begin{proof}
Clearly, independence of $\JJ(S)$ implies $\bigcup_{X \in F}{X} \subsetneqq S$ for all finite $F \subset \JJ(S) \setminus \{S\}$. Conversely, let $X,X_{1},\dots,X_{n} \in \JJ(S)$ satisfy $X_i \subsetneqq X$. Since $S$ is right LCM, Lemma~\ref{lem: constr right ideals for right LCM sgp} gives $p,p_1,\dots,p_n \in S$ with $X = pS, X_i = p_iS$ for $i = 1,\dots,n$. For each $i=1,\dots, n$, $X_i \subsetneqq X$ implies that $p_i = pp_{i}'$ for some $p_{i}' \in S$ with $p_i'S\subsetneqq S$. Thus
\[\bigcup\limits_{1 \leq i \leq n}{X_{i}} = p\bigcup\limits_{1 \leq i \leq n}{p_{i}'S} \text{ and } X = pS.\]
By left cancellation, $\bigcup_{1 \leq i \leq n}{X_{i}} = X$ is equivalent to $\bigcup_{1 \leq i \leq n}{p_{i}'S} = S$. However, the second statement is false by the choice of $p_{i}'S$. Hence $\bigcup_{1 \leq i \leq n}{X_{i}} \subsetneqq X$ and $\JJ(S)$ is independent.
\end{proof}

\begin{remark}\label{rem: Q^e_F,emptyset non-zero}
Let $S$ be a left cancellative semigroup and $\JJ(S)$ the family of
constructible right ideals. Let $F$ be a finite subset of $\JJ(S) \setminus
\{S\}$. Note that if $S$ has an identity $1_S$, then $\bigcup\limits_{X \in
F}{X} \subsetneqq S$ holds. Indeed, if we had $\bigcup\limits_{X \in F}{X} =
S$, then there would exist $X \in F$ such that $1_S \in X$, so $X=S$ since $X$ is a right ideal, a
contradiction.
\end{remark}

\begin{cor}\label{cor: independence for monoids}
If $S$ is a right LCM semigroup with identity, then $\JJ(S)$ is independent.
\end{cor}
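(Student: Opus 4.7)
The plan is to combine Lemma~\ref{lem: independence for granted} with Remark~\ref{rem: Q^e_F,emptyset non-zero}, which together immediately yield the corollary. First I would recall that, by Lemma~\ref{lem: independence for granted}, for a right LCM semigroup $S$ the independence of $\JJ(S)$ is equivalent to the condition that $\bigcup_{X \in F} X \subsetneq S$ for every finite subset $F \subset \JJ(S) \setminus \{S\}$. Thus it suffices to verify this proper-containment condition under the additional hypothesis that $S$ has an identity $1_S$.

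Next I would invoke Remark~\ref{rem: Q^e_F,emptyset non-zero} to dispose of precisely this condition: for any finite $F \subset \JJ(S) \setminus \{S\}$, if we had $\bigcup_{X \in F} X = S$, then $1_S$ would lie in some $X \in F$; since $X$ is a right ideal, this would force $X = 1_S \cdot S \subseteq X$, hence $X = S$, contradicting $X \in \JJ(S)\setminus\{S\}$. Therefore $\bigcup_{X \in F} X \subsetneq S$ holds automatically, and Lemma~\ref{lem: independence for granted} delivers independence.

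There is no real obstacle here; the corollary is essentially a two-line application of the preceding lemma and remark. The only thing to be slightly careful about is that Remark~\ref{rem: Q^e_F,emptyset non-zero} applies to arbitrary left cancellative $S$ with identity, while independence in this generality is not automatic; what makes the argument close in the right LCM case is precisely Lemma~\ref{lem: independence for granted}, which converts the finite-union proper-containment property into full independence using Lemma~\ref{lem: constr right ideals for right LCM sgp}.
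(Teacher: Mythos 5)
Your proof is correct and is essentially the paper's own argument: the paper proves this corollary precisely by combining Lemma~\ref{lem: independence for granted} with Remark~\ref{rem: Q^e_F,emptyset non-zero} (it also notes the alternative of citing an external result of Norling). The only quibble is a typo in your write-up --- the containment should read $S = 1_S\cdot S \subseteq X$, not $X = 1_S\cdot S \subseteq X$ --- but the intended argument is exactly right.
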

\begin{proof}
This follows from \cite[Proposition 2.3.5]{No0}. Alternatively, apply Lemma~\ref{lem: independence for granted} and  Remark \ref{rem: Q^e_F,emptyset
non-zero}.
\end{proof}

\noindent If $S$ does not have an identity, we can always pass to its unitisation $\tilde{S} = S \cup \{1_S\}$, where we declare $1_S p = p = p 1_S$ for all $p \in \tilde{S}$.

\begin{lemma}\label{lem:right LCM for unitisation}
If $S$ is a right LCM semigroup with $S^*= \emptyset$, then for every $p,q\in S$ we have $pS \cap qS=\emptyset$ precisely when  $p\tilde{S} \cap q\tilde{S}=\emptyset$, and
\[pS \cap qS = rS \text{ if and only if } p\tilde{S} \cap q\tilde{S} = r\tilde{S}\]
for  $r \in S$. In particular, $\tilde{S}$ is right LCM and $\JJ(\tilde{S})$ is independent.
\end{lemma}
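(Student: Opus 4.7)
The plan is to prove the two equivalences in the lemma, from which the ``in particular'' conclusions follow easily: $\tilde{S}$ is right LCM because the case $1_S \in \{p,q\}$ is immediate and the remaining case $p,q \in S$ is covered by the equivalences, and then $\JJ(\tilde{S})$ is independent by Corollary~\ref{cor: independence for monoids} since $\tilde{S}$ has an identity. So I focus on the two equivalences.

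For the first equivalence, the inclusion $pS \subseteq p\tilde{S}$ (and likewise for $q$) gives one direction trivially. For the other, assuming $pS \cap qS = \emptyset$ and supposing for contradiction that some $x = pu = qv$ lies in $p\tilde{S} \cap q\tilde{S}$ with $u,v \in \tilde{S}$, a case split on whether each of $u,v$ equals $1_S$ or lies in $S$ yields a contradiction in every case: either $x \in pS \cap qS$ directly, or one of $p \in qS$, $q \in pS$, $p = q$ holds, which forces $pS \cap qS$ to contain the smaller of the two non-empty principal right ideals (using that $pS$ and $qS$ are non-empty because $S$ is non-empty).

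The substantive step is the forward direction of the second equivalence. Assume $pS \cap qS = rS$ with $r \in S$. The strategy is to locate $r$ inside $p\tilde{S} \cap q\tilde{S}$ via the constructible right ideal $A := p^{-1}(qS)$. Since $pS \cap qS \neq \emptyset$, $A$ is non-empty, and by Lemma~\ref{lem: constr right ideals for right LCM sgp} it equals either $S$ or $tS$ for some $t \in S$. In both situations $pA = pS \cap qS = rS$, so the standing assumption $aS = bS \Rightarrow a = b$ (valid because $S^* = \emptyset$) forces $r = p$ (if $A = S$) or $r = pt \in pS$ (if $A = tS$); thus $r \in p\tilde{S}$, and symmetrically $r \in q\tilde{S}$. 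Combined with $rS \subseteq pS \cap qS \subseteq p\tilde{S} \cap q\tilde{S}$, this gives $r\tilde{S} = rS \cup \{r\} \subseteq p\tilde{S} \cap q\tilde{S}$. For the reverse inclusion, an arbitrary $x = pu = qv \in p\tilde{S} \cap q\tilde{S}$ lands in $r\tilde{S}$ by case analysis on the four possibilities for $u,v$: when $u,v \in S$, $x \in pS \cap qS = rS$; in each mixed case one of $p \in qS$ or $q \in pS$ holds, so the uniqueness argument above forces $r$ to equal the relevant generator and $x = r$; and the case $u = v = 1_S$ gives $p = q$, so $r = p = q = x$.

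The backward direction of the second equivalence is formal: if $p\tilde{S} \cap q\tilde{S} = r\tilde{S}$, then $pS \cap qS$ is non-empty by the first equivalence, hence equals $r'S$ for some $r' \in S$ by the right LCM property of $S$; the forward direction then produces $r'\tilde{S} = r\tilde{S}$, and since $\tilde{S}$ is left cancellative with $\tilde{S}^* = \{1_S\}$ the property $a\tilde{S} = b\tilde{S} \Rightarrow a = b$ holds in $\tilde{S}$ (as noted for the case $S^* = \{1_S\}$ in Section~\ref{sec: background}), forcing $r = r'$. The main obstacle is the forward direction of the second equivalence: without invoking constructibility of $p^{-1}(qS)$ and Lemma~\ref{lem: constr right ideals for right LCM sgp} there seems to be no direct route to show that $r$ lies in $p\tilde{S}$ from $pS \cap qS = rS$ alone, since in a left cancellative semigroup without units $r$ need not belong to $pS$.
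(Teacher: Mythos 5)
Your proof is correct and follows essentially the same route as the paper's: both arguments verify the two displayed equivalences directly by a case analysis on whether the $\tilde{S}$-factors are $1_S$ or lie in $S$ (using the standing assumption $aS=bS\Rightarrow a=b$ to collapse the degenerate cases), and then deduce that $\tilde{S}$ is right LCM and that $\JJ(\tilde{S})$ is independent via Corollary~\ref{cor: independence for monoids}. The one place where you genuinely add something is the inclusion $r\tilde{S}\subseteq p\tilde{S}\cap q\tilde{S}$: the paper writes $rS\subset r\tilde{S}\subseteq p\tilde{S}\cap q\tilde{S}$ without comment, but this needs $r\in p\tilde{S}\cap q\tilde{S}$, which is not automatic from $pS\cap qS=rS$ alone, since in a semigroup without identity $r$ need not belong to $rS$ (equivalently, a generator of the ideal $pS\cap qS$ need not be a common multiple of $p$ and $q$). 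Your detour through the constructible ideal $p^{-1}(qS)$, Lemma~\ref{lem: constr right ideals for right LCM sgp}, and the identity $p\bigl(p^{-1}(qS)\bigr)=pS\cap qS$ pins down $r$ as either $p$ itself or an element of $pS$, which is exactly the missing justification; in this respect your write-up is more complete than the paper's. Your explicit treatment of the backward implication of the second equivalence (via uniqueness of generators of principal right ideals in the monoid $\tilde{S}$) is likewise a detail the paper leaves implicit, and it is correct.
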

\begin{proof}
Let $p,q \in S$. It is clear that $pS \cap qS$ is empty if and only if $p\tilde{S} \cap q\tilde{S}$ is. Suppose next that $pS \cap qS \neq \emptyset$.  In case $pS=qS$, the standing assumption imposed on semigroups without identity element  forces $p=q$, and so $p\tilde{S} = q\tilde{S}$. Assume therefore that $pS\neq qS$, and let $r\in S$ with $pS \cap qS = rS$. Then
\[pS \cap qS = rS \subset r\tilde{S} \subseteq p\tilde{S} \cap q\tilde{S}.\]
We claim that $p\tilde{S} \cap q\tilde{S} \subseteq r\tilde{S}$.   Let $t\in p\tilde{S} \cap q\tilde{S}$. If $t\in pS\cap qS$ then clearly $t\in rS\subset r\tilde{S}$. Assume that $t=q=ps$ for some $s\in S$. Then $t=q1_S\in q\tilde{S}$ and $t\in pS\subset p\tilde{S}$, so $t\in r\tilde{S}$. The case that $t=p=qu$ for some $u\in S$ is similar, and the claim is established.

Since left cancellation in $\tilde{S}$ is inherited from $S$, this shows that $\tilde{S}$ is a right LCM semigroup  Thus $\JJ(\tilde{S})$ is independent according to Corollary~\ref{cor: independence for monoids}.
\end{proof}

\noindent The following example shows that independence of $\JJ(S)$ need not hold in general for semigroups without an identity:

\begin{example}
Let $S = 2\N^\times \cup 3\N^\times$ be endowed with composition given by
multiplication. Then $\JJ(S)$ is not independent. Indeed, for
$X_1 = 2\N^\times = 3^{-1}(2S)$ and $X_2 = 3\N^\times = 2^{-1}(3S)$, we have
$X_i \subsetneqq S$ but $X_1 \cup X_2 = S$. We remark that $S$ is not right LCM.
\end{example}

\noindent One can modify the previous example to get a  right LCM semigroup with
$S^{*} = \emptyset$ such that $\JJ(S)$ is independent.

\begin{example}\label{ex:rightLCM no units}
Consider the set $S = \N^\times \setminus \{1\}$ with composition given by
multiplication. Then $S$ is a right LCM semigroup with $S^{*} = \emptyset$. We claim that $\JJ(S)$ is independent.
For this it suffices to show that $\bigcup_{X \in F}{X}
\subsetneqq S$ holds for all finite $F \subset \JJ(S) \setminus \{S\}$. Assume that $\bigcup_{i=1}^n{X_i}=S$ for $X_1,\dots ,X_n$ in $\JJ(S) \setminus \{S\}$. Since $S$ contains $n+1$ relatively prime elements $p_1, \dots ,p_{n+1}$, we can find $i_0\in \{1,\dots ,n\}$ and $j,k\in \{1,\dots, n+1\}$ with $j\neq k$ such that $p_j, p_k\in X_{i_0}$. But this implies that $X_{i_0}=S$, a contradiction. The underlying idea is that as long as there are infinitely many prime right ideals,
$\JJ(S)$ is independent.
\end{example}

\begin{remark}\label{rem: range proj of the gen isometries}
For a left cancellative semigroup $S$, the range projection $v_pv_p^*$ of the
generating isometry $v_p$ in $C^*(S)$ equals $e_{pS}$:
\[v_pv_p^* \stackrel{(L3)}{=} v_pe_Sv_p^* \stackrel{(L2)}{=} e_{pS}.\]
Thus, if $S$ has an identity, then $v_x$ is a unitary in $C^*(S)$ if (and only
if) $x \in S^*$. If $S$ is right LCM, then Lemma~\ref{lem: constr right ideals
for right LCM sgp} shows that $C^*(S)$ is generated already by $(v_p)_{p \in
S}$.\\
\end{remark}

\subsection{Spanning families and distinguished subalgebras.}~\\
When $S$ is a right LCM semigroup we have a  description of its
$C^*$-algebra $C^*(S)$ in terms of a spanning set of monomials of the kind that
span $C^*$-algebras associated to quasi-lattice ordered pairs, see
\cite{LaRa}. This assertion could be deduced from \cite[Proposition
3.2.15]{No0}, however we include a proof since we here do not assume that $S$
necessarily has an identity.

\begin{lemma}\label{lem: spanning family for algebra}
Let $S$ be a right LCM semigroup. If $S$ has an identity, then $C^*(S)=\clsp\{v_pv_q^* \mid p,q\in
S\}$. If $S^* = \emptyset$, then $C^*(S)=\clsp\{v_pv_q^* \mid p,q\in \tilde{S}\}$.
\end{lemma}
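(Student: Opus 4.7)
The plan is to show that $M := \clsp\{v_pv_q^* \mid p,q \in S\}$ (or with $\tilde{S}$ in place of $S$ when $S^* = \emptyset$, extending the family by setting $v_{1_S} := 1_{C^*(S)}$) is a $*$-subalgebra of $C^*(S)$ containing every generator. Together with the fact that $M$ is already closed, this will force $M = C^*(S)$.

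Closure of $M$ under $*$ is immediate from $(v_pv_q^*)^* = v_qv_p^*$, so the heart of the argument is closure under multiplication, which reduces to expressing $v_q^* v_r$ for $q,r \in S$ as an element of $M$. Using that each $v_p$ is an isometry and Remark~\ref{rem: range proj of the gen isometries} to identify $e_{pS} = v_pv_p^*$, I have
\[
v_q^*v_r = v_q^*(v_qv_q^*)(v_rv_r^*)v_r = v_q^* e_{qS} e_{rS} v_r = v_q^* e_{qS \cap rS} v_r,
\]
where the final equality is (L4). If $qS \cap rS = \emptyset$, then by (L3) this product is zero. Otherwise, Lemma~\ref{lem: constr right ideals for right LCM sgp} and the right LCM property produce $t \in S$ with $qS \cap rS = tS$, together with $a,b \in S$ (respectively $\tilde{S}$) satisfying $t = qa = rb$. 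Relation (L1) then gives $v_t = v_q v_a = v_r v_b$, so
\[
v_q^* v_r = v_q^* v_t v_t^* v_r = (v_q^* v_q) v_a v_b^* (v_r^* v_r) = v_a v_b^*.
\]
Consequently $(v_pv_q^*)(v_rv_s^*) = v_{pa} v_{sb}^*$ lies in $M$, establishing multiplicative closure.

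To conclude in the unital case, each generating isometry $v_p$ equals $v_p v_{1_S}^* \in M$, while by Lemma~\ref{lem: constr right ideals for right LCM sgp} every $e_X$ with $X \in \JJ(S)$ is either $0$, $1$, or of the form $e_{pS} = v_pv_p^*$, all of which lie in $M$. Hence $M$ contains the generators of $C^*(S)$, and the proof is complete. For the case $S^* = \emptyset$, I appeal to Lemma~\ref{lem:right LCM for unitisation}, which guarantees that the right LCM computation carried out above is still valid inside $\tilde{S}$; this is exactly the extra flexibility needed to cover the monomials $v_pv_q^*$ with $p$ or $q$ equal to the adjoined identity $1_S$.

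The main obstacle is conceptual rather than computational: it lies in the non-unital case, where one must make sense of the symbol $v_{1_S}$ (taken to be $1_{C^*(S)}$) and ensure that the right LCM structure survives the passage to $\tilde{S}$ so that the identity $v_q^* v_r = v_a v_b^*$ remains an identity among the original generators. This is precisely what Lemma~\ref{lem:right LCM for unitisation} provides.
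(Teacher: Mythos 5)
Your proposal is correct and follows essentially the same route as the paper's proof: both reduce multiplicative closure to the computation $v_q^*v_r = v_q^*e_{qS\cap rS}v_r$, which is $0$ when the ideals are disjoint and otherwise equals $v_av_b^*$ via a right LCM $t=qa=rb$, with the non-unital case handled through the unitisation $\tilde{S}$. Your explicit attention to why $a,b$ may need to be taken in $\tilde{S}$ is a welcome clarification of a point the paper treats tersely, but it is not a different argument.
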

\begin{proof}
In each case, the right-hand side is closed under taking adjoints and, due to
Remark~\ref{rem: range proj of the gen isometries}, contains the generators of
$C^*(S)$. Hence, we only need to show that the right-hand side is
multiplicatively closed. Using (L1), it suffices to show that the product of
$v_q^*$ and $v_p$ for arbitrary $p$ and $q$ in $S$ is $0$ or has the form
$v_{p'}v_{q'}^*$ for some $p',q' \in S$. By Remark~\ref{rem: range proj of the
gen isometries}, we have
\[v_q^*v_p = v_q^*e_{qS}e_{pS}v_p \stackrel{(L4)}{=} v_q^*e_{qS \cap pS}v_p.\]
Since $S$ is right LCM, we know that $pS \cap qS$ is either empty, in which
case $e_{qS \cap pS} = 0$ by (L3), or $pS \cap qS = rS$ for some $r \in pS
\cap qS$. If we let $p',q'\in S$ be such that $pp' = qq'=r$ in $S$ (which are
uniquely determined since $S$ is left cancellative), then
\[v_q^*v_p=v_q^*e_{rS}v_p=v_q^*v_{qq'}v_{pp'}^*v_p=v_{q'}v_{p'}^*\]
establishes the claim for the second case.
\end{proof}

\begin{definition} Let $S$ be a left cancellative semigroup. Define a
subalgebra of $C^*(S)$ by
\begin{equation*}
\DD:=C^*(\{e_{X}\mid X\in \JJ(S)\}).
\end{equation*}
If $S^*\neq \emptyset$, define further the subalgebras
\[
\CC_O:=C^*(\{v_pv_xv_p^*\mid p\in S,x\in S^*\})\text{ and }
\CC_I:=C^*(\{e_{pS},v_x\mid p\in S,x\in S^*\}).
\]
These are, respectively, the {\em diagonal}, the {\em outer core} and the {\em
inner core} of $C^*(S)$.
\end{definition}

\noindent It is clear that $\DD=\clsp\{e_X \mid X\in \JJ(S)\}$. The other two
subalgebras satisfy the following:

\begin{lemma}\label{lem: properties of the subalgebras}
Let $S$ be a right LCM semigroup with $S^*\neq \emptyset$. Then
\begin{enumerate}
\item[(i)] $\DD\subseteq\CC_I\subseteq\CC_O$;
\item[(ii)] $\CC_I=\clsp\{e_{pS}v_x \mid p\in S,~x\in S^* \}$; and
\item[(iii)] if $S^*=\{1_S\}$, then $\DD=\CC_I=\CC_O$. 
\end{enumerate}
\end{lemma}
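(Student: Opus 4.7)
For (i), the plan is to read off both containments from the definitions together with Lemma~\ref{lem: constr right ideals for right LCM sgp}. Since $\JJ(S) = \{\emptyset, S\} \cup \{pS : p \in S\}$, every generator $e_X$ of $\DD$ is one of $0$, $e_S = 1$, or $e_{pS}$, and each of these sits in $\CC_I$ by definition. For $\CC_I \subseteq \CC_O$, note that $S^* \neq \emptyset$ forces $1_S \in S$, so the identities
\[
e_{pS} = v_p v_p^* = v_p v_{1_S} v_p^* \qquad \text{and} \qquad v_x = v_{1_S} v_x v_{1_S}^*
\]
exhibit the generators of $\CC_I$ as generators of $\CC_O$.

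For (ii), let $A := \clsp\{e_{pS} v_x \mid p \in S,\, x \in S^*\}$. The inclusion $A \subseteq \CC_I$ is immediate. For the reverse inclusion, I will show that $A$ is a closed $*$-subalgebra of $C^*(S)$ containing the generators of $\CC_I$. The generators lie in $A$ since $e_{pS} = e_{pS} v_{1_S}$ and $v_x = e_S v_x$. Closure under the adjoint uses (L2) in the form $v_x e_{pS} = e_{xpS} v_x$, which rearranges to
\[
(e_{pS} v_x)^* = v_{x^{-1}} e_{pS} = e_{x^{-1}pS} v_{x^{-1}} \in A.
\]
For multiplicative closure, the same commutation and (L4) give
\[
(e_{pS} v_x)(e_{qS} v_y) = e_{pS} \, e_{xqS} \, v_{xy} = e_{pS \cap xqS} v_{xy}.
\]
Here the right LCM property intervenes: $pS \cap xqS$ is either empty, making the product zero, or equal to $rS$ for some $r \in S$, in which case the product is $e_{rS} v_{xy} \in A$.

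For (iii), specialise (ii) to $S^* = \{1_S\}$: since the only available unit is the identity, $\CC_I = \clsp\{e_{pS} v_{1_S} \mid p \in S\} = \clsp\{e_{pS} \mid p \in S\}$, which together with $e_S = 1$ and $e_\emptyset = 0$ is exactly $\DD$ by Lemma~\ref{lem: constr right ideals for right LCM sgp}. Likewise $\CC_O = C^*(\{v_p v_{1_S} v_p^* \mid p \in S\}) = C^*(\{e_{pS} \mid p \in S\}) = \DD$. Combined with (i) this forces $\DD = \CC_I = \CC_O$.

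The only genuinely nontrivial step is the multiplicative closure in (ii), and I expect that to be the main obstacle to writing down cleanly: it is where the right LCM hypothesis is essential, because without it $pS \cap xqS$ need not be principal and the product would fall outside the purported spanning set. The adjoint computation is the other place where care is needed, since one must commute a unit past a projection and recognise that the resulting projection is still indexed by a principal right ideal.
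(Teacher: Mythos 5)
Your proposal is correct and follows essentially the same route as the paper: the paper also establishes (ii) via the computation $e_{pS}v_xe_{qS}v_y = e_{pS\cap xqS}v_{xy}$ together with $(e_{pS}v_x)^* = e_{x^{-1}pS}v_{x^{-1}}$, and dismisses (i) and (iii) as immediate verifications, which you have merely spelled out. Your explicit remark that the right LCM property is what makes $e_{pS\cap xqS}$ land back in the spanning set is a worthwhile clarification of a step the paper leaves implicit.
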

\begin{proof}
Parts (i) and (iii) are immediate verifications. For assertion (ii) we use
(L2) and (L4) to get
\[
e_{pS}v_xe_{qS}v_y=e_{pS}v_xe_{qS}v_x^*v_xv_y=e_{pS\cap
xqS}v_{xy},
\]
for each $p,q\in S$, $x,y\in S^*$.
Hence $\{e_{pS}v_x \mid p\in S,~x\in S^*\}$ is closed under
multiplication. Since $(e_{pS}v_x)^*=
v_x^*e_{pS}=e_{x^{-1}pS}v_{x^{-1}}$, claim (ii) follows.
\end{proof}

\subsection{\texorpdfstring{Conditional expectations onto canonical diagonals of $C^*(S)$ and $C_r^*(S)$}{Conditional expectations onto canonical diagonals of full and reduced semigroup C*-algebra}}\label{subsection:cond-exp-diagonal}~\\
 Let $S$ be a left cancellative semigroup. The diagonal $\DD_r$ in $C_r^*(S)$
 is defined to be the subalgebra $\DD_r=\clsp\{E_X \mid X\in \JJ(S)\}$. We
 show next that when $S$ is right LCM and also {right cancellative},
 there is a canonical faithful conditional expectation from  $C_r^*(S)$ onto
 its diagonal. The result was motivated by  \cite[Lemma 3.11]{Li1}, and is a
 generalisation to cancellative right LCM semigroups of a similar result
 proved for quasi-lattice ordered groups, see \cite[Remark 3.6]{Ni} and
 \cite{QuiRa}. More precisely, it is a consequence of the normality of the
 coaction in \cite[Proposition 6.5]{QuiRa}  and of \cite[Lemma 6.7]{QuiRa}
 that the Wiener-Hopf algebra $\mathcal{T}(G, S)$, i.e. the reduced
 $C^*$-algebra of a quasi-lattice ordered group $(G, S)$, admits a  faithful
 conditional expectation onto its canonical diagonal.

\begin{prop}\label{prop: faithful cond exp for the red algebra}
If $S$ is a cancellative right LCM semigroup, then the canonical map
$\Phi_{\DD,r}:C^*_r(S) \longrightarrow \DD_r$ given by $\Phi_{\DD,
r}(V_pV_q^*)=\delta_{p,q}V_pV_p^*$ for $p,q\in S$ is a faithful conditional
expectation.
\end{prop}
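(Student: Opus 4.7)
The plan is to exploit the natural diagonal structure of $\ell^2(S)$. I would define
$$\Phi: \mathcal{L}(\ell^2(S)) \longrightarrow \mathcal{L}(\ell^2(S)),\qquad (\Phi(T)\varepsilon_s | \varepsilon_t) = \delta_{s,t}(T\varepsilon_s | \varepsilon_s),$$
the pointwise diagonal compression with respect to the canonical basis $\{\varepsilon_s\}_{s \in S}$. This is a standard faithful conditional expectation from $\mathcal{L}(\ell^2(S))$ onto the algebra of diagonal operators $\ell^\infty(S)$; faithfulness is immediate from positivity, since $(T\varepsilon_s | \varepsilon_s) = 0$ for all $s$ combined with $T \ge 0$ gives $T^{1/2}\varepsilon_s = 0$ for every $s$, hence $T = 0$. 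My strategy is to restrict $\Phi$ to $C_r^*(S)$ and verify that it both lands in $\DD_r$ and agrees with $\Phi_{\DD,r}$ on the spanning monomials.

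First I would compute $\Phi$ on the generating monomials. For $p, q \in S$ and $s \in S$, left cancellation gives $V_p V_q^*\varepsilon_s = \varepsilon_{ps'}$ when $s = qs'$ for some (unique) $s' \in S$, and $V_p V_q^*\varepsilon_s = 0$ otherwise. Hence $(V_p V_q^*\varepsilon_s | \varepsilon_s) = 1$ forces $s \in qS$ together with $ps' = qs' = s$, and a second application of left cancellation forces $p = q$. Therefore $\Phi(V_p V_q^*) = \delta_{p,q}\, E_{pS}$, which matches the formula prescribing $\Phi_{\DD,r}$, and in particular each value lies in $\DD_r$. Since $S$ is right LCM and cancellative, the argument of Lemma~\ref{lem: spanning family for algebra} transports verbatim to the reduced algebra, giving $C_r^*(S) = \clsp\{V_p V_q^* : p,q \in S\}$ (or with $\tilde{S}$ in place of $S$ when $S^* = \emptyset$, via Lemma~\ref{lem:right LCM for unitisation}). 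By continuity and linearity, $\Phi(C_r^*(S)) \subseteq \DD_r$, and so $\Phi_{\DD,r} := \Phi|_{C_r^*(S)}$ is a well-defined map with the prescribed values.

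The conditional expectation axioms now come for free. As a diagonal compression, $\Phi$ is completely positive, contractive, and fixes every diagonal operator; in particular it fixes each $E_X \in \DD_r$ and is a $\DD_r$-bimodule map. Restricting to $C_r^*(S)$ preserves all of these properties, and faithfulness of $\Phi_{\DD,r}$ on the positive cone of $C_r^*(S)$ is inherited directly from faithfulness of $\Phi$ on $\mathcal{L}(\ell^2(S))_+$.

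The main conceptual point (rather than a genuine obstacle) is ensuring that the image of $\Phi|_{C_r^*(S)}$ lies in $\DD_r$ rather than merely in the much larger algebra $\ell^\infty(S)$ of all bounded diagonals; this is exactly where the right LCM hypothesis enters, via the spanning description of $C_r^*(S)$ by monomials $V_p V_q^*$ whose diagonal compressions are always of the form $\delta_{p,q} E_{pS}$.
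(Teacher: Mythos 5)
Your proof is correct and follows essentially the same route as the paper: both restrict the standard faithful diagonal expectation $T \mapsto \Phi(T)$, $(\Phi(T)\varepsilon_s \mid \varepsilon_t) = \delta_{s,t}(T\varepsilon_s \mid \varepsilon_s)$, from $\mathcal{L}(\ell^2(S))$ onto $\ell^\infty(S)$ to $C_r^*(S)$, and then check on the spanning monomials $V_pV_q^*$ that the image actually lands in $\DD_r$. One small but substantive correction: the deduction of $p=q$ from $ps'=qs'$ uses \emph{right} cancellation, not ``a second application of left cancellation'' as you wrote --- this is precisely the point where the hypothesis that $S$ be cancellative, rather than merely left cancellative, enters the argument.
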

\begin{proof}
It was proved in \cite[Section 3.2]{Li1} that there is a faithful conditional
expectation $E:\mathcal{L}(\ell^2(S)) \longrightarrow \ell^\infty(S)$ characterised by
$\langle E(T)\varepsilon_s,\varepsilon_s\rangle = \langle T\varepsilon_s,\varepsilon_s\rangle$ for all $s \in
S$ and all $T \in \mathcal{L}(\ell^2(S))$. Clearly, $\DD_r \subset \ell^\infty(S)$. We
will show that the converse inclusion holds. Note that $C^*_r(S)$ is the closure
of the span of elements $V_pV_q^*, p,q \in S$. Therefore it suffices to show that
$E(V_pV_q^*) \in \DD_r$ for any $p, q\in S$. Let $s \in S$. If  $s \notin qS$,
then $V_q^*\varepsilon_s = 0$, and for $s\in qS$ of the form $s=qs'$ we have
$V_q^*\varepsilon_s = \varepsilon_s'$. Thus if $E(V_pV_q^*) \neq 0$, then there is $s' \in S$
such that $ps' = qs'$. Right cancellation then implies $p=q$, so $V_pV_q^* \in
\DD_r$. Since $\Phi_{\DD,r} = E$ in this case, the proposition follows.
\end{proof}

\noindent A successful strategy to prove injectivity of representations of $C^*(S)$
uses the classical idea of Cuntz from \cite{Cun}, which involves
expecting onto a diagonal subalgebra and constructing a projection with good
approximation properties. To pursue this path, we need a  faithful conditional
expectation from $C^*(S)$ onto  $\DD$. Such a map can be specified by its image on the spanning elements of $C^*(S)$
as follows:
\begin{equation}\label{def:Phi-D}
\Phi_{\DD} (v_pv_q^*)=\begin{cases}v_pv_p^*,&\text{ if }p=q\\
0,&\text{ if }p\neq q.\end{cases}
\end{equation}
Thus in examples we need to ensure that \eqref{def:Phi-D} does extend to
$C^*(S)$ and that it is faithful on positive elements. We now describe one
such situation.

Let us recall the notion of a semigroup crossed product by  endomorphisms, see
e.g. \cite{LaRa}. Let $S$ be a semigroup with identity and $A$ a unital
$C^*$-algebra with an action $S \stackrel{\alpha}{\curvearrowright}A$ by
endomorphisms. A nondegenerate representation of $(A,S,\alpha)$ in a  unital
$C^*$-algebra $B$ is given by a unital $*$-homomorphism $\pi_A: A
\longrightarrow B$ and a semigroup homomorphism $\pi_S: S \longrightarrow
\text{Isom}(B)$, where $\text{Isom}(B)$ denotes the semigroup of isometries in
the $C^*$-algebra $B$. The pair $(\pi_A,\pi_S)$ is said to be covariant if it
satisfies the covariance condition
\[
\pi_S(s)\pi_A(a)\pi_S(s)^* = \pi_A(\alpha_s(a)) \text{ for all } a \in A
\text{ and }s \in S.
\]
Assuming that there is a covariant pair, the semigroup crossed product $A
\rtimes_\alpha S$ is the unital $C^*$-algebra generated by a pair $(\iota_A,
\iota_S)$ which is universal for nondegenerate covariant representations. This
is to say that whenever $(\pi_A, \pi_S)$ is a nondegenerate covariant
representation of $(A,S,\alpha)$ in a $C^*$-algebra $B$, there is a
homomorphism $\overline{\pi}: A \rtimes_\alpha S \longrightarrow B$ such that
\[
\pi_A = \overline{\pi} \circ \iota_A \text{ and } \pi_S = \overline{\pi} \circ
\iota_S.
\]
The crossed product $A \rtimes_\alpha S$ is uniquely determined (up to canonical isomorphism) by
this property. If the action $\alpha$ is by injective endomorphisms, then
there is always a covariant pair and $A \rtimes_\alpha S$ is non-trivial, see
\cite{Lac}.

It was observed in \cite{Li1} that whenever $S$ is a left cancellative semigroup with identity, then there is an action $\tau$ of $S$ by endomorphisms of $\DD$ given by $\tau_p(e_{X})=v_pe_{X}v_p^* = e_{pX}$ for all $p\in S$ and $X\in \JJ(S)$. The semigroup crossed product $\DD\rtimes_\tau S$ is the universal $C^*$-algebra generated by a pair $(\iota_\DD, \iota_S)$ of homomorphisms of $\DD$ and $S$, respectively, subject to the covariance condition $\iota_S(p)\iota_\DD(e_{X})\iota_S(p)^*=\iota_\DD(e_{pX})$ for all $p\in S$ and $X\in \JJ(S)$. As shown in \cite[Lemma 2.14]{Li1}, the $C^*$-algebras $C^*(S)$ and $\DD\rtimes_\tau S$ are canonically isomorphic, through the isomorphism that sends $v_p$ to $\iota_S(p)$ and $e_{X}$ to $\iota_\DD(e_{X})$. We have the following consequence of  Lemma~\ref{lem: spanning family for algebra}.

\begin{cor} Given a right LCM semigroup $S$, let $\tau$ be the action of $S$
on $\DD$ given by conjugation with $v_p$ for $p\in S$. If $S$ has an identity, then
$\DD\rtimes_\tau S=\clsp\{\iota_S(p)\iota_S(q)^*\mid p,q \in S\}.$ If $S^*=\emptyset$, then $\DD\rtimes_\tau S=\clsp\{\iota_S(p)\iota_S(q)^*\mid p,q \in \tilde{S}\}$ holds.
\end{cor}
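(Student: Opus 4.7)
The claim is essentially a transport of Lemma~\ref{lem: spanning family for algebra} across the canonical isomorphism $C^*(S) \cong \DD\rtimes_\tau S$ established in \cite[Lemma 2.14]{Li1}, so the plan is to combine these two results and then unravel the resulting spanning sets.

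For the case where $S$ has an identity, I would proceed as follows. By \cite[Lemma 2.14]{Li1}, the assignment $v_p \mapsto \iota_S(p)$, $e_X \mapsto \iota_\DD(e_X)$ extends to a $*$-isomorphism $C^*(S) \to \DD\rtimes_\tau S$. Under this isomorphism the monomial $v_pv_q^*$ is sent to $\iota_S(p)\iota_S(q)^*$. Since Lemma~\ref{lem: spanning family for algebra} gives $C^*(S) = \clsp\{v_pv_q^* \mid p,q \in S\}$ in this case, applying the isomorphism (which is an isometric bijection) to both sides yields the desired spanning description.

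For the case $S^* = \emptyset$, Lemma~\ref{lem: spanning family for algebra} gives the description $C^*(S) = \clsp\{v_pv_q^* \mid p,q \in \tilde{S}\}$, where $v_{1_{\tilde S}}$ is to be read as the unit of the universal unital $C^*$-algebra $C^*(S)$. Here I would note that the crossed product $\DD\rtimes_\tau S$ is itself unital (its generating pair is $(\iota_\DD, \iota_S)$ with $\iota_\DD$ unital on $\DD$, which contains $e_S = 1$), so the canonical isomorphism is automatically unital. Accordingly I adopt the convention $\iota_S(1_{\tilde S}) := 1_{\DD\rtimes_\tau S}$, compatible with $v_{1_{\tilde S}} = 1_{C^*(S)}$, and transport the spanning set through the isomorphism as before.

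The only nuance worth flagging is the applicability of \cite[Lemma 2.14]{Li1} to semigroups without an identity. This is not really an obstacle: the covariance relation $\iota_S(p)\iota_\DD(e_X)\iota_S(p)^* = \iota_\DD(e_{pX})$ together with the semigroup homomorphism condition on $\iota_S$ makes sense for $p \in S$ whether or not $S$ has an identity, and the universal property on both sides furnishes mutually inverse $*$-homomorphisms between $C^*(S)$ and $\DD\rtimes_\tau S$ in exactly the same way. Once this is noted, the corollary follows without further work.
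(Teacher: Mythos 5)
Your proposal is correct and follows exactly the route the paper intends: the paper states this corollary without proof, presenting it as an immediate consequence of Lemma~\ref{lem: spanning family for algebra} transported through the canonical isomorphism $C^*(S)\cong\DD\rtimes_\tau S$ of \cite[Lemma 2.14]{Li1}, which is precisely your argument. Your additional remarks on the unit convention $\iota_S(1_{\tilde S})=1$ and on the applicability of the crossed-product identification when $S^*=\emptyset$ are sensible clarifications of points the paper leaves implicit.
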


\noindent Recall that a semigroup $S$ is said to be \emph{right reversible} if $Sp \cap Sq$ is non-empty for all $p,q \in S$, see \cite[§10.3]{Cli-Pre}. If $S$ embeds into a group, we refer to the subgroup generated by the image of $S$ as the \emph{enveloping group} of $S$. Note that this group is unique up to canonical isomorphism in case it exists.

\begin{prop}\label{prop:left-inverse} Let $S$ be a right LCM semigroup with identity such that $S$ is
right reversible and its enveloping group $\mathcal{G}=S^{-1}S$ is amenable.
Then there is a faithful conditional expectation from $C^*(S)$ onto $\DD$
characterised by \eqref{def:Phi-D}.
\end{prop}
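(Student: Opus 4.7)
The plan is to realise $\Phi_\DD$ as $(\id \otimes \tau) \circ \delta$, where $\delta$ is a coaction of $\mathcal{G}$ on $C^*(S)$ produced from the universal property, and $\tau$ is the canonical faithful trace on $C^*(\mathcal{G})$ afforded by amenability. First, right-reversibility together with left cancellation (built into ``right LCM'') yields, via Ore's construction, an injective semigroup homomorphism $\pi \colon S \hookrightarrow \mathcal{G} = S^{-1}S$; injectivity of $\pi$ traces back to left cancellation applied to the Ore equivalence. Let $\{u_g\}_{g \in \mathcal{G}}$ be the canonical unitaries of $C^*(\mathcal{G})$.

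Next, I would verify that the elements $\{v_p \otimes u_{\pi(p)} : p \in S\}$ and $\{e_X \otimes 1 : X \in \JJ(S)\}$ inside $C^*(S) \otimes C^*(\mathcal{G})$ satisfy the defining relations (L1)--(L4); the main points are that (L1) uses multiplicativity of $\pi$ and (L2) uses unitarity of $u_{\pi(p)}$, while (L3) and (L4) are immediate. The universal property of $C^*(S)$ then delivers a unital $*$-homomorphism $\delta \colon C^*(S) \to C^*(S) \otimes C^*(\mathcal{G})$ with $\delta(v_p) = v_p \otimes u_{\pi(p)}$ and $\delta(e_X) = e_X \otimes 1$. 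Amenability of $\mathcal{G}$ now enters twice. First, it guarantees that the trivial character $\epsilon \colon u_g \mapsto 1$ extends to a $*$-homomorphism $C^*(\mathcal{G}) \to \C$; since $(\id \otimes \epsilon) \circ \delta = \id_{C^*(S)}$ on the generators and hence globally, the coaction $\delta$ is injective. Second, amenability gives $C^*(\mathcal{G}) = C^*_r(\mathcal{G})$, so the canonical trace $\tau \colon u_g \mapsto \delta_{g,e}$ is a faithful tracial state.

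Set $\Phi_\DD := (\id \otimes \tau) \circ \delta$. On a spanning monomial from Lemma~\ref{lem: spanning family for algebra} we compute
\[
\Phi_\DD(v_p v_q^*) \;=\; \tau\bigl(u_{\pi(p)} u_{\pi(q)}^*\bigr)\, v_p v_q^* \;=\; \begin{cases} v_p v_p^*, & \pi(p) = \pi(q), \\ 0, & \text{otherwise,} \end{cases}
\]
which by injectivity of $\pi$ agrees with \eqref{def:Phi-D} and lands in $\DD$ since $v_p v_p^* = e_{pS}$ by Remark~\ref{rem: range proj of the gen isometries}. The map $\Phi_\DD$ is a contractive idempotent with range $\DD$ (fixing the $e_X$ by direct calculation), so by Tomiyama's theorem it is a conditional expectation onto $\DD$. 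For faithfulness on positives, the slice map $\id \otimes \tau$ is faithful on the positive cone of $C^*(S) \otimes C^*(\mathcal{G})$ because $\tau$ is a faithful state; combined with injectivity of $\delta$ this yields $\Phi_\DD(a^*a) = 0 \Rightarrow \delta(a^*a) = 0 \Rightarrow a = 0$.

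The step I expect to require the most care is the Ore-theoretic one: the standard theorem produces an embedding $S \hookrightarrow S^{-1}S$ under bilateral cancellation, whereas here only left cancellation is built into the hypotheses, so one must check that right-reversibility together with left cancellation suffices for the map $\pi$ to be well-defined and injective (which the Ore equivalence delivers). Everything after that is a standard amenable-coaction slicing argument.
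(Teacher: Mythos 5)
Your proof is correct, but it takes a different route from the paper's. The paper realises $C^*(S)$ as the semigroup crossed product $\DD\rtimes_\tau S$, observes that the action $\tau$ admits a left inverse $\beta_p(e_X)=e_{p^{-1}X}$ (so each $\tau_p$ is injective and $\DD$ embeds in the crossed product by Laca's dilation results), and then cites \cite[Proposition 3.1(1)]{Lar} to produce a coaction of $\mathcal{G}$ whose fixed-point algebra is $\iota_\DD(\DD)$, with faithfulness of the resulting expectation coming from normality of the coaction for amenable $\mathcal{G}$ (\cite[Lemma 1.4]{Qui} or \cite[Lemma 8.2.5]{CEL1}). You instead build the coaction $\delta(v_p)=v_p\otimes u_{\pi(p)}$ directly from the universal property and slice by the canonical trace --- which is essentially the paper's own technique from Proposition~\ref{prop:coaction} and Corollary~\ref{cor:phi-CI} in Section~\ref{section:usingcore}, transplanted to the diagonal. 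Your argument is more self-contained and elementary; the paper's route additionally records structural facts (the left-invertibility of $\tau$, the embedding of $\DD$ into the crossed product, the identification of the fixed-point algebra) that are reused elsewhere. Two small remarks. First, the trivial character on the full group $C^*$-algebra exists for \emph{any} group, so amenability really enters only once, in the faithfulness of the canonical trace $\tau$ (equivalently, normality of $\delta$); it is also worth noting that faithfulness of the slice map $\id\otimes\tau$ on positives uses that $\tau$ is a faithful \emph{tracial} state (via the modular conjugation in its GNS representation). Second, your worry about the Ore step is well placed but slightly misdiagnosed: left cancellation plus right-reversibility alone do \emph{not} guarantee an embedding into a group of left quotients --- two-sided cancellativity is needed --- however, the hypothesis that the enveloping group $\mathcal{G}=S^{-1}S$ exists already presupposes the embedding, so nothing is lost in context (and injectivity of $S\hookrightarrow\mathcal{G}$ is exactly what makes your formula for $\Phi_\DD(v_pv_q^*)$ agree with \eqref{def:Phi-D}).
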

\begin{proof}
 The first observation is that the action $\tau$ admits a left inverse,
 $\beta$, given by
\[\beta_p(e_X)=v_p^*e_{X}v_p = e_{p^{-1}X}\]
for $p\in S$ and $X \in \JJ(S)$. It was proved in \cite[Corollary 2.9]{Li2}
that $\beta_p$ defines an endomorphism of $\DD$ for each $p\in S$, the reason
for this being that $p^{-1}X \cap p^{-1}Y = p^{-1}(X \cap Y)$ holds for
all $X,Y \in \JJ(S)$.  It is clear that  $\beta$ is an action of $S$ such that
$\beta_p\circ \tau_p=\operatorname{id}$ for all $p\in S$.   Moreover,
\[
(\tau_p\circ \beta_p)(e_X)=v_pv_p^*e_Xv_pv_p^*=e_{pS}e_Xe_{pS}=e_X\tau_p(1)
\]
for every $p\in S$ and $X\in \JJ(S)$. Thus $\tau_p\circ \beta_p$ is simply
the cut-down to the corner associated to the projection $\tau_p(1)$.

One consequence of the existence of $\beta$ is that $\tau_p$ is injective
for every $p\in S$. Hence \cite[Theorems 2.1 and 2.4]{Lac} show that $\DD$
embeds in $\DD\rtimes_\tau S$.

 As a second consequence of the existence of $\beta$, note that
 \cite[Proposition 3.1(1)]{Lar} implies that there is a coaction of
 $\mathcal{G}$ whose fixed-point algebra is $\iota_\DD(\DD)$. Thus there is a
 conditional expectation $\Phi_\DD$ from $\DD\rtimes_\tau S$ onto
 $\iota_\DD(\DD)$ such that
\[
\Phi_\DD(\iota_S(p)\iota_S(q)^*)=\begin{cases} \iota_S(p)\iota_S(p)^*,&\text{
if }p=q\\
0,&\text{ if }p\neq q.
\end{cases}
\]
Identifying $\iota_S(p)$ with $v_p$ and $\iota_\DD(e_{pS})$ with $e_{pS}$
gives existence of the claimed expectation.
Under the assumption that the enveloping group $\mathcal{G}$ is amenable, the
map $\Phi_\DD$ is faithful on positive elements, cf. \cite[Lemma 1.4]{Qui}.
Note that the last conclusion may also be reached for the semigroup dynamical
system $(\DD, S, \tau)$ by invoking  \cite[Lemma 8.2.5]{CEL1}.
\end{proof}

\subsection{\texorpdfstring{From quasi-lattice order groups to right LCM semigroups.}{From quasi-lattice ordered groups to right LCM semigroups.}}\label{subsection-LR}~\\
It turns out that a good part of the general strategy of Laca and Raeburn
\cite{LaRa} for proving injectivity of representations of $C^*(S)$ in the case
that $S$ is part of a quasi-lattice order $(G, S)$ can be extended to the class of
right LCM semigroups, although the arguments become more delicate due to the presence of non-trivial units. The next several results make this claim precise.

\begin{notation}\label{notation: projections--D} In Lemma~\ref{lem:def-V-E-reduced-alg} we introduced isometries $V_p$ for $p\in S$ and projections $E_{pS}$ for $pS\in \JJ(S)$ in $C_r^*(S)$ that satisfy conditions (L1)-(L4). Later in the paper we shall mainly be interested in families of isometries and projections satisfying (L1)-(L4) inside an arbitrary $C^*$-algebra $B$. In order to avoid unnecessary notational adornment we shall still use $V_p, E_{pS}$ in that case.

Given a family of commuting projections $(E_{i})_{i \in I}$ in a unital $C^*$-algebra $B$ and finite subsets $A \subset F$ of $I$, we denote
\[Q_{F,A}^{E} := \prod\limits_{i \in A}E_{i}\prod_{j \in F \setminus A}(1-E_{j}).\]
If  the family is $(e_X)_{X\in \JJ(S)}$ in $C^*(S)$, we write $Q^e_{F, A}$ for the corresponding projections. In the case of a right LCM semigroup $S$, finite subsets of $\JJ(S)$ are determined by finite subsets of $S$, see Lemma~\ref{lem: constr right ideals for right LCM sgp}.
\end{notation}

\noindent If $S$ is a left cancellative semigroup with identity such that $\JJ(S)$ is independent, then \cite[Corollary 2.22]{Li1} and \cite[Proposition
2.24]{Li1} show that the left regular representation $\lambda$ from $C^*(S)$ to $C_r^*(S)$
restricts to an isomorphism from $\DD$ onto the diagonal $\DD_r$. This allows us to show:

\begin{lemma}\label{lem:lambda-iso-diagonal-general}
Let $S$ be a right LCM semigroup. Then the left regular representation $\lambda$ restricts to an isomorphism from the diagonal $\DD$ of $C^*(S)$ onto the diagonal $\DD_r$ of $C_r^*(S)$.
\end{lemma}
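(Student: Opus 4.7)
The plan is to reduce the lemma to Li's theorem for monoids in both cases. Observe first that $\lambda(e_X)=E_X$ for every $X\in\JJ(S)$, hence $\lambda$ automatically maps $\DD$ onto $\DD_r$, and only injectivity of $\lambda|_\DD$ requires work.

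If $S$ has an identity, then Corollary~\ref{cor: independence for monoids} supplies independence of $\JJ(S)$, and \cite[Corollary~2.22]{Li1} together with \cite[Proposition~2.24]{Li1} give injectivity of $\lambda|_\DD$ directly.

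If instead $S^*=\emptyset$, the plan is to pass to the unitisation $\tilde S$, which by Lemma~\ref{lem:right LCM for unitisation} is right LCM with identity, so the previous case applied to $\tilde S$ yields an isomorphism $\tilde\lambda|_{\tilde\DD}\colon\tilde\DD\to\tilde\DD_r$. The bijection $pS\leftrightarrow p\tilde S$ from Lemma~\ref{lem:right LCM for unitisation}, extended by $\emptyset\leftrightarrow\emptyset$ and $S\leftrightarrow\tilde S$, is an intersection-preserving bijection $\JJ(S)\to\JJ(\tilde S)$. I would use it to produce canonical $*$-isomorphisms $\DD\cong\tilde\DD$ and $\DD_r\cong\tilde\DD_r$ intertwining $\lambda$ and $\tilde\lambda$, whence injectivity on $\tilde\DD$ transfers to injectivity on $\DD$.

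The main obstacle is to justify these $*$-isomorphisms, for which my plan is to argue first that $\JJ(S)$ is itself independent. The argument runs by contradiction: should $\bigcup_{i=1}^n p_iS=S$ with each $p_iS\subsetneq S$, one discards redundant indices so that each $p_j$ lies in some $p_{\sigma(j)}S$ with $\sigma(j)\neq j$, and traversing a cycle of the induced map $\sigma$ produces $p\in S$ with $p\in pS$, contradicting left cancellativity of $\tilde S$ (implicit in Lemma~\ref{lem:right LCM for unitisation}). Given independence of $\JJ(S)$, the algebras $\DD$ and $\DD_r$ both acquire universal lattice presentations via \cite[Corollary~2.22]{Li1}, and the bijection $\JJ(S)\to\JJ(\tilde S)$ then identifies them with $\tilde\DD$ and $\tilde\DD_r$ compatibly with the left regular representations, completing the proof.
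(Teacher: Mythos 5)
Your proposal is correct and follows essentially the same route as the paper: handle the unital case via Corollary~\ref{cor: independence for monoids} together with \cite[Corollary 2.22]{Li1} and \cite[Proposition 2.24]{Li1}, and in the case $S^*=\emptyset$ pass to $\tilde S$ and transfer injectivity through the isomorphisms $\DD\cong\tilde\DD$ and $\DD_r\cong\tilde\DD_r$ induced by the ideal correspondence of Lemma~\ref{lem:right LCM for unitisation}. Your extra cycle argument showing $\bigcup_{i}p_iS\subsetneqq S$ (hence independence of $\JJ(S)$ itself, via the fact that $p\in pS$ would contradict left cancellation in $\tilde S$) is a sound and welcome justification of the two transfer isomorphisms, which the paper asserts without detail.
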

\begin{proof}
If $S$ has an identity, then $\JJ(S)$ is independent by Corollary~\ref{cor: independence for monoids}. Hence the lemma is  simply  an application of the mentioned results from \cite{Li1}. Now suppose $S^*= \emptyset$ holds. Then $\JJ(\tilde{S})$ is independent according to Lemma~\ref{lem:right LCM for unitisation} and Corollary~\ref{cor: independence for monoids}. Moreover, by Lemma~\ref{lem:right LCM for unitisation}, we have
\[pS \cap qS = rS \text{ if and only if } p\tilde{S} \cap q\tilde{S} = r\tilde{S} \text{ for all } p,q,r \in S.
\]
This fact and the standing hypothesis $S \neq \emptyset$ imply that the maps
\[\begin{array}{lclclcl}
\DD &\longrightarrow& \tilde{\DD} &\hspace*{4mm}\text{and}\hspace*{4mm}& \DD_r &\longrightarrow& \tilde{\DD}_r\\
e_S &\mapsto& e_{\tilde{S}}&&E_S &\mapsto& E_{\tilde{S}}\\
e_{pS} &\mapsto& e_{p\tilde{S}}&&E_{pS} &\mapsto& E_{p\tilde{S}}
\end{array}\]
are isomorphisms, where $\tilde{\DD}$ and $\tilde{\DD}_r$ denote the diagonal subalgebra of $C^*(\tilde{S})$ and $C^*_r(\tilde{S})$, respectively. Since $\JJ(\tilde{S})$ is independent, $\tilde{\lambda}:\tilde{\DD} \longrightarrow \tilde{\DD}_r$ is an isomorphism. Altogether, we get a commutative diagram
\begin{equation}\label{com:diagram DD unitise S}
\begin{tikzpicture}[>=stealth, xscale=2, yscale=1.5]
\node (c) at (0,-2) {$\tilde{\DD}$};
\node (b) at (2,0) {$\DD_r$};
\node (a) at (0,0) {$\DD$}
edge[->](b)
edge[->] (c);
\node (d) at (2,-2) {$\tilde{\DD}_r$}
edge[<-] (b)
edge[<-] (c);
\node at (1,0.2) {$\lambda|_{\DD}$};
\node at (-0.2,-1) {$\cong$};
\node at (2.2,-1) {$\cong$};
\node at (1,-2.2) {$\cong$};
\end{tikzpicture}
\end{equation}
which proves that $\lambda|_{\DD}$ is an isomorphism.
\end{proof}

\begin{prop}\label{prop: part (i) independent}
Suppose $S$ is a right LCM semigroup and $\pi$ is a $*$-homomorphism of
$C^*(S)$. Let $E_X:=\pi(e_X)$ for $X\in \JJ(S)$ and
$V_p:=\pi(v_p)$ for $p\in S$. Then the following statements are equivalent:
\begin{enumerate}[(I)]
\item\label{it:I1}$\pi\vert_\DD:\DD\longrightarrow \pi(\DD)$ is an isomorphism.
\item\label{it:I2} $Q_{F,A}^{E} \neq 0$
for all non-empty finite subsets $F$ of $\JJ(S)$ and all non-empty subsets
$A\subset F$ satisfying
\[
\bigcap\limits_{X \in A}{X} \cap \bigcap\limits_{Y \in F \setminus A}{S
\setminus Y} \neq \varnothing.
\]
\item\label{it:I3} $Q_{F, \emptyset}^E\neq 0$ for all non-empty subsets
$F\subset \JJ(S)\setminus\{S\}$.
\end{enumerate}
\end{prop}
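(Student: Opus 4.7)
I will prove the equivalence by the cycle (I) $\Rightarrow$ (II) $\Rightarrow$ (III) $\Rightarrow$ (I). The backbone is Lemma~\ref{lem:lambda-iso-diagonal-general}, which identifies $\DD$ with the diagonal $\DD_r \subset \ell^\infty(S) \subset \mathcal{L}(\ell^2(S))$ via the left regular representation. Under this identification, $Q^e_{F,A}$ is the orthogonal projection onto $\ell^2\bigl(\bigcap_{X\in A}X \cap \bigcap_{Y \in F\setminus A}(S\setminus Y)\bigr)$, so $Q^e_{F,A} \neq 0$ in $\DD$ precisely when this set is non-empty. Throughout I assume $\pi$ is unital (else pass to the corner $\pi(1)B\pi(1)$).

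\textbf{(I) $\Rightarrow$ (II).} If the intersection in the hypothesis of (II) is non-empty, then $Q^e_{F,A} \neq 0$ by the observation above, and since $\pi|_\DD$ is an isomorphism onto its image, $Q^E_{F,A} = \pi(Q^e_{F,A}) \neq 0$.

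\textbf{(II) $\Rightarrow$ (III).} Given a non-empty finite $F \subset \JJ(S) \setminus \{S\}$, set $F' := F \cup \{S\}$ and $A := \{S\}$. The set-theoretic hypothesis of (II) becomes $S \setminus \bigcup_{Y\in F}Y \neq \varnothing$, which holds by independence of $\JJ(S)$: apply Corollary~\ref{cor: independence for monoids} when $S$ has an identity, and otherwise invoke Lemma~\ref{lem:right LCM for unitisation} together with the identification $\DD \cong \tilde{\DD}$ used in the proof of Lemma~\ref{lem:lambda-iso-diagonal-general}. Since $E_S = 1$, we have $Q^E_{F',\{S\}} = Q^E_{F,\emptyset}$, and (II) delivers (III).

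\textbf{(III) $\Rightarrow$ (I).} Since the elements $Q^e_{F,A}$ span $\DD$ and index the basic clopen sets of its Gelfand spectrum, it suffices to show $\pi(Q^e_{F,A}) \neq 0$ whenever $Q^e_{F,A} \neq 0$. Assume the intersection is non-empty and pick $s$ in it. If $A = \varnothing$, then no $X \in F$ equals $S$, so $F \subset \JJ(S) \setminus \{S\}$ and (III) applies directly. If $A \neq \varnothing$, then for each $X = pS \in A$, $s \in X$ forces $sS \subseteq X$; for each $Y = qS \in F\setminus A$, $s \notin Y$ combined with the right LCM property forces $sS \cap Y \subsetneqq sS$, with either $sS\cap Y = \varnothing$ or $sS\cap Y = s s_Y' S$ for some $s_Y' \in S$ with $s_Y'S \subsetneqq S$ (otherwise $sS\cap Y = sS$, a contradiction). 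Using (L2) and (L4) one computes
\[
e_{sS}\, Q^e_{F,A} = e_{sS}\prod_{Y\in F\setminus A}(e_{sS} - e_{sS\cap Y}).
\]
Applying $\pi$ and conjugating by $V_s$ on the right and $V_s^*$ on the left (using $V_s^*V_s = 1$ and $V_s^*E_{sT}V_s = E_T$ for $T \in \JJ(S)$) turns the right-hand side into $\prod_{Y}(1 - E_{s_Y'S})$, indexed over those $Y \in F\setminus A$ for which $sS\cap Y \neq \varnothing$. This is either the empty product (equal to $1\neq 0$) or $Q^E_{F'',\varnothing}$ for some non-empty $F'' \subset \JJ(S)\setminus\{S\}$, and in either case is non-zero by (III). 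Hence $E_{sS}\pi(Q^e_{F,A}) \neq 0$, so $\pi(Q^e_{F,A}) \neq 0$.

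\textbf{Anticipated obstacle.} The delicate step is the $A\neq\varnothing$ case of (III) $\Rightarrow$ (I): one must verify that the factors $s_Y'S$ are proper ideals so that (III) is applicable, and when $S^* = \varnothing$ one must handle carefully the fact that $s$ need not lie in $sS$, appealing to Lemma~\ref{lem:right LCM for unitisation} to obtain the LCM data $s s_Y'S = sS\cap Y$ and the independence of $\JJ(S)$ needed for the computation.
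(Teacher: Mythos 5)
Your cycle (I)\,$\Rightarrow$\,(II)\,$\Rightarrow$\,(III)\,$\Rightarrow$\,(I) is a reasonable reorganisation of the paper's argument (the paper instead gets (I)\,$\Leftrightarrow$\,(II) from \cite[Lemma 2.20]{Li1}, proves (I)\,$\Rightarrow$\,(III) directly, and then (III)\,$\Rightarrow$\,(II)), and your key computation --- cutting down $Q^e_{F,A}$ by a range projection and conjugating by the corresponding isometry to land in a product of the form $\prod(1-E_{q_YS})$ with $q_YS\subsetneqq S$, to which (III) applies --- is exactly the paper's. Your (II)\,$\Rightarrow$\,(III) via $F'=F\cup\{S\}$, $A=\{S\}$ is a nice shortcut not in the paper. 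In the case where $S$ has an identity your argument is complete and correct.

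There is, however, a genuine gap in (III)\,$\Rightarrow$\,(I) when $S^*=\emptyset$, precisely at the point you flag. You cut down by $e_{sS}$ for a point $s$ of the witnessing set, and you need $sS\cap Y\subsetneqq sS$ (equivalently $s'_YS\subsetneqq S$) for each $Y\in F\setminus A$; your justification is that $sS\cap Y=sS$ would contradict $s\notin Y$. Without an identity this implication fails, because $s$ need not lie in $sS$: one can have $Y=qS$ with $s\notin Y$ yet $sS\subseteq Y$ (already $Y=sS$ with $s\notin sS$ does the job, and then $e_{sS}(1-e_Y)=0$, so your cut-down $e_{sS}Q^e_{F,A}$ vanishes even though $Q^e_{F,A}$ need not). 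Appealing to Lemma~\ref{lem:right LCM for unitisation} does not repair this, since the problem is the choice of projection, not the existence of LCM data. The paper's fix is to cut down by $e_{\sigma_AS}$ where $\sigma_AS=\bigcap_{X\in A}X$: this is a \emph{lossless} cut-down ($e_{\sigma_AS}Q^e_{F,A}=Q^e_{F,A}$), and the hypothesis of (II) gives an element of $\sigma_AS$ outside each $Y$, hence $\sigma_AS\not\subseteq Y$ and $\sigma_A^{-1}Y\subsetneqq S$ directly, with no appeal to $\sigma_A\in\sigma_AS$. Replacing your $s$ by such a $\sigma_A$ (and noting $e_{\sigma_AS}=\prod_{X\in A}e_X$) closes the gap and recovers the paper's proof. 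A second, smaller point: in your (II)\,$\Rightarrow$\,(III) for $S^*=\emptyset$ you need $S\setminus\bigcup_{Y\in F}Y\neq\varnothing$ as a statement about $S$ itself; this does follow, but only by pushing $Q^e_{F,\emptyset}\neq 0$ through the isomorphisms $\DD\cong\tilde{\DD}\cong\tilde{\DD}_r$ and back to $\DD_r$, so you should say so rather than cite independence of $\JJ(\tilde S)$ alone.
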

\begin{proof}
Lemma~\ref{lem:lambda-iso-diagonal-general} implies
that the left regular representation $\lambda$
restricts to an isomorphism from $\DD$ onto $\DD_r$. Thus assuming \emph{(I)} and letting
$A\subset F$ be finite non-empty subsets of $\JJ(S)$ satisfying the non-empty
intersection condition of \emph{(II)}, it follows that $\lambda(Q^e_{F,
A})\neq 0$. Hence $Q^e_{F, A}\neq 0$, which by injectivity of $\pi\vert_\DD$ gives
$Q^E_{F, A}\neq 0$. This shows that \emph{(I)} implies \emph{(II)}. Conversely, it suffices to note that by \cite[Lemma 2.20]{Li1},
condition $(I)$ is equivalent to the implication $Q_{F,A}^{E} = 0
\Longrightarrow Q_{F,A}^{e} = 0$ for all non-empty finite subsets $F$ of
$\JJ(S)$ and all non-empty subsets $A \subset F$. Thus \emph{(I)} and  \emph{(II)} are equivalent.

Consider next a non-empty finite subset $F\subset \JJ(S)\setminus\{S\}$. If $S$ has an identity, then Lemma~\ref{lem: independence for granted} provides independence of $\JJ(S)$. In particular, we have $\bigcup_{X \in F}{X} \subsetneqq S$. Hence $Q_{F,\emptyset}^{e} \neq 0$ because its image under $\lambda$ is non-zero. In case $S^*=\emptyset$, Lemma~\ref{lem:right LCM for unitisation} shows that $F \subset \JJ(S)\setminus\{S\}$ corresponds to a finite subset $\tilde{F} \subset \JJ(\tilde{S})\setminus\{\tilde{S}\}$. As $\tilde{S}$ is a right LCM semigroup with identity, we get $Q_{\tilde{F},\emptyset}^{e} \neq 0$. According to Lemma~\ref{lem:lambda-iso-diagonal-general}, this is equivalent to $Q_{F,\emptyset}^{e} \neq 0$. Since $\pi$ carries $Q_{F,\emptyset}^e$ to $Q_{F, \emptyset}^E$, it follows that \emph(I) implies \emph{(III)}.

Thus it remains to prove that \emph{(III)} yields \emph{(II)}. Assume \emph{(III)} and let $F\subset\JJ(S)$ be a non-empty subset and $A\subset F$ non-empty satisfying the non-empty intersection condition of \emph{(II)}. Let $\sigma_A \in S$ such that $\sigma_AS = \bigcap\limits_{X \in A}{X}$ and
$\bigcup\limits_{Y \in F \setminus A}{Y} \neq S$. Thus,
\[Q_{F,A}^E = Q_{A,A}^E\,Q_{F \setminus A,\emptyset}^E\,Q_{A,A}^E =
V_{\sigma_A}\prod\limits_{Y \in F \setminus
A}{(1-V_{\sigma_A}^*E_YV_{\sigma_A})}V_{\sigma_A}^*.
\]
Each $Y\in F\setminus A$ has the form
$Y=p_{Y}S$ for some $p_Y\in S$. Since $S$ is right LCM, there exists $q_Y\in S$ such that
$\sigma_A^{-1}Y=q_YS$ and $\sigma_A q_YS=\sigma_AS\cap p_YS$.
Thus $\sigma_A^{-1}Y$ is a proper
right ideal of $S$ if and only if $\sigma_A \notin Y$. The choice of $F$ and
$A$ therefore guarantees that $\sigma_A^{-1}Y \neq S$ for all $Y \in F
\setminus A$. Hence $Q_{\sigma_A^{-1}(F\setminus A), \emptyset}^E\neq 0$ by
\emph{(III)}.  From
\[
Q_{\sigma_A^{-1}(F\setminus A), \emptyset}^E=\prod\limits_{Y \in F \setminus
A}{(1-E_{\sigma_A^{-1}(Y)})}
\]
and $V_{\sigma_A}^*E_YV_{\sigma_A} = E_{\sigma_A^{-1}(Y)}$, we obtain that
\[
Q_{F,A}^E = V_{\sigma_A}\prod\limits_{Y \in F \setminus
A}{(1-E_{\sigma_A^{-1}(Y)})}V_{\sigma_A}^*  \neq 0
\]
since $V_{\sigma_A}$ is an isometry. This finishes the proof of the proposition.
\end{proof}

\noindent The following result is a variant of  \cite[Lemma 1.4]{LaRa}.

\begin{lemma}\label{lem: first Qs--D} If $(E_{i})_{I}$ are commuting
projections in a unital $C^*$-algebra
$B$ and $A \subset F$ are finite subsets of $I$, then
each $Q_{F,A}^{E}$ is a projection, $\sum\limits_{A \subset F}Q_{F,A}^{E} = 1$, we have
\begin{equation}\label{eq: 1 for the first Qs--D}
\sum_{i \in F}\lambda_{i}E_{i}=\sum_{A\subset F}\Big(\sum_{i\in
A}\lambda_{i}\Big) Q_{F,A}^{E}
\end{equation}
for any choice of complex numbers $\{\lambda_i \mid i\in I\}$ and, moreover,
\begin{equation}\label{eq: 2 for the first Qs--D}
\Big\|\sum_{i\in F}\lambda_{i}E_{i}\Big\|=\max\limits_{\substack{A \subset F
\\ Q_{F,A}^{E} \neq 0}}\big|\sum_{i\in A}\lambda_{i}\big|.
\end{equation}
\end{lemma}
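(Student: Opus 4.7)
The plan is to derive all four assertions directly from the commutativity of the $E_i$ and the relation $E_i(1-E_i)=0$. First I would note that each $Q_{F,A}^E$ is a finite product of pairwise commuting projections, namely the $E_i$ for $i\in A$ together with the $1-E_j$ for $j\in F\setminus A$, and so is itself a projection. The completeness relation $\sum_{A\subset F}Q_{F,A}^E = 1$ then falls out of expanding the identity $\prod_{i\in F}\bigl(E_i+(1-E_i)\bigr) = 1$ and recognising each term in the expansion as precisely one $Q_{F,A}^E$.

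Next I would establish pairwise orthogonality: if $A\neq A'$ are subsets of $F$, pick any index $i$ in their symmetric difference; commutativity places a factor of the form $E_i(1-E_i)=0$ inside $Q_{F,A}^E Q_{F,A'}^E$, forcing the product to vanish. To prove \eqref{eq: 1 for the first Qs--D}, I would write $E_i = E_i\sum_{A\subset F}Q_{F,A}^E$ for each $i\in F$ and observe that $E_iQ_{F,A}^E$ equals $Q_{F,A}^E$ when $i\in A$ and $0$ otherwise; rearranging the resulting double sum produces the claimed expression with coefficients $\sum_{i\in A}\lambda_i$.

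For the norm identity \eqref{eq: 2 for the first Qs--D}, set $T:=\sum_{i\in F}\lambda_iE_i$ and $c_A:=\sum_{i\in A}\lambda_i$, so that $T = \sum_{A\subset F}c_AQ_{F,A}^E$ is a linear combination of finitely many pairwise orthogonal projections. Working inside the unital commutative $C^*$-subalgebra generated by the non-zero $Q_{F,A}^E$, the Gelfand transform sends $T$ to a function on a finite discrete spectrum taking the value $c_A$ on the clopen point corresponding to each non-zero $Q_{F,A}^E$, and its supremum norm is exactly $\max_{Q_{F,A}^E\neq 0}|c_A|$. Equivalently, one may compute $T^*T = \sum_A |c_A|^2 Q_{F,A}^E$ and iterate the $C^*$-identity $\|T\|^2 = \|T^*T\|$ to reach the same formula. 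There is no substantial obstacle; the argument is a finite combinatorial manipulation inside a commutative corner of $B$.
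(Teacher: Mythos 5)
Your proof is correct and follows the same route as the paper's: the paper likewise observes that commuting projections make each $Q_{F,A}^{E}$ a projection, obtains $\sum_{A\subset F}Q_{F,A}^{E}=1$ by expanding $\prod_{i\in F}(E_i+(1-E_i))$, and declares \eqref{eq: 1 for the first Qs--D} and \eqref{eq: 2 for the first Qs--D} to be immediate consequences. You have simply supplied the routine details (pairwise orthogonality, the computation of $E_iQ_{F,A}^{E}$, and the norm of a combination of orthogonal projections) that the paper leaves to the reader.
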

\begin{proof}
 Since the projections $E_{i}$ commute, $Q_{F,A}^{E}$ is a projection. The
 second assertion is obtained via \[1 = \prod\limits_{i \in F}{(E_{i} + 1-
 E_{i})}  = \sum\limits_{A \subset F}Q_{F,A}^{E}.\] Equation~\eqref{eq: 1 for
 the first Qs--D} as well as Equation~\eqref{eq: 2 for the first Qs--D} follow
 immediately from this.
\end{proof}

\noindent We now set up a conventional notation which will be used repeatedly in the sequel. Let $S$ be a right LCM semigroup. We let
\begin{equation}\label{eq:tF-tFD}
t_F := \sum\limits_{p,q \in F} \lambda_{p,q}v_pv_q^* \,\,\text{ and }\,\,
t_{F,\DD} := \sum\limits_{p \in F} \lambda_{p,p}e_{pS}
\end{equation}
denote an arbitrary, but fixed  finite linear combination in $C^*(S)$ and its
image in $\DD$ under $\Phi_\DD$, where $F$ is a finite subset of $S$ when $S$ has an identity, or, in case $S^*=\emptyset$, $F$ is a finite subset of $\tilde{S}$, and $\lambda_{p,q}\in \C$  for $p,q \in F$.

We will decompose $t_F - t_{F,\DD}$ into further terms,
based on a suitable subset $A \subset F$ depending on the choice of the
$\lambda_{p,q}$'s. We are interested in combinations $t_F$ with $t_{F, \DD}\neq
0$, so we shall make this a standing assumption.

\begin{lemma}\label{lem: basic projection--D}
Let $S$ be a right LCM semigroup and $t_F$, $t_{F,\DD}$ be as in
\eqref{eq:tF-tFD}. Then there exists a non-empty subset $A\subset F$ such that
the projection
$Q_{F,A}^e$ is non-zero and satisfies the following:
\begin{enumerate}
\item[(i)] $Q_{F,A}^ev_pv_q^*Q_{F,A}^e=0$ for all $p,q\in F$ with $p\not\in A$
or $q\not\in A$.
\item[(ii)] $\Vert Q_{F,A}^{e} t_{F,\DD}Q_{F,A}^{e}\Vert= \|t_{F,\DD}\|$.
\item[(iii)] If $t_{F, \DD}$ is positive, then we may take
$Q_{F,A}^{e}t_{F,\DD}Q_{F,A}^{e}=\|t_{F,\DD}\|Q_{F,A}^{e} $.
\end{enumerate}
\end{lemma}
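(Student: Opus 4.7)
The plan is to select $A \subset F$ as a subset realising the maximum in~\eqref{eq: 2 for the first Qs--D} applied to the diagonal element $t_{F,\DD}=\sum_{p \in F}\lambda_{p,p}\,e_{pS}$. Since the projections $(e_{pS})_{p \in F}$ pairwise commute, Lemma~\ref{lem: first Qs--D} provides the spectral decomposition
$$
t_{F,\DD}=\sum_{A'\subset F}\Bigl(\sum_{p\in A'}\lambda_{p,p}\Bigr) Q^e_{F,A'}, \qquad \|t_{F,\DD}\|=\max_{\substack{A'\subset F\\ Q^e_{F,A'}\neq 0}}\Bigl|\sum_{p\in A'}\lambda_{p,p}\Bigr|.
$$
Picking $A$ to realise this maximum ensures $Q^e_{F,A}\neq 0$ by construction; moreover, the standing assumption $t_{F,\DD}\neq 0$ rules out $A=\emptyset$, since that set contributes the summand $0$ to the maximum. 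Hence $A$ is non-empty, giving the two non-triviality assertions of the lemma at once.

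For (i), Remark~\ref{rem: range proj of the gen isometries} lets me write $v_p=e_{pS}v_p$ and $v_q^*=v_q^*e_{qS}$, so $v_pv_q^*=e_{pS}v_pv_q^*e_{qS}$. If $p\in F\setminus A$, the factor $(1-e_{pS})$ that appears in the product defining $Q^e_{F,A}$ commutes with every $e_{rS}$ and satisfies $(1-e_{pS})e_{pS}=0$, forcing $Q^e_{F,A}e_{pS}=0$; the case $q\in F\setminus A$ is symmetric and yields $e_{qS}Q^e_{F,A}=0$. In either case the sandwiched expression $Q^e_{F,A}v_pv_q^*Q^e_{F,A}$ vanishes. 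For (ii), the same commutativity argument gives $e_{pS}Q^e_{F,A}=Q^e_{F,A}$ when $p\in A$ and $e_{pS}Q^e_{F,A}=0$ when $p\in F\setminus A$, so
$$
Q^e_{F,A}\,t_{F,\DD}\,Q^e_{F,A}=\Bigl(\sum_{p\in A}\lambda_{p,p}\Bigr) Q^e_{F,A};
$$
combining $\|Q^e_{F,A}\|=1$ with the choice of $A$ yields $\|Q^e_{F,A} t_{F,\DD}Q^e_{F,A}\|=\|t_{F,\DD}\|$.

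For (iii), positivity of $t_{F,\DD}$ together with the spectral decomposition forces each coefficient $\sum_{p\in A'}\lambda_{p,p}$ indexed by an $A'$ with $Q^e_{F,A'}\neq 0$ to be non-negative; hence the maximum in~\eqref{eq: 2 for the first Qs--D} is attained without an absolute value, and one may select $A$ with $\sum_{p\in A}\lambda_{p,p}=\|t_{F,\DD}\|$, so the displayed identity from (ii) becomes $Q^e_{F,A}t_{F,\DD}Q^e_{F,A}=\|t_{F,\DD}\|\,Q^e_{F,A}$. The single conceptual point to notice is that the $Q^e_{F,A'}$ are precisely the spectral atoms of $t_{F,\DD}$ inside the finite-dimensional abelian subalgebra of $\DD$ generated by $\{e_{pS}:p\in F\}$; once this is recognised, positivity translates automatically into non-negativity of the atomic coefficients, and the remaining arguments are routine bookkeeping with $v_pv_q^*=e_{pS}v_pv_q^*e_{qS}$ and $(1-e_{pS})e_{pS}=0$.
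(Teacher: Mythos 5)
Your proposal is correct and follows essentially the same route as the paper: both select $A$ as a maximiser in \eqref{eq: 2 for the first Qs--D} applied to $t_{F,\DD}$ (which forces $Q^e_{F,A}\neq 0$ and, via the standing assumption $t_{F,\DD}\neq 0$, forces $A\neq\emptyset$), and both prove (i) by observing that a factor $(1-e_{pS})$ annihilates $v_p=e_{pS}v_p$ (resp.\ $v_q^*=v_q^*e_{qS}$). You merely spell out the spectral-atom computation $Q^e_{F,A}t_{F,\DD}Q^e_{F,A}=\bigl(\sum_{p\in A}\lambda_{p,p}\bigr)Q^e_{F,A}$ and the non-negativity of the atomic coefficients in the positive case, which the paper leaves implicit.
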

\begin{proof}
The projections $(e_{pS})_{p \in F}$ commute because of $e_{pS}e_{qS} = e_{pS
\cap qS}$ for any  $p, q\in S$. Applying Lemma \ref{lem: first Qs--D} yields
$A \subset F$ which satisfies $Q_{F,A}^e\not=0$, and
\[\Vert Q_{F,A}^{e} t_{F,\DD}Q_{F,A}^{e}\Vert =\|t_{F,\DD}\|.\]
If $t_F$ is positive, then  we may choose $Q_{F,A}^{e}t_{F,\DD}Q_{F,A}^{e}$ to
be a multiple of $Q_{F,A}^{e}$. As $t_{F,\DD} \neq 0$, we must have $A \neq
\emptyset$. The fact that $Q_{F,A}^e \neq 0$ and the right LCM property of $S$
imply that
\[
Q_{F,A}^e = \prod\limits_{q\in F\setminus A}(e_{\sigma_AS}-e_{\sigma_AS\cap
qS}),
 \]
where $\sigma_A \in S$ is such that $\sigma_AS=\cap_{p\in A}pS$.  We claim that $Q_{F,A}^{e}v_pv_q^*Q_{F,A}^{e}=0$ for $p \in F \setminus A$. Indeed, if we have $p\not\in A$, then $Q_{F,A}^{e}v_pv_q^*Q_{F,A}^{e}$ contains a factor of $(1-e_{pS})v_p=v_p-v_p=0$, and hence $Q_{F,A}^{e}v_pv_q^*Q_{F,A}^{e}=0$. Similarly, $v_q^*(1-e_{qS}) = 0$, so we get $Q_{F,A}^{e}v_pv_q^*Q_{F,A}^{e}=0$ for $q \in F \setminus A$.
\end{proof}

\noindent Before we state the next result we introduce some notation. Assume the
hypotheses of Lemma~\ref{lem: basic projection--D} and let $A$ be the finite
subset of $F$ satisfying (i)-(iii). Fix $\sigma_A \in S$ such that
$\bigcap_{p \in A}pS = \sigma_AS$ (this element is not unique for the
given $A$; in case $S^*\neq \emptyset$ then $\sigma_Ax$ for any $x \in S^*$
will satisfy the same identity as $\sigma_A$). For each $p\in A$, let $p_A \in
S$ denote the element satisfying $pp_A = \sigma_A$. By left cancellation, this
element is unique. Define now \[\begin{array}{lcl}
t_{F,1}
&=&\sum\limits_{\substack{p,q\in F, p\neq q\\p\notin A\text{ or } q\notin A}}
\lambda_{p,q}v_pv_q^*, \label{def:TF1--D}\vspace*{2mm}\\
t_{F,2}
&=& \sum\limits_{\substack{p,q\in A, p\neq q\\p_A S\neq q_A S}}
\lambda_{p,q}v_pv_q^*\vspace*{2mm},\text{ and}\\
t_{F,3}
&=& \sum\limits_{\substack{p,q\in A, p\neq q\\p_A S=q_A S}}
\lambda_{p,q}v_pv_q^*.
\end{array}\]

The sum $t_{F,3}$ will only be relevant here when $|S^*|>1$. When $|S^*|\leq
1$, we distinguish two cases: if
$S^*=\emptyset$, our standing assumption says that
$sS=tS$ forces $s=t$ for $s, t\in S$. Hence a term in $t_{F, 3}$ would
correspond to $p_A=q_A$, which implies $pp_A=qp_A$. Thus, if the semigroup $S$
is also right cancellative, we would get $p=q$, a contradiction.  The same
argument rules out $t_{F,3}$ when $S^*=\{1_S\}$.

\begin{lemma}\label{lem: decomposition of t_F--D}
 Assume the hypotheses of Lemma~\ref{lem: basic projection--D} and let $A$ be
 the finite subset of $F$ satisfying (i)-(iii).  Fix $\sigma_A$ as above.
 Define a subset of $A\times A$ by
\[
A_1 = \{(p,q)\mid p\neq q, \exists x,y\in S, x,y \text { not both units}:
p_Ax=q_Ay, p_AS \cap q_AS = p_AxS\}.
\]
Then
\begin{align}
e_{\sigma_AS}t_{F,2}e_{\sigma_AS}
&= \sum\limits_{(p,q) \in A_1}
\lambda_{p,q}v_{\sigma_Ax}v_{\sigma_Ay}^*.\label{def:TF2Q1--D}
\end{align}
If $|S^*|>1$, then also
\begin{align}
e_{\sigma_AS}t_{F,3}e_{\sigma_AS}&= \sum\limits_{\substack{p,q\in A, p\neq
q,\\ p_A = q_Ax, x\in S^*}}
\lambda_{p,q}v_{\sigma_A}v_{x}v_{\sigma_A}^*.\label{def:TF3Q1--D}
\end{align}
\end{lemma}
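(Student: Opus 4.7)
The plan is to reduce everything to one explicit monomial computation, namely the evaluation of $e_{\sigma_A S} v_p v_q^* e_{\sigma_A S}$ for an arbitrary pair $p \neq q$ in $A$, and then resum over the indexing sets defining $t_{F,2}$ and $t_{F,3}$. The starting step is the identity $e_{\sigma_A S} = v_p e_{p_A S} v_p^* = v_q e_{q_A S} v_q^*$, which follows from (L2) since $\sigma_A S = pp_A S = qq_A S$. Combined with $v_p^* v_p = 1$, this yields the reduction identities $e_{\sigma_A S} v_p = v_p e_{p_A S}$ and $v_q^* e_{\sigma_A S} = e_{q_A S} v_q^*$, so by (L4),
\[
e_{\sigma_A S} v_p v_q^* e_{\sigma_A S} = v_p e_{p_A S \cap q_A S} v_q^*.
\]

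If $p_A S \cap q_A S = \emptyset$, the right-hand side vanishes by (L3). Otherwise, Lemma~\ref{lem: constr right ideals for right LCM sgp} and the right LCM property supply $x, y \in S$ with $p_A x = q_A y$ and $p_A S \cap q_A S = p_A x S$. Writing $e_{p_A x S} = v_{p_A x} v_{p_A x}^*$, applying (L1), and using that $q p_A x = q q_A y = \sigma_A y$, the expression collapses to $v_{\sigma_A x} v_{\sigma_A y}^*$. This single identity is the engine of the proof.

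For the first claim, I observe that the constraint $p_A S \neq q_A S$ appearing in $t_{F,2}$ is equivalent to excluding the case where both $x$ and $y$ lie in $S^*$: if both were units then $p_A S = p_A x S = q_A y S = q_A S$, contradicting the hypothesis; conversely, any non-trivial intersection with $p_A S \neq q_A S$ forces at least one of $x, y$ to be a non-unit. This is precisely the defining condition of $A_1$, so summing the monomial identity over the relevant pairs yields \eqref{def:TF2Q1--D}. For the second claim, when $p_A S = q_A S$ and $|S^*| > 1$, one has $p_A = q_A z$ for some $z \in S^*$, so the LCM can be chosen as $r = p_A$ itself, giving $x = 1_S$ and $y = z$. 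The general reduction then becomes $v_{\sigma_A} v_{\sigma_A z}^* = v_{\sigma_A} v_z^* v_{\sigma_A}^*$, and since $v_z$ is unitary for $z \in S^*$ by Remark~\ref{rem: range proj of the gen isometries}, this matches \eqref{def:TF3Q1--D} after identifying the parameter in $S^*$.

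The main technical care required is that the pair $(x, y)$ witnessing the LCM is only determined up to a common factor from $S^*$, but this ambiguity is absorbed into the overall form of the summand $v_{\sigma_A x} v_{\sigma_A y}^*$ (respectively into the unitary $v_z$ in the second identity). Aside from this bookkeeping, the argument is a systematic application of the defining relations (L1)--(L4), and I do not anticipate any conceptual obstacle.
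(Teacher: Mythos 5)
Your proof is correct and follows essentially the same route as the paper: both reduce to the single monomial computation $e_{\sigma_AS}v_pv_q^*e_{\sigma_AS}=v_p e_{p_AS\cap q_AS}v_q^*$, resolve the intersection via the right LCM property to get $v_{\sigma_Ax}v_{\sigma_Ay}^*$, and then sort the surviving pairs into $A_1$ versus the case $p_AS=q_AS$. Your observation that the direct computation gives $v_{\sigma_A}v_z^*v_{\sigma_A}^*$ when $p_A=q_Az$, which agrees with \eqref{def:TF3Q1--D} only after replacing $z$ by $z^{-1}$ (equivalently, indexing by $q_A=p_Ax$), is accurate and slightly more careful than the paper's own treatment; since all that matters downstream is that the resulting unit lies in $S^*\setminus\{1_S\}$, the discrepancy is harmless.
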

\begin{proof}
Clearly,  $t_F= t_{F,\DD}+t_{F,1}+t_{F,2}+t_{F,3}$.
Let us look more closely at the cut-downs of $t_{F,2}$ and $t_{F,3}$ by
$Q_{F,A}^e$. For $p,q \in A, p \neq q$, we have
\[\begin{array}{lcl}
e_{\sigma_AS}v_pv_q^*e_{\sigma_AS} &=&
v_{\sigma_A}v_{p_A}^*v_{q_A}v_{\sigma_A}^* \\
&=& v_{\sigma_A}v_{p_A}^*e_{p_AS}e_{q_AS}v_{q_A}v_{\sigma_A}^*\vspace*{2mm}\\
&=& \begin{cases} 0,&\text{ if } p_AS \cap q_AS = \emptyset\\
v_{\sigma_Ax}v_{\sigma_Ay}^*,&\text{ if } p_AS \cap q_AS \neq \emptyset,
\end{cases}\end{array} \]
where $x,y \in S$ satisfy $p_Ax = q_Ay$ and $p_AS \cap q_AS = p_AxS$. The
choice of the pair $(x,y)$ is unique up to composition from the right by
$S^*$. Hence, $v_{\sigma_Ax}v_{\sigma_Ay}^*$ is independent of the choice of
$(x,y)$. Therefore, with regard to $t_{F,2}$, we only have to deal with $p,q
\in A, p \neq q$ such that $p_AS \cap q_AS \neq \emptyset$. These are exactly
the pairs $(p,q)$ in $A_1$, so \eqref{def:TF2Q1--D} follows.

If $v_pv_q^*$ are terms in $t_{F,3}$ then $p_AS=q_AS$ means that
$p_A\mathcal{R}q_A$, where $\mathcal{R}$ is the right Green relation. Thus
there exists $x\in S^*$ such that $p_A=q_Ax$, and  \eqref{def:TF3Q1--D}
follows.
\end{proof}

\begin{lemma}\label{lem: second projection--D}
If $S$ is a right LCM semigroup, then there
are finite subsets $A,F_1$ of $S$ with $A \subset F \subset F_1$ and
$Q_{F_1,A}^e \neq 0$ such that
\begin{align*}
Q_{F_1,A}^e t_F Q_{F_1,A}^e &= Q_{F_1,A}^e(t_{F,\DD} + t_{F,3})Q_{F_1,A}^e
\text{ and }\\
\Vert Q_{F_1,A}^e t_{F,\DD} Q_{F_1,A}^e \Vert &= \|t_{F,\DD}\|.
\end{align*}
If $t_{F, \DD}$ is positive, then we may take $Q_{F_1,A}^e t_{F,\DD}
Q_{F_1,A}^e=\|t_{F,\DD}\| Q_{F_1,A}^e$.
\end{lemma}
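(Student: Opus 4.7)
The plan is to strictly enlarge $F$ to a finite $F_1 \supseteq F$, with $A \subset F$ provided by Lemma~\ref{lem: basic projection--D}, by throwing in elements designed so that the extra factors $(1 - e_{wS})$ in $Q_{F_1,A}^e$ annihilate every off-diagonal term of $t_{F,2}$ upon compression, while keeping $Q_{F_1,A}^e \neq 0$ and preserving the diagonal norm of $t_{F,\DD}$.

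First apply Lemma~\ref{lem: basic projection--D} to obtain a non-empty $A \subset F$ with $Q_{F,A}^e \neq 0$, $Q_{F,A}^e t_{F,1} Q_{F,A}^e = 0$, and $\|Q_{F,A}^e t_{F,\DD} Q_{F,A}^e\| = \|t_{F,\DD}\|$. Fix $\sigma_A$ with $\bigcap_{p \in A} pS = \sigma_A S$, and for each $p \in A$ the unique $p_A$ with $pp_A = \sigma_A$. For every $(p,q) \in A_1$, Lemma~\ref{lem: decomposition of t_F--D} gives $x_{p,q}, y_{p,q} \in S$ with $p_A x_{p,q} = q_A y_{p,q}$ and $p_A x_{p,q} S = p_A S \cap q_A S$. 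Since $(p,q)$ lies in the support of $t_{F,2}$ we have $p_A S \neq q_A S$, which rules out the simultaneous equalities $p_A x_{p,q} S = p_A S$ and $q_A y_{p,q} S = q_A S$ (their conjunction would force $p_A S = p_A x_{p,q} S = q_A y_{p,q} S = q_A S$). Pick $z_{p,q} \in \{x_{p,q}, y_{p,q}\}$ corresponding to a strict inclusion, so that $\sigma_A z_{p,q} S \subsetneq \sigma_A S$ (the equivalence with the strict inclusion in $p_A S$ or $q_A S$ uses left cancellation in $S$), and set
\[
w_{p,q} := \sigma_A z_{p,q}, \qquad F_1 := F \cup \{w_{p,q} \mid (p,q) \in A_1\}.
\]

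The critical step is to verify $Q_{F_1,A}^e \neq 0$. By Lemma~\ref{lem:lambda-iso-diagonal-general} together with Lemma~\ref{lem:right LCM for unitisation}, this reduces to showing that $\sigma_A\tilde{S} \setminus \bigcup_{q \in F_1 \setminus A} q\tilde{S} \neq \emptyset$ in $\tilde{S}$, which by independence of $\JJ(\tilde{S})$ (Corollary~\ref{cor: independence for monoids}) further reduces to verifying that each $q\tilde{S} \cap \sigma_A\tilde{S}$ is empty or a proper sub-ideal of $\sigma_A\tilde{S}$. For the original $q \in F \setminus A$ this is immediate from $Q_{F,A}^e \neq 0$. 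For an added $w_{p,q} = \sigma_A z$, suppose for contradiction that $\sigma_A z \tilde{S} = \sigma_A \tilde{S}$; then $\sigma_A = \sigma_A z s$ for some $s \in \tilde{S}$. If $s = 1_{\tilde{S}}$, then $\sigma_A z = \sigma_A$, so $\sigma_A z S = \sigma_A S$; if $s \in S$, then for every $t \in S$ the identity $\sigma_A t = \sigma_A z (st) \in \sigma_A z S$ yields $\sigma_A S \subseteq \sigma_A z S$ and again $\sigma_A z S = \sigma_A S$. Either conclusion contradicts the choice $\sigma_A z S \subsetneq \sigma_A S$, so $\sigma_A z \tilde{S} \subsetneq \sigma_A \tilde{S}$ as required.

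With $Q_{F_1,A}^e \neq 0$ in hand, the remaining identities are routine. Since $Q_{F_1,A}^e \leq Q_{F,A}^e$, Lemma~\ref{lem: basic projection--D}(i) gives $Q_{F_1,A}^e t_{F,1} Q_{F_1,A}^e = 0$. For a summand $\lambda_{p,q} v_p v_q^*$ of $t_{F,2}$, Lemma~\ref{lem: decomposition of t_F--D} together with $Q_{F_1,A}^e \leq e_{\sigma_A S}$ reduces $v_p v_q^*$ to $v_{\sigma_A x_{p,q}} v_{\sigma_A y_{p,q}}^*$ (or $0$), and the factor $(1 - e_{w_{p,q} S})$ in $Q_{F_1,A}^e$ annihilates either $v_{\sigma_A x_{p,q}}$ on the left or $v_{\sigma_A y_{p,q}}^*$ on the right, yielding $Q_{F_1,A}^e t_{F,2} Q_{F_1,A}^e = 0$. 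Using $Q_{F_1,A}^e e_{pS} = Q_{F_1,A}^e$ for $p \in A$ and $Q_{F_1,A}^e e_{pS} Q_{F_1,A}^e = 0$ for $p \in F_1 \setminus A$, one computes $Q_{F_1,A}^e t_{F,\DD} Q_{F_1,A}^e = \bigl(\sum_{p \in A} \lambda_{p,p}\bigr) Q_{F_1,A}^e$, whose norm is $\|t_{F,\DD}\|$ by the choice of $A$; the positive case follows from Lemma~\ref{lem: basic projection--D}(iii). The main obstacle is the nonvanishing of $Q_{F_1,A}^e$, where the carefully chosen factor $z_{p,q}$ and independence of $\JJ(\tilde{S})$ play the decisive role.
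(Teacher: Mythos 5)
Your proof is correct and follows essentially the same route as the paper's: enlarge $F$ to $F_1$ by adjoining, for each $(p,q)\in A_1$, the element $\sigma_A z_{p,q}$ with $z_{p,q}$ the member of $\{x_{p,q},y_{p,q}\}$ giving a proper ideal, verify $Q_{F_1,A}^e\neq 0$ via independence of $\JJ(S)$ (or $\JJ(\tilde S)$), and let the new factors $(1-e_{\sigma_A z_{p,q}S})$ kill the $t_{F,2}$ terms. The only (cosmetic) difference is that you select $z_{p,q}$ by which inclusion is strict, whereas the paper selects $x$ or $y$ according to which one fails to be a unit — these amount to the same choice.
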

\begin{proof} We invoke the notation of Lemma~\ref{lem: decomposition of
t_F--D}.
For each $(p,q)\in A_1$, let $\alpha_{p,q}\in S$ be given by
\[\alpha_{p,q}:=
\begin{cases}
x & \text{if $x\in S\setminus S^*$,}\\
y & \text{if $x\in S^*$,}\\
\end{cases}\]
and set $F_1 := F \cup \{\sigma_A\alpha_{p,q}\mid (p,q) \in A_1 \}$. First of
all, let us show that $Q_{F_1,A}^e \neq 0$ holds. Due to $Q_{F,A}^{e} \neq 0$,
we know that $\sigma_AS\cap rS$ is a proper and non-empty subset of $\sigma_AS$
for each $r\in F\setminus A$. Choose for each $r\in F\setminus A$, an element
$r'\in S\setminus S^*$ such that $\sigma_AS\cap rS=\sigma_Ar'S$. It follows
that $r'S \subsetneqq S$ and $\alpha_{p,q}S \subsetneqq S$ for all $r \in
F\setminus A$
and all $(p,q) \in A_1$.

If $S$ has an identity, $\JJ(S)$ is independent by Corollary~\ref{cor: independence for monoids} and hence we get
\[\bigcup\limits_{r\in F\setminus A}r'S \cup \bigcup\limits_{(p,q)\in
A_1}\alpha_{p,q}S \subsetneqq S\]
as both index sets are finite. By taking complements and using the implication
\emph{(I)} $\Rightarrow$ \emph{(II)} from Proposition \ref{prop: part (i)
independent}, this shows that
\[\prod\limits_{r \in F \setminus A}{(1-e_{r'S})}\prod\limits_{(p,q) \in
A_1}{(1-e_{\alpha_{p,q}S})} \neq 0.\]
In the case where $S^*=\emptyset$, we get $r'\tilde{S} \subsetneqq \tilde{S}$ and $\alpha_{p,q}\tilde{S} \subsetneqq \tilde{S}$ for all $r \in
F\setminus A$ and all $(p,q) \in A_1$. By Lemma~\ref{lem:right LCM for unitisation}, $\JJ(\tilde{S})$ is independent. If we combine this with Proposition~\ref{prop: part (i) independent} and the isomorphism $\tilde{\DD} \cong \DD$ from Lemma~\ref{lem:lambda-iso-diagonal-general}, we also get
\[\prod\limits_{r \in F \setminus A}{(1-e_{r'S})}\prod\limits_{(p,q) \in
A_1}{(1-e_{\alpha_{p,q}S})} \neq 0\]
in the case $S^*=\emptyset$.

Since $v_{\sigma_A}$ is an isometry and $Q_{F_1,A}^e$ has the form
\begin{equation}\label{eq:Q1-expanded--D}
Q_{F_1,A}^e = v_{\sigma_A}\Big(\prod\limits_{r\in F\setminus
A}(1-e_{r'S})\prod\limits_{(p,q)\in
A_1}(1-e_{\alpha_{p,q}S})\Big)v_{\sigma_A}^*,
\end{equation}
it follows that $Q_{F_1,A}^e \neq 0$. Then $Q_{F_1,A}^e$ is a non-trivial
subprojection of $Q_{F,A}^e$, so
\[\Vert Q_{F_1,A}^e t_{F,\DD} Q_{F_1,A}^e \Vert= \|t_{F,\DD}\|.\]
If $t_{F,\DD} $ is positive, then we have $Q_{F_1,A}^e t_{F,\DD}
Q_{F_1,A}^e=\|t_{F,\DD}\|Q_{F_1,A}^e$. Note that
Lemma~\ref{lem: basic projection--D} implies $Q_{F_1,A}^e t_{F,1} Q_{F_1,A}^e
= 0$, and \eqref{def:TF2Q1--D} gives
\[Q_{F_1,A}^e t_{F,2} Q_{F_1,A}^e = Q_{F_1{\setminus}A,\emptyset}^e
\sum_{(p,q) \in A_1}
\lambda_{p,q}v_{\sigma_Ax}v_{\sigma_Ay}^* Q_{F_1{\setminus}A,\emptyset}^e.
\]

Now suppose $(p,q)\in A_1$ and $\alpha_{p,q}=x$. Then
$Q_{F_1{\setminus}A,\emptyset}^ev_{\sigma_Ax}v_{\sigma_Ay}^*Q_{F_1{\setminus}A,\emptyset}^e$
 contains a factor
$(1-e_{\sigma_AxS})v_{\sigma_Ax}v_{\sigma_Ay}^* = 0$ and hence
$Q_{F_1,A}^ev_pv_q^*Q_{F_1,A}^e = 0$. A similar argument gives
$Q_{F_1,A}^ev_pv_q^*Q_{F_1,A}^e = 0$ for $(p,q)\in A_1$ and $\alpha_{p,q}=y$.
Therefore, we have verified $Q_{F_1,A}^e t_{F,2} Q_{F_1,A}^e = 0$, or in other
words
\[Q_{F_1,A}^e t_F Q_{F_1,A}^e = Q_{F_1,A}^e(t_{F,\DD} + t_{F,3})Q_{F_1,A}^e.\vspace*{-6mm}\]
\vspace*{2mm}\end{proof}

\begin{lemma}\label{lem: projection for trivial S*--D}
 Let $S$ be a \emph{cancellative} right LCM semigroup with $S^*=\emptyset$.
 Then there are finite subsets $A,F_1$ of $S$ such that
 $A \subset F \subset F_1$, $Q_{F_1,A}^e \neq 0$ and
\[\Vert Q_{F_1,A}^e t_F Q_{F_1,A}^e \Vert = \|t_{F,\DD}\|.\]
If $t_F$ is positive, then we may take $Q_{F_1,A}^e t_F
Q_{F_1,A}^e=\|t_{F,\DD}\|Q_{F_1,A}^e.$
\end{lemma}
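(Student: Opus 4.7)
The plan is to derive this lemma as a direct consequence of Lemma~\ref{lem: second projection--D} once we observe that the troublesome sum $t_{F,3}$ vanishes identically under the present hypotheses. Recall that $t_{F,3}$ collects the terms $\lambda_{p,q}v_pv_q^*$ with $p,q\in A$, $p\neq q$, and $p_AS=q_AS$. Because $S^*=\emptyset$, the standing hypothesis on semigroups without identity forces $p_A = q_A$ whenever $p_AS = q_AS$; then left cancellation in $S$ applied to $pp_A=\sigma_A=qq_A$ yields $p=q$, contradicting $p\neq q$. Hence the sum defining $t_{F,3}$ is empty, and so $t_{F,3}=0$. This observation is already recorded in the discussion preceding Lemma~\ref{lem: decomposition of t_F--D}, so I just need to put it to use.

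First I would apply Lemma~\ref{lem: second projection--D} to obtain finite subsets $A\subset F \subset F_1$ of $S$ with $Q_{F_1,A}^e\neq 0$,
\[
Q_{F_1,A}^e t_F Q_{F_1,A}^e = Q_{F_1,A}^e\bigl(t_{F,\DD}+t_{F,3}\bigr)Q_{F_1,A}^e,
\qquad
\bigl\|Q_{F_1,A}^e t_{F,\DD} Q_{F_1,A}^e\bigr\| = \|t_{F,\DD}\|,
\]
with the additional property that $Q_{F_1,A}^e t_{F,\DD}Q_{F_1,A}^e = \|t_{F,\DD}\|Q_{F_1,A}^e$ whenever $t_{F,\DD}$ is positive. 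Substituting $t_{F,3}=0$ gives
\[
Q_{F_1,A}^e t_F Q_{F_1,A}^e = Q_{F_1,A}^e t_{F,\DD} Q_{F_1,A}^e,
\]
which immediately yields the norm identity $\|Q_{F_1,A}^e t_F Q_{F_1,A}^e\|=\|t_{F,\DD}\|$.

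For the final assertion, suppose that $t_F$ is positive. Then $t_{F,\DD}$ is also positive: one way to see this is to note that the assignment $v_pv_q^*\mapsto \delta_{p,q}e_{pS}$ defines the canonical diagonal conditional expectation on the spanning monomials (cf.\ Proposition~\ref{prop: faithful cond exp for the red algebra} applied via the isomorphism $\lambda|_\DD$ of Lemma~\ref{lem:lambda-iso-diagonal-general}), and conditional expectations preserve positivity. Alternatively, one may cut down by $Q_{F,A}^e$ with the $A$ of Lemma~\ref{lem: basic projection--D}(iii), which sends $t_F$ to a positive multiple of $Q_{F,A}^e$, forcing $\|t_{F,\DD}\|$ to equal that multiple. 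In either case the positive branch of Lemma~\ref{lem: second projection--D} applies and we obtain
\[
Q_{F_1,A}^e t_F Q_{F_1,A}^e = Q_{F_1,A}^e t_{F,\DD} Q_{F_1,A}^e = \|t_{F,\DD}\|\,Q_{F_1,A}^e,
\]
completing the proof.

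There is no serious obstacle here; the only conceptual point is making the cancellation argument that kills $t_{F,3}$ explicit, and ensuring that the positivity of $t_{F,\DD}$ follows from that of $t_F$ so that the sharp ``multiple of $Q_{F_1,A}^e$'' conclusion of Lemma~\ref{lem: second projection--D} is available. Everything else is routine bookkeeping inherited from the preceding lemma.
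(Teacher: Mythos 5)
Your proposal is correct and follows essentially the same route as the paper: the paper's proof likewise just notes that $t_{F,3}$ is empty (by the remark preceding Lemma~\ref{lem: decomposition of t_F--D}) and then invokes Lemma~\ref{lem: second projection--D}. One small slip: to deduce $p=q$ from $pp_A=qp_A$ you need \emph{right} cancellation, not left cancellation as you wrote, but since $S$ is assumed cancellative this does not affect the argument.
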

\begin{proof}
It suffices to note that the sum $t_{F, 3}$ is empty, by our remark prior to
Lemma~\ref{lem: decomposition of t_F--D}. Then the finite subsets $A,F_1$  of
$S$ given by Lemma \ref{lem: second projection--D} satisfy the claim.
\end{proof}

\section{\texorpdfstring{Uniqueness theorem for right LCM semigroup algebras using $\DD$}{Uniqueness theorem for right LCM semigroup algebras using the diagonal}}\label{sec: diagonalsection}

\noindent In this section we prove a uniqueness theorem which involves a nonvanishing
condition on
elements of the diagonal subalgebra $\DD\subset C^*(S)$. Our theorem will
apply to right LCM semigroups $S$ satisfying additional properties, including
that $S$ must be  cancellative. One of these conditions, the one we call (D2),
is rather technical. Before we state it we introduce two other conditions,
which besides being closely related to  (D2), are also likely to have more
transparent formulations in examples. Indeed,  they are often satisfied, while
condition (D2) may be harder to obtain for large classes of semigroups.

Let $S$ be a right LCM semigroup with $S^* \neq \emptyset$ and consider the
 action $S^* \curvearrowright\JJ(S)$ given by left multiplication, that is $x\cdot X=xX$ for $x\in S^*$ and $X\in J(S)$. It is standard terminology that the action $S^*{\curvearrowright}\JJ(S)$ is effective if, by definition, for every $x$ in $S^*\setminus\{1_S\}$ there is $X \in \JJ(S)$ such that $xX \neq X$. We next introduce three other properties that a semigroup can have.

\begin{definition}\label{def: conditions on S for using D}
Let $S$ be a right LCM semigroup with $S^*\neq \emptyset$.
We say that the action $S^* \curvearrowright\JJ(S)$ given by left multiplication
is {\em strongly effective} if for all $x \in S^*\setminus \{1_S\}$ and $p \in
S$, there exists $q \in pS$ such that $xqS \neq qS$.
\end{definition}
Consider further the following two conditions that $S$ can satisfy:
\begin{enumerate}
\item[(D1)] For all $x \in S^*$ and $X \in \JJ(S)$,  we have $xX \cap X \neq
\emptyset\Longrightarrow xX = X$.
\item[(D2)] If $s_0 \in S, s_1 \in s_0S$ and $F \subset S$ is a finite subset so that
$s_1S \cap \biggl(S{\setminus}\bigcup\limits_{q \in F} qS\biggr) \neq
\emptyset,$
 then, for every $x \in S^*{\setminus}\{1_S\}$, there is $s_2 \in s_1S$
 satisfying
\[s_2S \cap \biggl(S{\setminus}\bigcup\limits_{q \in F} qS\biggr) \neq
\emptyset \text{ and } s_0^{-1}s_2S \cap xs_0^{-1}s_2S = \emptyset.\]
\end{enumerate}

\begin{remark}\label{rmk:consequence-strongly-effective}
Suppose that the action $S^*{\curvearrowright}\JJ(S)$ is strongly effective and (D1) is satisfied. It is immediate to see that for every $x \in
S^*\setminus\{1_S\}$ and $p\in S$ there exists $q\in pS$ such that $xqS\cap
qS=\emptyset$. In fact, more is true. Let $s_0\in S$, $s_1\in s_0S$, and $x\in S^*\setminus\{1_S\}$. Write $s_1=s_0r$ for some $r\in S$. By the previous observation applied to $x$ and $r\in S$ there is $r'\in rS$ such that $xr'S\cap r'S=\emptyset$. If we now  let $s_2=s_0r'\in s_1 S$, we have established condition (D2) in case $F$ is the empty set. Conversely, if condition (D2) is satisfied, then by applying it with $F$ equal the empty set and $s_0=1_S$ it follows that $S^*{\curvearrowright}\JJ(S)$ is strongly effective.
\end{remark}

\noindent For convenience, we will denote the elements of $F$ by $q_1,\dots,q_{|F|}$ whenever $F \neq \emptyset$.

\begin{thm}\label{thm: using the diagonal}
Let $S$ be a cancellative right LCM semigroup such that $\Phi_{\DD}:C^*(S)\to
\DD$ is a faithful conditional expectation. Let $(V_p)_{p\in S}$ and
$(E_{pS})_{p\in S}$ be families of
isometries and projections in a $C^*$-algebra $B$ satisfying (L1)--(L4). Let
$\pi:=\pi_{V,E}$ be the associated $*$-homomorphism from $C^*(S)$ to $B$. Assume
that one of the following conditions holds:
\begin{enumerate}[(1)]
\item $|S^*| \leq 1$.
\item $|S^*|>1$ and $S$ satisfies condition (D2).
\end{enumerate}
Then $\pi:C^*(S)\to B$ is injective if and only if
\begin{equation}\label{eq: key to faithfulness independent version}
\prod_{p\in F}(1-E_{pS}) \not= 0 \text{ for every finite }F\subset S\setminus
S^*.
\end{equation}
\end{thm}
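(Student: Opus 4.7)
My approach follows the Laca--Raeburn--Cuntz strategy: reduce injectivity of $\pi$ on $C^*(S)$ to injectivity on the diagonal $\DD$, and then use the faithful conditional expectation $\Phi_\DD$ together with carefully chosen diagonal projections to ``amplify'' a positive element of $C^*(S)$ into $\DD$, where $\pi$ will be known to be isometric. The forward implication is immediate: if $\pi$ is injective, then so is $\pi\vert_\DD$, so Proposition~\ref{prop: part (i) independent} (III) delivers $Q^E_{F,\emptyset}\neq 0$ for every non-empty finite $F\subset \JJ(S)\setminus\{S\}$; since $pS\neq S$ precisely when $p\in S\setminus S^*$ (using left cancellation, and the standing assumption when $S^*=\emptyset$), this gives \eqref{eq: key to faithfulness independent version}.

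For the converse, assume \eqref{eq: key to faithfulness independent version}. The equivalence (I)$\Leftrightarrow$(III) of Proposition~\ref{prop: part (i) independent} shows $\pi\vert_\DD$ is an isomorphism, hence isometric. Take $a\in C^*(S)\setminus\{0\}$ and set $b=a^*a\geq 0$; since $\Phi_\DD$ is faithful we have $\|\Phi_\DD(b)\|>0$, and it suffices to prove $\|\pi(b)\|\geq\|\Phi_\DD(b)\|$. Given $\varepsilon>0$, invoke Lemma~\ref{lem: spanning family for algebra} to approximate $b^{1/2}$ by some $x=\sum_{p\in F}\mu_p v_p$ in the linear span, so that $t_F:=x^*x\geq 0$ is a finite sum of monomials $v_pv_q^*$ with $\|t_F-b\|<\varepsilon$ and $\|t_{F,\DD}-\Phi_\DD(b)\|<\varepsilon$, where $t_{F,\DD}:=\Phi_\DD(t_F)$.

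When $|S^*|\leq 1$, Lemma~\ref{lem: projection for trivial S*--D} produces a non-zero projection $Q^e_{F_1,A}\in\DD$ with $Q^e_{F_1,A}\,t_F\,Q^e_{F_1,A}=\|t_{F,\DD}\|\,Q^e_{F_1,A}$. Applying the isometric $\pi\vert_\DD$ yields $\|\pi(t_F)\|\geq\|\pi(Q^e_{F_1,A}t_FQ^e_{F_1,A})\|=\|t_{F,\DD}\|$, and letting $\varepsilon\to 0$ gives the desired inequality.

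The case $|S^*|>1$ is where condition (D2) is essential, and this is the main obstacle. Here Lemma~\ref{lem: second projection--D} only yields $Q^e_{F_1,A}\,t_F\,Q^e_{F_1,A}=Q^e_{F_1,A}(t_{F,\DD}+t_{F,3})Q^e_{F_1,A}$, and by \eqref{def:TF3Q1--D} the cutdown of $t_{F,3}$ has the form $\sum_{x}\mu_x v_{\sigma_A}v_x v_{\sigma_A}^*$ for a finite set of units $x\in S^*\setminus\{1_S\}$. Since $Q^e_{F_1,A}\neq 0$, the set $\sigma_AS\cap\bigl(S\setminus\bigcup_{q\in F_1\setminus A}qS\bigr)$ is non-empty, so (D2) applies with $s_0=\sigma_A$ and $s_1=\sigma_A$. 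I apply (D2) iteratively, once for each unit $x$ appearing, producing a nested chain $\sigma_A=s_1^{(0)},s_1^{(1)},\dots,s_*$ in $\sigma_AS$ with each $s_1^{(i+1)}\in s_1^{(i)}S$. Monotonicity of the right ideals $\sigma_A^{-1}s_1^{(i)}S$ in $i$ preserves the disjointness conditions obtained at earlier stages, so the final element $s_*=\sigma_A u$ satisfies $\sigma_A^{-1}s_*S\cap x\sigma_A^{-1}s_*S=uS\cap xuS=\emptyset$ simultaneously for every $x$ appearing, while $s_*S\cap(S\setminus\bigcup qS)\neq\emptyset$. Set $Q':=e_{s_*S}Q^e_{F_1,A}\in\DD$; then $Q'\neq 0$ by Lemma~\ref{lem:lambda-iso-diagonal-general} applied to the corresponding non-empty intersection. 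A direct computation using (L2) gives
\[e_{s_*S}\,v_{\sigma_A}v_xv_{\sigma_A}^*\,e_{s_*S}=v_{\sigma_A}e_{uS\cap xuS}v_xv_{\sigma_A}^*=0\]
for each $x$ above, and commuting the diagonal projections then yields $Q' t_{F,3}Q'=0$. Combined with $Q't_{F,\DD}Q'=\|t_{F,\DD}\|Q'$ inherited from Lemma~\ref{lem: second projection--D}, we obtain $Q't_FQ'=\|t_{F,\DD}\|Q'$, and the argument finishes as in the first case. The hard part is precisely this simultaneous management of all units $x$ appearing in $t_{F,3}$ together with the preservation of $Q^e_{F_1,A}\neq 0$; the iterative use of (D2) exploits the monotonicity of the disjointness conditions under refinement.
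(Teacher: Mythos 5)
Your argument is correct and is essentially the paper's own proof: the reduction of \eqref{eq: key to faithfulness independent version} to injectivity of $\pi|_\DD$ via Proposition~\ref{prop: part (i) independent}, the cut-down by $Q^e_{F_1,A}$ from Lemma~\ref{lem: second projection--D}, and the iterated application of (D2) with the nested chain in $\sigma_A S$ to annihilate $t_{F,3}$ reproduce exactly the paper's Lemma~\ref{lem: projection for non-trivial units--D} and Proposition~\ref{prop:extend-to-contraction-using-D}, and your direct estimate $\|\pi(b)\|\geq\|\Phi_\DD(b)\|$ is just an unpacking of the paper's commutative diagram \eqref{com:diagram DD}. One small slip: $b^{1/2}$ cannot be approximated by $\sum_{p\in F}\mu_p v_p$, since the monomials $v_p$ alone do not span a dense subspace of $C^*(S)$; you should instead approximate $b^{1/2}$ by a general element $x$ of the dense $*$-subalgebra $\operatorname{span}\{v_pv_q^*\mid p,q\in S\}$ provided by Lemma~\ref{lem: spanning family for algebra}, after which $t_F=x^*x$ is still a positive finite combination of the $v_pv_q^*$ and the remainder of your argument goes through unchanged.
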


\begin{remark}
\textnormal{(a)} We observe that, for a quasi-lattice ordered pair $(G,S)$ in the sense of
\cite{Ni}, the semigroup $S$ is right LCM with $S^*=\{1_S\}$. Thus part (3) of the theorem  recovers
\cite[Theorem 3.7]{LaRa}.

\textnormal{(b)} Note that Theorem~\ref{thm: using the diagonal} does not apply to the case where $S$ is a non-trivial group ($S = \{1_S\}$ amounts to $C^*(S) \cong \C$). The reason is that $S^* = S$ directs us to part (2) of Theorem~\ref{thm: using the diagonal} and (D2) fails in the group case for $F = \emptyset$: indeed, there exists $x \in S^*\setminus\{1_S\}$, but for every $p \in S$ we get $xpS \cap pS = S \neq \emptyset$.

\textnormal{(c)}
The hypotheses of part (1) of the theorem are satisfied in the case of the semigroup from
Example~\ref{ex:rightLCM no units} because
$\Phi_\DD$ is a faithful expectation; to see this, note that
$\mathbb{N}\setminus \{1\}$ embeds in $\mathbb{Q}_+^*\setminus\{1\}$, hence in
$\mathbb{Q}_+^*$, and the latter admits a dual action on $C^*(S)$ by
\cite[Remark 3.7]{LaRa}. Semigroups satisfying condition (D2) will be described in Examples~\ref{ex: GxP sat D2 but not D3 I} and
\ref{ex: GxP sat D2 but not D3 II}.
\end{remark}

\noindent The proof of this theorem requires some preparation. Note that by
Proposition~\ref{prop: part (i) independent}, condition \eqref{eq: key to
faithfulness independent version} is equivalent to injectivity of $\pi$ on
$\DD$. Relying on this equivalence, the key step in proving Theorem~\ref{thm:
using the diagonal} is the following intermediate result:

\begin{prop}\label{prop:extend-to-contraction-using-D}
Let $S$ be a cancellative right LCM semigroup, and let $(V_p)_{p\in S}$ and
$(E_{pS})_{p\in S}$ be families of
isometries and projections in a $C^*$-algebra $B$ satisfying (L1)--(L4). Let
$\pi$ be the associated $*$-homomorphism from $C^*(S)$ to $B$. Assume that one of
the following conditions holds:
\begin{enumerate}[(1)]
\item $|S^*| \leq 1$.
\item $|S^*|>1$ and $S$ satisfies condition (D2).
\end{enumerate}
If $\pi$ is injective on $\DD$, then the map
\[\sum_{p,q\in F}\lambda_{p,q}V_pV_q^*\mapsto \sum_{p\in
F}\lambda_{p,p}V_pV_p^*,\]
where $F \subset S$ is finite and $\lambda_{p, q}\in \mathbb{C}$, is
contractive, and hence extends to a contraction $\Phi$ of $\pi(C^*(S))$ onto
$\pi(\DD)$. 
\end{prop}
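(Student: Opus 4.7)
\emph{Strategy.} By Lemma~\ref{lem: spanning family for algebra}, $\pi(C^*(S))$ is the norm-closure of the linear span of the monomials $\pi(v_pv_q^*)$, so it will suffice to prove contractivity of the prescribed assignment on this spanning set; continuous linear extension then yields $\Phi$, whose range is closed in $\pi(\DD)$ and manifestly contains every finite combination of the $E_{pS}=\pi(e_{pS})$, hence equals $\pi(\DD)$. Since $\pi$ is injective on $\DD$, Proposition~\ref{prop: part (i) independent} makes $\pi|_{\DD}$ isometric, so $\|\pi(t_{F,\DD})\|=\|t_{F,\DD}\|$ and the task reduces to the estimate $\|t_{F,\DD}\|\leq\|\pi(t_F)\|$ for every finite sum $t_F=\sum_{p,q\in F}\lambda_{p,q}v_pv_q^*$. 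The plan is to exhibit a non-zero projection $R\in\DD$ with $Rt_FR=Rt_{F,\DD}R$ and $\|Rt_{F,\DD}R\|=\|t_{F,\DD}\|$; applying $\pi$ and again using that $\pi|_{\DD}$ is isometric on the element $Rt_{F,\DD}R\in\DD$ will then give $\|t_{F,\DD}\|=\|\pi(R)\pi(t_F)\pi(R)\|\leq\|\pi(t_F)\|$.

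\emph{Case (1).} Since $S$ is cancellative with $|S^*|\leq 1$, the remark preceding Lemma~\ref{lem: decomposition of t_F--D} forces $t_{F,3}=0$. Lemma~\ref{lem: second projection--D} (combined with Lemma~\ref{lem: projection for trivial S*--D} when $S^*=\emptyset$) then supplies $R=Q^e_{F_1,A}$ with all required properties directly.

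\emph{Case (2).} Lemma~\ref{lem: second projection--D} yields a non-zero $Q_0:=Q^e_{F_1,A}=e_{\sigma_AS}\prod_i(1-e_{s_iS})$ with $Q_0t_FQ_0=Q_0(t_{F,\DD}+t_{F,3})Q_0$, and by Lemma~\ref{lem: decomposition of t_F--D} the correction $Q_0t_{F,3}Q_0$ decomposes into a finite linear combination of monomials $Q_0v_{\sigma_A}v_xv_{\sigma_A}^*Q_0$ indexed by some finite set $X\subset S^*\setminus\{1_S\}$. Enumerate $X=\{x_1,\dots,x_m\}$, put $s^{(0)}:=\sigma_A$, and iteratively apply (D2) with data $s_0=\sigma_A$, $s_1=s^{(k-1)}$, $x=x_k$, and the finite subset of $S$ in the role of $F$ taken to be the collection $\{s_i\}$ from the above expansion of $Q_0$. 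This produces $s^{(k)}\in s^{(k-1)}S$ such that (a) $s^{(k)}S\cap(S\setminus\bigcup_is_iS)\neq\emptyset$, which is exactly the hypothesis needed at the next step and keeps $Q_k:=e_{s^{(k)}S}Q_0$ non-zero, and (b) $\sigma_A^{-1}s^{(k)}S\cap x_k\sigma_A^{-1}s^{(k)}S=\emptyset$, which via (L1), (L2), (L4) yields the vanishing $e_{s^{(k)}S}v_{\sigma_A}v_{x_k}v_{\sigma_A}^*e_{s^{(k)}S}=0$. Setting $R:=Q_m$, the estimates $R\leq e_{s^{(k)}S}$ for every $k$ propagate this vanishing to $Rv_{\sigma_A}v_{x_k}v_{\sigma_A}^*R=0$ for all $k$, whence $Rt_{F,3}R=0$ and $Rt_FR=Rt_{F,\DD}R$. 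Finally, the inclusions $R\leq e_{\sigma_AS}\leq e_{pS}$ for $p\in A$ and $R\leq Q_0\leq 1-e_{\sigma_AS\cap qS}$ for $q\in F\setminus A$ collapse $Rt_{F,\DD}R$ to $\bigl(\sum_{p\in A}\lambda_{p,p}\bigr)R$, whose norm equals $\|t_{F,\DD}\|$ by the maximising choice of $A$ from Lemma~\ref{lem: basic projection--D}.

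\emph{Main obstacle.} The delicate part is the iteration of (D2) in case~(2): one must verify that conclusion~(a) at step $k$ is precisely the hypothesis of (D2) needed at step $k+1$, so that every $Q_k$ stays non-zero, while also ensuring that the final $Q_m$---typically a proper subprojection of $Q_0$---still realises the full norm $\|t_{F,\DD}\|$ through the maximising index set $A$ inherited from Lemma~\ref{lem: basic projection--D}. Conditions (D1) and strong effectiveness would achieve (b) in a single step for each $x_k$, but their absence in (D2) in the presence of the extra finite set $F$ in the hypothesis is exactly what makes the inductive construction of the $s^{(k)}$'s compatible with keeping the relevant projection non-zero.
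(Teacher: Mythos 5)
Your proposal is correct and follows essentially the same route as the paper: reduce to the estimate $\|t_{F,\DD}\|\leq\|\pi(t_F)\|$ via the isometry of $\pi|_{\DD}$, use Lemma~\ref{lem: basic projection--D} and Lemma~\ref{lem: second projection--D} to kill $t_{F,1}$ and $t_{F,2}$, and in case (2) iterate (D2) along a nested chain $s^{(k)}\in s^{(k-1)}S$ to annihilate $t_{F,3}$ while keeping the cut-down projection non-zero. Your inline construction of the $s^{(k)}$ is precisely the content of the paper's Lemma~\ref{lem: projection for non-trivial units--D}, which the paper simply cites at this point.
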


\noindent One consequence of this proposition is that when $\pi$ is the identity
homomorphism, we obtain a contractive map from $C^*(S)$ onto $\DD$ which is
nothing but the conditional expectation $\Phi_\DD$ from Theorem~\ref{thm:
using the diagonal}.

Thus it remains to prove Proposition~\ref{prop:extend-to-contraction-using-D}. The established strategy is to express $\Phi$ on finite linear combinations of the spanning family $(v_pv_q^*)_{p,q \in S}$ as a cut-down by a suitable projection that will depend on the given linear combination. Fix therefore finite combinations $t_F\in C^*(S)$ and $t_{F, \DD}\in \DD$ as in \eqref{eq:tF-tFD}.  In view of our aim we assume, without loss of generality, that $t_{F,\DD} \neq 0$ holds (otherwise $0$ is a suitable projection). Most of the preparation needed to construct $\Phi$ was done in section~\ref{subsection-LR}. For case (1), Lemma \ref{lem: projection for trivial S*--D} will suffice.

Condition (D2) is relevant when there are non-trivial elements in $S^*$. These units will appear in the sum from \eqref{def:TF3Q1--D} due to right cancellation in $S$: indeed, for $p,q \in A, p \neq q$ satisfying $p_AS = q_AS$ it follows that $pp_A = qp_Ax$ for some $x\in S^*$. By right cancellation,  necessarily $x \neq 1_S$. Thus there are $n \in \N$, $x_1,\dots,x_n \in S^*{\setminus}\{1_S\}$ and $\lambda_1, \dots, \lambda_n \in \C$ such that
\begin{equation}
\begin{array}{lcl}
e_{\sigma_AS}t_{F,3}e_{\sigma_AS}
&=& \sum\limits_{i=1}^n
\lambda_{i}v_{\sigma_A}v_{x_i}v_{\sigma_A}^*.\label{def:TF3Q1mod--D}
\end{array}
\end{equation}

\begin{lemma}\label{lem: projection for non-trivial units--D}
Let $S$ be a cancellative right LCM semigroup such that  $|S^*|>1$  and $S$
satisfies condition (D2). Let $A, F_1$ be as in Lemma~\ref{lem: second
projection--D}. Then there exists $p_F\in \sigma_AS$ such that $e_F:=e_{p_FS}$
satisfies
\begin{enumerate}
\item[(i)] $e_FQ_{F_1{\setminus}A,\emptyset}^e \neq 0$.
\item[(ii)] $\Vert e_{F}Q_{F_1{\setminus}A,\emptyset}^e t_{F}
e_{F}Q_{F_1{\setminus}A,\emptyset}^e\Vert = \|t_{F,\DD}\|.$
\item[(iii)]If $t_{F,\DD}$ is positive, then
$e_{F}Q_{F_1{\setminus}A,\emptyset}^e t_{F}
e_{F}Q_{F_1{\setminus}A,\emptyset}^e=\|t_{F,\DD}\|e_{F}Q_{F_1{\setminus}A,\emptyset}^e.$
\end{enumerate}
\end{lemma}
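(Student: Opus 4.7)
The plan is to construct $p_F \in \sigma_A S$ by iterated application of condition (D2), one step for each non-trivial unit $x_i$ appearing in \eqref{def:TF3Q1mod--D}. Writing $p_F = \sigma_A p_F'$, I will arrange that $p_F' S \cap x_i p_F' S = \emptyset$ for all $i = 1, \dots, n$, while keeping $p_F S$ meeting the complement $S \setminus \bigcup_{q \in F_1 \setminus A} qS$. The resulting projection $e_F := e_{p_F S}$, combined with $Q_{F_1 \setminus A, \emptyset}^e$, will then kill $t_{F,3}$ on the relevant corner while sitting below $Q_{F_1,A}^e$, so that the diagonal estimate of Lemma~\ref{lem: second projection--D} transfers.

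Non-vanishing of $Q_{F_1,A}^e$ from Lemma~\ref{lem: second projection--D}, together with Lemma~\ref{lem:lambda-iso-diagonal-general} and Proposition~\ref{prop: part (i) independent}, gives $\sigma_A S \cap (S \setminus \bigcup_{q \in F_1 \setminus A} qS) \neq \emptyset$. I therefore apply (D2) $n$ times with $s_0 = \sigma_A$ and finite set $F_1 \setminus A$, starting from $s_1 = \sigma_A$ and using $x = x_k$ at stage $k$, to produce $s_2, \dots, s_{n+1}$ such that $s_{k+1} \in s_k S$, $s_{k+1} S$ still meets the complement of $\bigcup_{q \in F_1 \setminus A} qS$, and $\sigma_A^{-1} s_{k+1} S \cap x_k \sigma_A^{-1} s_{k+1} S = \emptyset$. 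Writing $s_k = \sigma_A t_k$ via left cancellation, the inclusion $s_{k+1} \in s_k S$ forces $t_{k+1} \in t_k S$, so each disjointness condition $t_j S \cap x_j t_j S = \emptyset$ obtained at stage $j$ is inherited by $t_{k+1}$ for all $k \geq j$. Setting $p_F := s_{n+1}$ and $p_F' := t_{n+1}$ furnishes the desired element.

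With $p_F$ in hand, (i) follows from $p_F S \cap (S \setminus \bigcup_{q \in F_1 \setminus A} qS) \neq \emptyset$: under the isomorphism $\lambda\vert_\DD$ from Lemma~\ref{lem:lambda-iso-diagonal-general}, this makes $e_F Q_{F_1 \setminus A, \emptyset}^e$ correspond to the non-zero projection onto $\ell^2(p_F S \setminus \bigcup_{q \in F_1 \setminus A} qS)$ in $\DD_r$. For (ii) and (iii), the containment $p_F \in \sigma_A S$ gives $e_F \leq e_{\sigma_A S}$, hence $e_F Q_{F_1 \setminus A, \emptyset}^e \leq Q_{F_1,A}^e$, so by Lemma~\ref{lem: second projection--D} the sandwich of $t_F$ by $e_F Q_{F_1 \setminus A, \emptyset}^e$ reduces to that of $t_{F,\DD} + t_{F,3}$. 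The crucial cancellation is
\[e_F\, v_{\sigma_A} v_{x_i} v_{\sigma_A}^*\, e_F = v_{\sigma_A}\, e_{p_F' S \cap x_i p_F' S}\, v_{x_i}\, v_{\sigma_A}^* = 0,\]
using $e_F = v_{\sigma_A} e_{p_F' S} v_{\sigma_A}^*$; commuting $e_F$ past $Q_{F_1 \setminus A, \emptyset}^e$ inside $\DD$ then yields $e_F Q_{F_1 \setminus A, \emptyset}^e\, t_{F,3}\, e_F Q_{F_1 \setminus A, \emptyset}^e = 0$. On the other hand, $e_F Q_{F_1 \setminus A, \emptyset}^e \leq Q_{F,A}^e$ forces $t_{F,\DD}$ to act on this subprojection as the scalar $\sum_{p \in A} \lambda_{p,p}$, whose modulus equals $\|t_{F,\DD}\|$ by the choice of $A$ in Lemma~\ref{lem: basic projection--D}; this yields (ii), and when $t_{F,\DD}$ is positive this scalar coincides with $\|t_{F,\DD}\|$ itself, giving (iii).

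The main obstacle will be to achieve the $n$ disjointness conditions $p_F' S \cap x_i p_F' S = \emptyset$ \emph{simultaneously} at a single $p_F$, rather than one unit at a time. This is resolved by the monotonicity $t_{k+1} S \subseteq t_k S$ inherited from left cancellation: each successive application of (D2) refines the element inside a smaller principal right ideal, accumulating every previously established disjointness condition while continuing to sit in the complement required for (i).
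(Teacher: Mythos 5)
Your proposal is correct and follows essentially the same route as the paper: iterate (D2) with $s_0=\sigma_A$ and the finite set $F_1\setminus A$, once for each unit $x_i$ appearing in \eqref{def:TF3Q1mod--D}, using the monotonicity $s_{k+1}\in s_kS$ to accumulate the disjointness conditions $\sigma_A^{-1}s_{k}S\cap x_i\sigma_A^{-1}s_{k}S=\emptyset$, and then conclude via $e_F\,v_{\sigma_A}v_{x_i}v_{\sigma_A}^*\,e_F=0$ together with Lemma~\ref{lem: second projection--D}. The only differences are cosmetic (indexing of the $s_k$, and your explicit spelling-out of the scalar action of $t_{F,\DD}$ on the subprojection, which the paper leaves to the cited lemma).
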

\begin{proof}
 Since $\JJ(S)$ is independent, $Q_{F_1,A}^e
\neq 0$ is equivalent to $\sigma_AS \cap \biggl(S{\setminus}\bigcup\limits_{q
\in F_1\setminus A}{qS}\biggr) \neq \emptyset$. Applying  (D2) to $\sigma_A$ in place of $s_0, s_1$, the unit $x_1\in S^*{\setminus}\{1_S\}$ and the finite set $F_1\setminus A$ gives
an element $s_2 \in \sigma_AS$ such that $x_1\sigma_A^{-1}s_2S\cap
\sigma_A^{-1}s_2S=\emptyset$ and $s_2S$ has non-empty intersection with
$S{\setminus}\bigcup\limits_{q \in F_1{\setminus}A} qS$. Next, we apply (D2)
to $\sigma_A$ as $s_0$, $s_2$ in place of $s_1$, the unit $x_2$ and $F_1\setminus A$ resulting
in an element $s_3 \in s_2S$ such that $x_2\sigma_A^{-1}s_3S \cap
\sigma_A^{-1}s_3S = \emptyset$ and $s_3S$ has non-empty intersection with
$S{\setminus}\bigcup\limits_{q \in F_1{\setminus}A} qS$. Note that we have
\[x_1\sigma_A^{-1}s_3S \cap \sigma_A^{-1}s_3S \stackrel{s_3 \in s_2S}{\subset}
x_1\sigma_A^{-1}s_2S \cap \sigma_A^{-1}s_2S = \emptyset.\]
Thus, proceeding inductively, we get $s_n \in \sigma_AS$ such that $s_nS$ has
non-empty intersection with $S{\setminus}\bigcup\limits_{q \in
F_1{\setminus}A} qS$ and $x_i\sigma_A^{-1}s_nS\cap
\sigma_A^{-1}s_nS=\emptyset$ for all $i = 1,\dots,n$. This translates to
$Q_{F_1,A}^e \geq e_{s_nS}Q_{F_1{\setminus}A,\emptyset}^e \neq 0$ and
$e_{s_nS}t_{F,3}e_{s_nS} = 0$. Let $p_F := s_n$. Since
\[
e_{F}Q_{F_1{\setminus}A,\emptyset}^e t_{F}
e_{F}Q_{F_1{\setminus}A,\emptyset}^e=e_{F}Q_{F_1{\setminus}A,\emptyset}^e
t_{F, \DD} e_{F}Q_{F_1{\setminus}A,\emptyset}^e,
\]
an application of Lemma~\ref{lem: second projection--D} shows that $p_F$
satisfies (i)$-$(iii).
\end{proof}

\begin{proof}[Proof of Proposition \ref{prop:extend-to-contraction-using-D}]
For any finite linear combination $T_F \subset \text{span}\{~V_pV_q^*~|~p,q
\in S~\}$, consider the corresponding element $t_F \in C^*(S)$. For case
(1), we use Lemma~\ref{lem: projection for trivial S*--D} to obtain a
non-zero projection  $Q_{F_1,A}^e \in \DD$ that satisfies
\[\Vert Q_{F_1,A}^e t_F Q_{F_1,A}^e\Vert = \|\Phi(t_F)\|.\]
Since $\pi_{\DD}$ is injective and $(V_p)_{p\in S},(E_{pS})_{p\in S}$ are
families of
isometries and projections, respectively, satisfying (L1)--(L4), we get
\[\Vert Q_{F_1,A}^E T_F Q_{F_1,A}^E \Vert= \|\Phi(T_F)\|.\]
As $Q_{F_1,A}^E \neq 0$ is a projection, we get
\[\|\Phi(T_F)\| = \|Q_{F_1,A}^E T_F Q_{F_1,A}^E\| \leq \|T_F\|,\]
so $\Phi$ is contractive on a dense subset of $\pi(C^*(S))$. By standard
arguments, it extends to a contraction from $\pi(C^*(S))$ to $\pi(\DD)$.

For case (2), run the same argument with $e_{F}Q_{F_1,A}^e$ given by
Lemma~\ref{lem: projection for non-trivial units--D} as the suitable
replacement for $Q_{F_1,A}^e$.
\end{proof}

\begin{proof}[Proof of Theorem \ref{thm: using the diagonal}]
Since $S$ is a right LCM semigroup, Proposition~\ref{prop: part (i) independent} implies
that condition~\eqref{eq: key to faithfulness independent version} is
equivalent to injectivity of $\pi$ on $\DD$.
Obviously, $\pi|_{\DD}$ is injective whenever $\pi$ is injective, showing the
forward implication in the theorem. To prove the reverse implication, we apply
Proposition~\ref{prop:extend-to-contraction-using-D} to obtain the following
commutative diagram.
\begin{equation}\label{com:diagram DD}
\begin{tikzpicture}[>=stealth, xscale=2, yscale=1.5]
\node (c) at (0,-2) {$\DD$};
\node (b) at (2,0) {$\pi(C^*(S))$};
\node (a) at (0,0) {$C^*(S)$}
edge[->](b)
edge[->] (c);
\node (d) at (2,-2) {$\pi|_{\DD}(\DD)$}
edge[<-] (b)
edge[<-] (c);
\node at (1,0.2) {$\pi$};
\node at (-0.2,-1) {$\Phi_\DD$};
\node at (2.2,-1) {$\Phi$};
\node at (1,-2.2) {$\pi|_{\DD}$};
\end{tikzpicture}
\end{equation}
Now, if $a \in C^*(S)_+, a \neq 0$, then $\Phi{\circ}\pi(a) =
\pi|_\DD{\circ}\Phi_\DD(a) \neq 0$ as $\Phi_\DD$ is faithful and $\pi|_\DD$ is
injective. Thus, we have $\pi(a) \neq 0$. Since injectivity of *-homomorphisms
can be detected on positive elements, $\pi$ is seen to be injective.
\end{proof}

\section{\texorpdfstring{Purely infinite simple $C^*(S)$ arising from right LCM semigroups}{Purely infinite simple C*(S) arising from right LCM semigroups}}\label{section:pisimple}
\noindent Suppose that $S$ is a right LCM semigroup. Consider the following refinement of condition (D2):
\begin{enumerate}
\item[(D3)] If $s\in S$ and  $F$ is a finite subset of $S$ with $sS \cap \big(S\setminus
\bigcup\limits_{q \in F} {qS}\big) \neq \emptyset$, then there is $s' \in sS$
such that $s'S \cap qS = \emptyset$ for all $q \in F$.
\end{enumerate}

Whenever $F \neq \emptyset$, we will denote its elements by $q_1,\dots,q_n$. In section~\ref{sec: examples} we will see examples of semigroups satisfying conditions (D3) and (D2). To clarify the relationship between (D2) and (D3), we
make the following observation:

\begin{lemma}\label{lem:str eff+D1+D3 gives D2}
Let $S$ be a right LCM semigroup with $S^*\neq \emptyset$. If the action
$S^*\curvearrowright\JJ(S)$ is strongly effective and $S$ satisfies (D1) and
(D3), then (D2) holds.
\end{lemma}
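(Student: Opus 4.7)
The plan is to build $s_2$ in two stages: first use (D3) to escape the forbidden set $F$, and then combine strong effectiveness with (D1) to arrange the disjointness from the $x$-translate. The key observation is that the disjointness condition in (D2) is phrased after stripping off the common prefix $s_0$, so it suffices to secure this disjointness at the level of the element $t$ for which $s' = s_0 t$, and then lift it back up via left cancellation.

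First I would apply (D3) to $s_1$ and $F$. The hypothesis $s_1 S \cap \bigl(S \setminus \bigcup_{q \in F} qS\bigr) \neq \emptyset$ is precisely the premise of (D3), which produces $s' \in s_1 S$ with $s'S \cap qS = \emptyset$ for every $q \in F$; equivalently, $s'S \subseteq S \setminus \bigcup_{q \in F} qS$. Since $s_1 \in s_0 S$ implies $s_1 S \subseteq s_0 S$, I can then write $s' = s_0 t$ for some $t \in S$.

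Next I would apply strong effectiveness of the action $S^* \curvearrowright \JJ(S)$ to the non-trivial unit $x$ and to the element $t$, obtaining $t' \in tS$ with $xt'S \neq t'S$. The contrapositive of (D1) immediately upgrades this to $xt'S \cap t'S = \emptyset$. Setting $s_2 := s_0 t'$, the inclusion $t' \in tS$ gives $s_2 \in s_0 tS = s'S \subseteq s_1 S$, and $s_2 S \subseteq s'S \subseteq S \setminus \bigcup_{q \in F} qS$ delivers the first clause of (D2). Left cancellation in $S$ yields $s_0^{-1}(s_2 S) = t'S$, so the second clause reduces to $t'S \cap xt'S = \emptyset$, which was arranged in the previous step.

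I do not anticipate a serious obstacle: the proof is a clean decomposition of (D2) into its two requirements, each handled by exactly one of the hypotheses, with left cancellation bridging the $s_0$-quotient and the ambient semigroup. The only point deserving care is to verify that the element $t'$ produced by strong effectiveness remains inside $tS$, so that $s_2$ inherits its membership in $s'S$ and hence in the complement of $\bigcup_{q \in F} qS$; but this is built into the statement of strong effectiveness.
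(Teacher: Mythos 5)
Your argument is correct and is essentially the paper's own proof: first (D3) produces $s'=s_0t\in s_1S$ avoiding all $q\in F$, then strong effectiveness plus the contrapositive of (D1) applied to $x$ and $t$ yields $t'\in tS$ with $xt'S\cap t'S=\emptyset$, and $s_2=s_0t'$ works after cancelling $s_0$. The only cosmetic difference is that the paper dispatches the case $F=\emptyset$ separately via Remark~\ref{rmk:consequence-strongly-effective}, whereas your argument treats it uniformly; both are fine.
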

\begin{proof}
We saw in Remark~\ref{rmk:consequence-strongly-effective} that the condition (D2) where $F=\emptyset$ is satisfied
when $S^*\curvearrowright\JJ(S)$ is strongly effective and $S$ satisfies (D1). Thus it remains to prove (D2) in case $F \neq \emptyset$. Let therefore  $s_0,q_1,\dots,q_n \in S$ and $s_1 \in s_0S$ with $s_1S \cap
\biggl(S{\setminus}\bigcup\limits_{i=1}^n q_iS\biggr) \neq \emptyset$.
Applying (D3) yields an element $s \in s_1S \subset s_0S$ such that $sS
\cap q_iS = \emptyset$ for $i=1,\dots, n$. Note that every $s_2 \in sS$
inherits this property, and therefore the first equation in (D2) is satisfied for such
elements. Let $x \in S^*{\setminus}\{1_S\}$ and write $s=s_0r$ for some $r\in S$. By strong
effectiveness and (D1) applied to $x$ and  $rS$ we get $s' \in rS$ with
the property that $s'S\cap xs'S=\emptyset$. Now $s_2=s_0s' \in sS \subset
s_1S$ satisfies $s_0^{-1}s_2S\cap xs_0^{-1}s_2S=s'S\cap xs'S=\emptyset$,
proving (D2).
\end{proof}

\noindent We note that (D1), (D3) and strong effectiveness are properties of a semigroup that can be more readily verified than (D2). The latter condition is quite close to the operator algebraic application it is designed for. Therefore, in Theorem~\ref{thm: simple and p.i.} we provide an independent proof for the last two sets of assumptions, even though (3) may be deduced from the proof in the case (2). First we need a lemma.

\begin{lemma}\label{lem: projection for simplicity (D1)+str.eff+(D3).--D}
Let $S$ be a cancellative right LCM semigroup such that $|S^*|>1$, the action
$S^*\curvearrowright\JJ(S)$ is strongly effective, and (D1), (D3) are
satisfied. Suppose $t_F\in C^*(S)$ and $t_{F,\DD}\in \DD$ are linear combinations as
in \eqref{eq:tF-tFD}, and assume that $t_F$ is a positive element in $C^*(S)$. Then $t_{F,\DD}$ is positive and there is $p_F \in S$ such that $e_F:=e_{p_FS}$ satisfies
\[e_{F} t_F e_{F} = \|t_{F,\DD}\|e_F.\]
\end{lemma}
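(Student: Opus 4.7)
The plan is to construct an element $p_F\in \sigma_AS$ whose range projection $e_F:=e_{p_FS}$ simultaneously achieves the norm on $t_{F,\DD}$ and annihilates the off-diagonal components $t_{F,1},t_{F,2},t_{F,3}$ of the decomposition in Lemma~\ref{lem: decomposition of t_F--D}. First, I apply Lemma~\ref{lem: basic projection--D}(iii) to the positive element $t_{F,\DD}$ to select a nonempty $A\subset F$ so that $Q_{F,A}^e\neq 0$, $Q_{F,A}^e t_{F,1} Q_{F,A}^e=0$, and $Q_{F,A}^e t_{F,\DD} Q_{F,A}^e = \|t_{F,\DD}\|\,Q_{F,A}^e$, and fix $\sigma_A\in S$ with $\bigcap_{p\in A}pS=\sigma_AS$. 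Lemma~\ref{lem: second projection--D} then supplies a finite $F_1\supset F$ with $Q_{F_1,A}^e\neq 0$ and $Q_{F_1,A}^e t_{F,2} Q_{F_1,A}^e=0$.

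Next, since $Q_{F_1,A}^e\neq 0$, Proposition~\ref{prop: part (i) independent} combined with Lemma~\ref{lem:lambda-iso-diagonal-general} shows that $\sigma_AS\cap\bigl(S\setminus\bigcup_{q\in F_1\setminus A}qS\bigr)\neq\emptyset$, so (D3) applied to $\sigma_A$ and $F_1\setminus A$ yields $s'=\sigma_Ar$ with $s'S\cap qS=\emptyset$ for all $q\in F_1\setminus A$. I then enumerate the units $x_1,\dots,x_n\in S^*\setminus\{1_S\}$ appearing in \eqref{def:TF3Q1mod--D}, and build $r=r_0,r_1,\dots,r_n$ inductively: strong effectiveness applied to $(x_i,r_{i-1})$ produces $r_i\in r_{i-1}S$ with $x_ir_iS\neq r_iS$, whereupon (D1) sharpens this to $x_ir_iS\cap r_iS=\emptyset$. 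Setting $r':=r_n\in rS$ and $p_F:=\sigma_Ar'\in s'S$, the inclusions $r'\in r_iS$ for every $i$ propagate to $x_ir'S\cap r'S\subset x_ir_iS\cap r_iS=\emptyset$.

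Finally, $p_FS\subset \sigma_AS$ together with $p_FS\cap qS\subset s'S\cap qS=\emptyset$ for $q\in F_1\setminus A$ gives $e_F\leq Q_{F_1,A}^e\leq Q_{F,A}^e$. Hence $e_Ft_{F,\DD}e_F=\|t_{F,\DD}\|\,e_F$, while $e_Ft_{F,1}e_F=0$ and $e_Ft_{F,2}e_F=0$. For the remaining piece, the identities $e_Fv_{\sigma_A}=v_{\sigma_A}e_{r'S}$ and $v_{x_i}e_{r'S}=e_{x_ir'S}v_{x_i}$ reduce each summand $e_Fv_{\sigma_A}v_{x_i}v_{\sigma_A}^*e_F$ to $v_{\sigma_A}e_{r'S\cap x_ir'S}v_{x_i}v_{\sigma_A}^*=0$, so $e_Ft_{F,3}e_F=0$ and the identity $e_Ft_Fe_F=\|t_{F,\DD}\|\,e_F$ follows. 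The principal obstacle is the coordinated choice of $p_F$: the recursive refinement of $r$ must preserve the containment in $rS$ dictated by (D3), so that the single element $p_F$ sits simultaneously inside $\sigma_AS$, is disjoint from every ideal indexed by $F_1\setminus A$, and is moved off its right ideal by each of the finitely many units $x_i$.
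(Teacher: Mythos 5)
Your proposal is correct and follows essentially the same route as the paper's proof: first Lemma~\ref{lem: second projection--D} (hence (D3) applied to $\sigma_A$ and $F_1\setminus A$) kills $t_{F,1}$ and $t_{F,2}$ and disposes of the ideals indexed by $F_1\setminus A$, and then an inductive application of strong effectiveness sharpened by (D1) produces a nested sequence $r_0S\supset r_1S\supset\dots\supset r_nS$ with $x_ir_iS\cap r_iS=\emptyset$, so that $p_F=\sigma_Ar_n$ annihilates each term of $t_{F,3}$ via $e_{r'S\cap x_ir'S}=0$. The verification that $e_F\leq Q_{F_1,A}^e$ and the cut-down computations match the paper's argument, so nothing further is needed.
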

\begin{proof}
As $t_{F,\DD}$ is the image of $t_F$ under the natural conditional expectation $C^*(S) \to \DD$, it is also positive. According to Lemma~\ref{lem: second projection--D}, there are finite subsets
$A,F_1$ of $S$ with $A \subset F \subset F_1$ and  $Q_{F_1,A}^e \neq 0$ such
that
\[
Q_{F_1,A}^e t_F Q_{F_1,A}^e = Q_{F_1,A}^e(t_{F,\DD} + t_{F,3})Q_{F_1,A}^e
\text{ and }
Q_{F_1,A}^e t_{F,\DD} Q_{F_1,A}^e = \|t_{F,\DD}\|Q_{F_1,A}^e.
\]
 Since $|S^*|>1$, the collection $\JJ(S)$ is independent by Corollary~\ref{cor: independence for monoids}. According to Proposition~\ref{prop: part (i) independent}, we have $Q_{F_1,A}^e \neq 0$ if and only if
 $\sigma_AS \cap \biggl(S{\setminus}\bigcup\limits_{q \in F_1{\setminus}A}
 qS\biggr) \neq \emptyset$.
By (D3), there is $p_0 \in \sigma_AS$ such that $p_0S \cap qS = \emptyset$ for
all $q \in F_1{\setminus}A$. Hence, we have
\[
e_{p_0S} t_F e_{p_0S} = e_{p_0S}(t_{F,\DD} + t_{F,3})e_{p_0S}
\text{ and }
e_{p_0S} t_{F,\DD} e_{p_0S} = \|t_{F,\DD}\|e_{p_0S}.
\]
Let $x_1,\dots,x_n \in S^*{\setminus}\{1_S\}$ be the invertible elements that
appear in \eqref{def:TF3Q1mod--D} and let $p_0'$ denote the element satisfying
$p_0 = \sigma_Ap_0'$.
Applying strong effectiveness to $p_0'$ and $x_1$ yields an element $p_1' \in
p_0'S$ satisfying $x_1p_1'S \neq p_1'S$. By (D1), this amounts to $x_1p_1'S
\cap p_1'S = \emptyset$. Proceeding inductively, where $p_i \in p_{i-1}S$ is
given as $p_i = \sigma_Ap_i'$ with $p_i' \in p_{i-1}'S$ satisfying $x_ip_i'S
\cap p_i'S = \emptyset$, we obtain $p_n \in \sigma_AS$ such that
\[\begin{array}{lclr}
e_{p_nS}v_{\sigma_A}v_{x_i}v_{\sigma_A}^*e_{p_nS} &=&
v_{p_n}v_{p_n'}^*v_{x_i}v_{p_n'}v_{p_n}^*\\
&=& v_{p_n}v_{p_n'}^*e_{p_i'S}v_{x_i}e_{p_i'S}v_{p_n'}v_{p_n}^* &(p_n'S
\subset p_i'S)\\
&=& v_{p_n}v_{p_n'}^*e_{p_i'S \cap x_ip_i'S}v_{x_i}v_{p_n'}v_{p_n}^*\\
&=&0
\end{array}\]
for all $i = 1,\dots,n$. Thus, $p_F:= p_n$ satisfies the claim of the lemma.
\end{proof}

\begin{thm}\label{thm: simple and p.i.}
Let $S$ be a cancellative right LCM semigroup such that
$\Phi_{\DD}:C^*(S)\to \DD$ is a faithful conditional expectation.
Assume that (D3) and one of the following conditions hold:
\begin{enumerate}[(1)]
\item $|S^*| \leq 1$.
\item $|S^*|>1$ and $S$ satisfies condition (D2).
\item $|S^*|>1$, $S$ satisfies condition (D1), and the action
$S^*\curvearrowright\JJ(S)$ is strongly effective.
\end{enumerate}
Then $C^*(S)$ is purely infinite and simple.
\end{thm}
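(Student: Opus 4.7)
My approach is to verify the standard unital characterization of pure infinite simplicity: show that for every $a\in C^*(S)_+\setminus\{0\}$ there exists $x\in C^*(S)$ with $x^*ax=1$. This yields simplicity (as $1$ then lies in the closed two-sided ideal generated by $a$) and, by the standard equivalence in the unital simple setting, pure infiniteness.

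I first reduce to a positive spanning approximant. Normalise $\|a\|=1$; faithfulness of $\Phi_\DD$ gives $c:=\|\Phi_\DD(a)\|>0$. Fix a small $\epsilon\in(0,c/10)$. By Lemma~\ref{lem: spanning family for algebra}, choose a self-adjoint finite combination $t_F$ in the spanning family with $\|a-t_F\|<\epsilon$. Since $a\geq 0$, we have $t_F\geq -\epsilon\cdot 1$, so the shifted element $t_F':=t_F+2\epsilon\,e_S$ is positive, is again of the form~\eqref{eq:tF-tFD}, satisfies $\|a-t_F'\|\leq 3\epsilon$, and its diagonal image is $t_{F,\DD}'=t_{F,\DD}+2\epsilon\,e_S\geq 0$ with $\|t_{F,\DD}'\|\geq c-3\epsilon$.

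Next I apply the projection lemma that matches the hypothesis at hand: Lemma~\ref{lem: projection for trivial S*--D} in case~(1), Lemma~\ref{lem: projection for non-trivial units--D} in case~(2), or Lemma~\ref{lem: projection for simplicity (D1)+str.eff+(D3).--D} in case~(3). Each produces a non-zero projection $P\in\DD$ satisfying $P\,t_F'\,P=\|t_{F,\DD}'\|\,P$. In cases~(1) and~(2), $P$ has the form $e_{rS}\prod_{q\in G}(1-e_{qS})$ for some $r\in S$ and finite $G\subset S$; in case~(3) the output is already $P=e_{p_FS}$. In the first two cases, Proposition~\ref{prop: part (i) independent} (whose independence hypothesis is supplied by Corollary~\ref{cor: independence for monoids}, combined with Lemma~\ref{lem:right LCM for unitisation} when $S^*=\emptyset$) translates $P\neq 0$ into $rS\cap\bigl(S\setminus\bigcup_{q\in G}qS\bigr)\neq\emptyset$; applying hypothesis~(D3) to $r$ then yields $p_F\in rS$ with $p_FS\cap qS=\emptyset$ for every $q\in G$, i.e.\ $e_{p_FS}\leq P$. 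Since $\DD$ is commutative, in all three cases we obtain
\[
e_{p_FS}\,t_F'\,e_{p_FS}=\|t_{F,\DD}'\|\,e_{p_FS}.
\]

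Finally, using $e_{p_FS}=v_{p_F}v_{p_F}^*$ and $v_{p_F}^*v_{p_F}=1$, conjugation and the estimate $\|a-t_F'\|\leq 3\epsilon$ yield
\[
\| v_{p_F}^*\,a\,v_{p_F} - \|t_{F,\DD}'\|\cdot 1 \| \leq 3\epsilon.
\]
For $\epsilon<c/10$, $b:=v_{p_F}^*\,a\,v_{p_F}$ is positive and invertible, and $x:=v_{p_F}\,b^{-1/2}$ satisfies $x^*ax=1$. The heavy lifting is already packaged into the three projection lemmas, which exploit (D1)--(D3) and strong effectiveness to turn a general spanning combination into a scalar multiple of a projection; beyond that, the only subtlety is that those lemmas provide exact scalar identities only for positive inputs, which is what forces the shift $t_F\mapsto t_F'=t_F+2\epsilon\,e_S$ in the approximation step.
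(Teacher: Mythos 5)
Your proof is correct and follows essentially the same route as the paper's: approximate $a$ by a positive spanning combination, invoke the three projection lemmas (reducing cases (1) and (2) to a single subprojection $e_{p_FS}$ via (D3), exactly as the paper does) to get $e_{p_FS}\,t'_F\,e_{p_FS}=\|t'_{F,\DD}\|e_{p_FS}$, and conjugate by the isometry $v_{p_F}$ to produce $x$ with $x^*ax=1$. Your only addition is the explicit shift $t_F\mapsto t_F+2\epsilon e_S$ to manufacture a positive approximant, a detail the paper glosses over.
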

\begin{proof}
Recall from Lemma~\ref{lem: spanning family for algebra} that the linear span of the elements $v_pv_q^*$ is dense in $C^*(S)$. Every element from this
linear span has the form $t_F = \sum\limits_{p,q \in F}\lambda_{p,q} v_pv_q^*$
for some finite $F \subset S$ and suitable $\lambda_{p,q} \in \C$. Moreover,
$\Phi_\DD(t_F) = t_{F,\DD} = \sum\limits_{p \in F}\lambda_{p,p} e_{pS}$.

Let $a \in C^*(S)$ be positive and non-zero, and let $\varepsilon > 0$. Choose a
positive linear combination $t_F$  that approximates $a$ up to within  $\varepsilon$. If
$\varepsilon$ is sufficiently small, we have $t_F \neq 0$ which we will assume
from now on. For the four different cases in the hypothesis of the theorem, we
will use different methods to obtain a suitable small projection
$e'_F:=e_{q_FS}$ that annihilates the off-diagonal terms of $t_F$ while
picking up the norm of the diagonal part: that is,
\[e'_{F} t_F e'_{F} = \|t_{F,\DD}\|e'_{F} = \|\Phi_\DD(t_{F})\|e'_{F}.\]
For case (1), we use Lemma~\ref{lem: projection for trivial S*--D},
and for case (2) Lemma~\ref{lem: projection for non-trivial units--D} to get a
finite subset $F_2 = F_1\setminus A$ of $S$ and an element $p_F\in S$ such
that $e_F=e_{p_FS}$ satisfies
\begin{enumerate}
\item[(i)] $e_{F}Q_{F_2,\emptyset}^e \neq 0$, and
\item[(ii)] $e_{F}Q_{F_2,\emptyset}^e t_{F} e_{F}Q_{F_2,\emptyset}^e =
\|t_{F,\DD}\|e_{F}Q_{F_2,\emptyset}^e.$
\end{enumerate}
 Since $S$ is right LCM, (i) translates to $p_FS \cap \biggl(S
 \setminus \bigcup\limits_{q \in F_2} qS\biggr) \neq \emptyset$ according to Lemma~\ref{lem:lambda-iso-diagonal-general}. So we can
 apply (D3) to get an element $q_F \in p_FS$ such that $q_FS \cap qS =
 \emptyset$ for all $q \in F_2$. By (L4) this gives $e_{q_FS} \leq
 e_FQ_{F_2,\emptyset}^e$. Now $e'_F:=e_{q_FS}$ satisfies $ e'_Ft_{F} e'_{F} =
 \|t_{F,\DD}\| e'_F$ by (ii). For case (3), the existence of such a projection $e'_F$ follows directly from
 Lemma~\ref{lem: projection for simplicity (D1)+str.eff+(D3).--D}.

 We have $\|\Phi_\DD(t_F)\| > 0$ since $\Phi_\DD$ is faithful. Thus,
 $e'_{F}t_Fe'_{F}=\|\Phi_\DD(t_F)\|e'_{F}$ is invertible in the corner
 $e'_{F}C^*(S)e'_{F}$. If $\|a-t_F\|$ is sufficiently small, this implies that $e'_{F}ae'_{F}$ is positive and invertible in $e'_{F}C^*(S)e'_{F}$ as well,
 because $\|\Phi_\DD(t_F)\|~\stackrel{\varepsilon \searrow
 0}{\longrightarrow}~\|\Phi_\DD(a)\|~>~0$. Hence, if we denote its positive inverse by
 $b$,	we get
\[\left(b^{\frac{1}{2}}v_{q_F}\right)^{*}e'_{F}ae'_{F}\left(b^{\frac{1}{2}}v_{q_F}\right)
 = v_{q_F}^*e'_{F}v_{q_F} = 1.\]
This implies that $C^*(S)$ is purely infinite and simple.
\end{proof}

\section{\texorpdfstring{Injectivity of the left regular representation of $C^*(S)$}{Injectivity of the left regular representation of C*(S)}}\label{section:injregular}
\noindent A major question of interest in \cite{Li1} and \cite{Li2} is to determine
conditions under which the left regular
representation $\lambda$ is an isomorphism $C^*(S) \cong C^*_r(S)$. In the
context of right LCM semigroups, we have identified some classes of semigroups
for which this isomorphism holds.

\begin{prop}\label{cor: isom full and reduced algebra - simple case} Assume
the hypotheses of Theorem~\ref{thm: simple and p.i.}. Then
the left regular representation $\lambda:C^*(S)\to C_r^*(S)$ is an isomorphism.
\end{prop}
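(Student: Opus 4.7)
The plan is essentially immediate from Theorem~\ref{thm: simple and p.i.} combined with a standard simplicity argument, so this will be a very short proof.

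First I would observe that the left regular representation $\lambda:C^*(S)\to C_r^*(S)$ is surjective by construction: by definition $\lambda(v_p)=V_p$, and $C_r^*(S)$ is generated (as a $C^*$-algebra) by the isometries $\{V_p\}_{p\in S}$. Thus the image of $\lambda$ contains the generators of $C_r^*(S)$, and surjectivity is automatic.

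For injectivity, I would invoke the conclusion of Theorem~\ref{thm: simple and p.i.}, which under the stated hypotheses says that $C^*(S)$ is purely infinite and simple. Since any $*$-homomorphism out of a simple $C^*$-algebra is either the zero map or injective, it only remains to note that $\lambda$ is nonzero. This is clear: the isometry $V_p\in\mathcal{L}(\ell^2(S))$ is nonzero for each $p\in S$, so $\lambda(v_p)\neq 0$ for any $p\in S$ (and $S$ is nonempty by our standing convention). Hence $\lambda$ is injective, and combined with surjectivity we conclude that $\lambda$ is a $*$-isomorphism.

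There is no real obstacle here: once simplicity of $C^*(S)$ has been established in Theorem~\ref{thm: simple and p.i.}, the present proposition is a direct corollary via the general principle that any surjective $*$-homomorphism from a simple $C^*$-algebra onto a nonzero $C^*$-algebra is an isomorphism.
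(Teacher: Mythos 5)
Your proof is correct and takes essentially the same route as the paper, which simply notes that the conclusion follows because $C^*(S)$ is simple under the hypotheses of Theorem~\ref{thm: simple and p.i.}; you have merely spelled out the standard details (surjectivity onto the generators, nonvanishing of $\lambda$, and the fact that a nonzero $*$-homomorphism out of a simple $C^*$-algebra is injective).
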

\begin{proof} The conclusion follows because in this case $C^*(S)$ is simple.
\end{proof}

\begin{thm}\label{cor:easy-amenability}
Assume that $S$ is a cancellative right LCM semigroup such that the
conditional expectation $\Phi_\DD$ is faithful. Then the left regular
representation $\lambda$ is an isomorphism from $C^*(S)$ onto  $C^*_r(S)$.
\end{thm}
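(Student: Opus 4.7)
The plan is to reduce injectivity of $\lambda$ to the already known injectivity of $\lambda|_\DD$ from Lemma~\ref{lem:lambda-iso-diagonal-general}, by exhibiting a commutative square that intertwines $\lambda$ with the two faithful conditional expectations onto the respective diagonals.

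First I would collect the ingredients. On one hand, Lemma~\ref{lem:lambda-iso-diagonal-general} gives that $\lambda$ restricts to an isomorphism $\lambda|_\DD\colon \DD\to\DD_r$ sending $e_{pS}$ to $E_{pS}$. On the other hand, the cancellative right LCM hypothesis on $S$ triggers Proposition~\ref{prop: faithful cond exp for the red algebra}, which yields a faithful conditional expectation $\Phi_{\DD,r}\colon C^*_r(S)\to\DD_r$ characterised on spanning elements by $\Phi_{\DD,r}(V_pV_q^*)=\delta_{p,q}V_pV_p^*$.

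Next I would verify the intertwining identity $\lambda\circ\Phi_\DD=\Phi_{\DD,r}\circ\lambda$. By Lemma~\ref{lem: spanning family for algebra}, $C^*(S)$ is densely spanned by elements of the form $v_pv_q^*$ (with $p,q\in S$ if $S$ has an identity, or $p,q\in \tilde S$ otherwise), and on each such element both compositions evaluate to $\delta_{p,q}V_pV_p^*$. Linearity and norm continuity then promote the equality to all of $C^*(S)$.

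With this square in hand, the conclusion is a short diagram chase: given $a\in C^*(S)_+$ with $\lambda(a)=0$, the intertwining relation yields $\lambda(\Phi_\DD(a))=\Phi_{\DD,r}(\lambda(a))=0$, injectivity of $\lambda|_\DD$ forces $\Phi_\DD(a)=0$, and faithfulness of $\Phi_\DD$ then delivers $a=0$. Injectivity of $\lambda$ on all of $C^*(S)$ follows since positive elements detect injectivity of a $*$-homomorphism, and surjectivity of $\lambda$ is automatic from its action on generators. There is no real obstacle in the argument itself; the technical work has already been absorbed into Lemma~\ref{lem:lambda-iso-diagonal-general} (which in the $S^*=\emptyset$ case routes through the unitisation $\tilde S$ via Lemma~\ref{lem:right LCM for unitisation}) and into Proposition~\ref{prop: faithful cond exp for the red algebra}, and the present theorem simply cashes these in against the faithfulness hypothesis on $\Phi_\DD$.
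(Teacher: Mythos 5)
Your proposal is correct and follows essentially the same route as the paper's own proof: both reduce to the isomorphism $\lambda|_\DD\colon\DD\to\DD_r$ from Lemma~\ref{lem:lambda-iso-diagonal-general}, the faithful expectation $\Phi_{\DD,r}$ from Proposition~\ref{prop: faithful cond exp for the red algebra}, and the intertwining identity $\lambda\circ\Phi_\DD=\Phi_{\DD,r}\circ\lambda$. The only difference is cosmetic (you argue by contrapositive and spell out the verification of the intertwining relation on the spanning family, which the paper leaves implicit).
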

\begin{proof} By Lemma~\ref{lem:lambda-iso-diagonal-general}, $\lambda$
restricts to an isomorphism $\DD \cong \DD_r$.
Then for $a \in C^*(S)_+, a \neq 0$, we have $\lambda|_\DD \circ
\Phi_\DD(a) \neq 0$. From $\lambda|_\DD \circ \Phi_\DD = \Phi_{\DD,r} \circ \lambda$
and faithfulness of $\Phi_{\DD,r}$ established in Proposition~\ref{prop: faithful cond exp for the red algebra} it follows that $\lambda(a) \neq 0$.
Hence, $\lambda$ is an isomorphism.
\end{proof}

\begin{example} By Theorem~\ref{cor:easy-amenability} and
Proposition~\ref{prop:left-inverse} it follows that $\lambda$ is an isomorphism in the case of
$S=G\rtimes_\theta P$ that is right LCM, right reversible, and satisfies that $S^{-1}S$ is
an amenable group.
\end{example}

\begin{remark}
There is an alternative approach to injectivity of $\lambda$ for certain
subsemigroups of amenable groups, see \cite{Li2}. We refer to  \cite[Section
4]{Li2} for the definition of the Toeplitz condition. Namely, if $S$ is a left
cancellative semigroup satisfying the conditions
 \begin{enumerate}[i)]
 \item $\JJ(S)$ is independent,
 \item $S$ embeds into an amenable group $H$ such that $S$ generates $H$ and
 $S \subset H$ satisfies the Toeplitz condition,
 \end{enumerate}
 then  $\lambda: C^*(S)\longrightarrow C^*_r(S)$ is an isomorphism, cf. the
 equivalence of (iii) and (v) in \cite[Theorem 6.1]{Li2} applied to $A = \C$
 (where (v) is valid because $H$ is amenable). The proof of
 \cite[Theorem 6.1]{Li2} depends on a relatively involved machinery of various
 crossed product constructions. The conclusion in
 Theorem~\ref{cor:easy-amenability} for right LCM semigroups  is obtained through an analysis solely of
 the semigroup $C^*$-algebra $C^*(S)$.
\end{remark}

\section{\texorpdfstring{A uniqueness result using $\CC_I$}{A uniqueness result using the inner core}}\label{section:usingcore}
\noindent In this section we consider left-cancellative semigroups with identity that
satisfy condition (C1). By following an idea from \cite{Mi} we show that it is
possible to construct a conditional expectation from $C^*(S)$ onto $\CC_I$
which may be used to reduce the question of injectivity of representations.

The proof of the next result is similar to \cite[Proposition 3.5]{CLSV}. For a discrete group $\Gamma$, we denote $i_\Gamma$ the canonical homomorphism sending $\gamma$ in $\Gamma$ to the generating unitary $i_\Gamma(\gamma)$ in $C^*(\Gamma)$, and we let $\delta_\Gamma$ be the homomorphism $C^*(\Gamma)\to C^*(\Gamma)\otimes C^*(\Gamma)$ induced by the map $\gamma\to i_\Gamma(\gamma)\otimes i_\Gamma(\gamma)$.

\begin{prop}\label{prop:coaction} Let $S$ be a right LCM semigroup with identity and assume
that there exists a homomorphism $\sigma:S\to T$  onto a
subsemigroup $T$ of a group $\Gamma$ such that $T$
generates $\Gamma$. Then there is a coaction $\delta:C^*(S)\to C^*(S)\otimes
C^*(\Gamma)$  such that
\[\delta(v_pv_q^*)=v_pv_q^*\otimes i_\Gamma(\sigma(p)\sigma(q)^{-1})\]
for all $p,q\in S$.
\end{prop}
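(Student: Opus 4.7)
The plan is to construct $\delta$ by invoking the universal property of $C^*(S)$. I would define, for each $p \in S$ and each $X \in \mathcal{J}(S)$, the elements
\[
V_p := v_p \otimes i_\Gamma(\sigma(p)) \in C^*(S) \otimes C^*(\Gamma), \qquad E_X := e_X \otimes 1.
\]
Since $\sigma(p) \in T \subset \Gamma$, the element $i_\Gamma(\sigma(p))$ is unitary in $C^*(\Gamma)$, so $V_p$ is an isometry (tensor product of an isometry with a unitary), and $E_X$ is a projection. The first step is then to check that the pair $(V_p, E_X)_{p \in S,\, X \in \mathcal{J}(S)}$ satisfies the defining relations (L1)--(L4) of Definition~\ref{def: Li's full algebra}. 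Relation (L1) is immediate from $\sigma$ being a homomorphism: $V_p V_q = v_{pq} \otimes i_\Gamma(\sigma(p)\sigma(q)) = v_{pq} \otimes i_\Gamma(\sigma(pq)) = V_{pq}$. Relations (L3) and (L4) are trivial because $E_X = e_X \otimes 1$ and $e_\emptyset = 0$, $e_S = 1$, $e_X e_Y = e_{X \cap Y}$ already hold in $C^*(S)$. For (L2), the unitary component cancels: $V_p E_X V_p^* = v_p e_X v_p^* \otimes 1 = e_{pX} \otimes 1 = E_{pX}$.

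By the universal property of $C^*(S)$, there is a unique $*$-homomorphism $\delta : C^*(S) \to C^*(S) \otimes C^*(\Gamma)$ with $\delta(v_p) = V_p$ and $\delta(e_X) = E_X$. A direct computation using $i_\Gamma(\sigma(q))^* = i_\Gamma(\sigma(q)^{-1})$ then gives
\[
\delta(v_p v_q^*) = v_p v_q^* \otimes i_\Gamma\bigl(\sigma(p)\sigma(q)^{-1}\bigr),
\]
which is the desired formula.

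Next I would verify that $\delta$ is a coaction. The coaction identity $(\delta \otimes \operatorname{id}) \circ \delta = (\operatorname{id} \otimes \delta_\Gamma) \circ \delta$ need only be checked on the generators $v_p$, and both sides send $v_p$ to $v_p \otimes i_\Gamma(\sigma(p)) \otimes i_\Gamma(\sigma(p))$, so the identity holds. For non-degeneracy, I would argue that
\[
\overline{\operatorname{span}}\{\delta(a)(1 \otimes b) : a \in C^*(S),\, b \in C^*(\Gamma)\} = C^*(S) \otimes C^*(\Gamma).
\]
Indeed, for any $p, q \in S$ and any $\gamma \in \Gamma$, setting $b = i_\Gamma(\sigma(q)\sigma(p)^{-1}\gamma)$ yields $\delta(v_pv_q^*)(1 \otimes b) = v_pv_q^* \otimes i_\Gamma(\gamma)$. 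Since $T$ generates $\Gamma$ the elements $i_\Gamma(\gamma)$ span a dense subspace of $C^*(\Gamma)$, and by Lemma~\ref{lem: spanning family for algebra} the products $v_p v_q^*$ span a dense subspace of $C^*(S)$, giving density.

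The only mildly delicate step is verifying (L2) on the whole family $\mathcal{J}(S)$ rather than just on principal right ideals, but since $S$ is right LCM we have $\mathcal{J}(S) = \{\emptyset, S\} \cup \{pS : p \in S\}$ by Lemma~\ref{lem: constr right ideals for right LCM sgp}, reducing the verification to the three cases $X = \emptyset$, $X = S$, and $X = qS$, all immediate. The whole construction is essentially the observation that $\sigma$ induces a $\Gamma$-grading on $C^*(S)$ that respects the LCM structure.
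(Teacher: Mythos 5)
Your proof is correct and follows essentially the same route as the paper: define $v_p \mapsto v_p \otimes i_\Gamma(\sigma(p))$, check the defining relations, invoke the universal property of $C^*(S)$, and verify the coaction identity and nondegeneracy on the spanning family. The only step the paper includes that you omit is the (one-line) verification that $\delta$ is injective, via $(\operatorname{id}_{C^*(S)}\otimes\varepsilon)\circ\delta = \operatorname{id}_{C^*(S)}$ where $\varepsilon:C^*(\Gamma)\to\C$ is integrated from $\gamma\mapsto 1$; since injectivity is part of the standard definition of a coaction, you should add this.
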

\begin{proof} Since $C^*(S)=\clsp\{v_pv_q^*:p,q\in S\}$, we define a map from
$S$ to $C^*(S)\otimes C^*(\Gamma)$ by $s\mapsto v_s\otimes
i_\Gamma(\sigma(s))$ for $s\in S$. A routine calculation shows that
$\{v_s\otimes i_\Gamma(\sigma(s))\}_{s\in S}$ and $\{v_pv_p^*\otimes
i_\Gamma(1_\Gamma)\}$ satisfy the relations that characterise $C^*(S)$, hence
by the universal property we obtain the required homomorphism $\delta$ such
that $\delta(v_s)=v_s\otimes i_\Gamma(\sigma(s))$ for all $s\in S$. Since
$C^*(S)$ is unital and $\delta(1)=1\otimes i_\Gamma(1_\Gamma)$, the map
$\delta$ is nondegenerate. If we let $\varepsilon:C^*(\Gamma)\to \C$ be the
homomorphism integrated from $\gamma\mapsto 1$, then the equality
$(\operatorname{id}_{C^*(S)}\otimes \varepsilon)\circ\delta=
\operatorname{id}_{C^*(S)}$ shows that $\delta$ is an injective map. The
coaction identity
\[
(\delta\otimes \operatorname{id}_{C^*(\Gamma)})\circ
\delta=(\operatorname{id}_{C^*(S)}\otimes \delta_\Gamma)\circ \delta
\]
is immediately checked on the generators $v_s$ of $C^*(S)$.
\end{proof}

\noindent By standard theory of coactions, see for example \cite{Qui}, the spectral
subspaces for $\delta$ are defined as $C^*(S)^\delta_{\sigma(s)}=\{a\in
C^*(S)\mid \delta(a)=a\otimes i_\Gamma (\sigma(s))\}$. The space
\[
C^*(S)^\delta=\{a\in C^*(S)\mid \delta(a)=a\otimes i_\Gamma (1_\Gamma)\}
\]
 is a $C^*$-subalgebra of $C^*(S)$, called the fixed-point algebra. There is
 always a conditional expectation $\Phi^\delta:C^*(S)\to C^*(S)^\delta$ such
 that $\Phi^\delta(a)=0$ if $a\in C^*(S)^\delta_{\sigma(s)}$ with
 $\sigma(s)\neq 1_\Gamma$. Moreover, it is known that $\Phi^\delta$ is
 faithful on positive elements precisely when the coaction $\delta$ is
 \emph{normal}: this is for instance the case when $\Gamma$ is amenable. Since
 $C^*(S)=\clsp\{v_pv_q^*:p,q\in S\}$, we have
 $C^*(S)^\delta=\clsp\{v_pv_q^*:p,q\in S, \sigma(p)=\sigma(q)\}$ and
 \begin{equation}\label{eq:cond-exp}
 \Phi^\delta(v_pv_q^*)=\begin{cases}v_pv_q^*,&\text{ if }\sigma(p)=\sigma(q)\\
 0,&\text{ otherwise}.\end{cases}
 \end{equation}
We will prove in Corollary~\ref{cor:phi-CI} that the above fixed-point algebra
$C^*(S)^\delta$ coincides with $\CC_I$ when the semigroup $S$ satisfies
condition (C1).

\begin{remark}\label{rmk:CO-CI} Recall that  $\CC_I$  was defined as $ C^*(\{v_x,e_{pS}~|~p \in
S,x \in S^*\})$, where we assume $S^*$ is non-trivial, and that  basic
properties of this subalgebra were established in Lemma~\ref{lem: properties
of the subalgebras}. We claim that if $S^*$ is non-trivial and $S$ satisfies
(C1), then $\CC_O=\CC_I$. To see this, note that Lemma~\ref{lem: properties of
the subalgebras} implies that it suffices to show $v_pv_xv_p^*\in\CC_I$ for
all $p \in S$ and $x \in S^*$. By (C1) there exists $y\in
S^*$ such that $px=yp$. Hence,
\[v_pv_xv_p^*=v_ye_{pS}=e_{ypS}v_y\in\CC_I.\]
\end{remark}

\noindent Given a right LCM semigroup $S$ with $S^*\neq \emptyset$ and satisfying (C1),
suppose that the monoid $\mathcal{S}$  constructed in
Proposition~\ref{prop:congruence} embeds into a group $\Gamma$ such that
$\mathcal{S}$ generates $\Gamma$. Proposition~\ref{prop:coaction} applied to
the canonical homomorphism $\sigma:S\to \mathcal{S}$, $\sigma(p)=[p]$ for
$p\in S$ gives a coaction of $\Gamma$ with associated conditional expectation
as described in \eqref{eq:cond-exp}. Note that, in this situation,
$\sigma(p)=\sigma(q)$ means precisely that $p=xq$ for some $x\in S^*$. Hence
$v_pv_q^*=v_{xq}v_q^*=v_{x}e_{qS}$, which is in $\CC_I$. Thus
$C^*(S)^\delta\subseteq \CC_I$, and since the reverse inclusion is immediate,
the two subalgebras of $C^*(S)$ are equal. In case  $S$ is not right
cancellative, it may happen that $p=xq=x'q$ for different $x,x'$ in
$S^*$. However,
$v_{x}e_{qS}=v_{xq}v_q^*=v_{x'q}v_q^*=v_{x'}e_{qS}$. We summarise these
considerations in the following result.

\begin{cor}\label{cor:phi-CI} Let $S$ be a right LCM semigroup such that
$S^*\neq \emptyset$ and (C1) holds. Assume that $\mathcal{S}$ embeds into a
group $\Gamma$ which is generated by the image of $\mathcal{S}$. Then there is
a well-defined conditional expectation $\Phi_{\CC_I}:C^*(S)\to \CC_I$ such that
\begin{equation}\label{def:cond-exp-CI}
\Phi_{\CC_I}(v_pv_q^*)=\begin{cases}v_xe_{qS},&\text{ if }p=xq\text{ for some
}x\in S^*\\
0,&\text{ if }p\not\sim q.\end{cases}
\end{equation}
If $\Gamma$ is amenable, then $\Phi_{\CC_I}$ is faithful on positive elements.
\end{cor}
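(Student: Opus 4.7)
The plan is to realise $\Phi_{\CC_I}$ as the conditional expectation associated to a suitable coaction of $\Gamma$ on $C^*(S)$. Because (C1) holds, Proposition~\ref{prop:congruence} makes $\mathcal{S}$ into a semigroup and the quotient map $\sigma \colon S \to \mathcal{S}$ is a semigroup homomorphism. Composing with the given embedding $\mathcal{S} \hookrightarrow \Gamma$ yields a homomorphism from $S$ onto a generating subsemigroup of $\Gamma$, so Proposition~\ref{prop:coaction} produces a coaction $\delta \colon C^*(S) \to C^*(S) \otimes C^*(\Gamma)$. Standard coaction theory then provides a conditional expectation $\Phi^\delta$ onto the fixed-point algebra $C^*(S)^\delta$ which satisfies \eqref{eq:cond-exp}, and if $\Gamma$ is amenable, the coaction is normal and $\Phi^\delta$ is faithful on positive elements.

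The central step is to identify $C^*(S)^\delta$ with $\CC_I$. For the inclusion $\CC_I \subseteq C^*(S)^\delta$, observe that the generators of $\CC_I$ are $\delta$-fixed: for $e_{pS} = v_p v_p^*$ we have $\sigma(p)\sigma(p)^{-1} = 1_\Gamma$, and for $v_x$ with $x \in S^*$ we have $\sigma(x) = [1_S] = 1_\Gamma$ by Proposition~\ref{prop:congruence}. Conversely, Lemma~\ref{lem: spanning family for algebra} combined with \eqref{eq:cond-exp} shows that $C^*(S)^\delta = \clsp\{v_p v_q^* \mid \sigma(p) = \sigma(q)\}$. The condition $\sigma(p) = \sigma(q)$ is exactly $p \sim q$, so there exists $x \in S^*$ with $p = xq$, and then
\[
v_p v_q^* = v_{xq} v_q^* = v_x v_q v_q^* = v_x e_{qS},
\]
which lies in $\CC_I$ by Lemma~\ref{lem: properties of the subalgebras}. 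This also rewrites \eqref{eq:cond-exp} in the form \eqref{def:cond-exp-CI}.

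Finally, we must check that $v_x e_{qS}$ depends only on $(p,q)$ and not on the choice of $x \in S^*$ with $p = xq$, which matters when $S$ is not right cancellative; but this is immediate from the identity $v_x e_{qS} = v_{xq} v_q^*$. Setting $\Phi_{\CC_I} := \Phi^\delta$ then yields the desired expectation, with the faithfulness assertion in the amenable case inherited from $\Phi^\delta$. The only subtle point is this well-definedness check; otherwise the argument is a direct assembly of the coaction machinery developed above with the inner-core structure from Section~\ref{sec: right LCM algebras}.
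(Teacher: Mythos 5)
Your argument is correct and follows essentially the same route as the paper: apply Proposition~\ref{prop:coaction} to the composite $S \to \mathcal{S} \hookrightarrow \Gamma$, identify the fixed-point algebra of the resulting coaction with $\CC_I$ via $v_pv_q^* = v_xe_{qS}$ when $p = xq$, and note that $v_xe_{qS} = v_{xq}v_q^*$ is independent of the choice of $x$ in the absence of right cancellation. The faithfulness claim in the amenable case is obtained exactly as in the paper, from normality of the coaction.
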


\noindent Our main result about injectivity of representations in terms of their restriction to $\CC_I$ is the following theorem.

\begin{thm}\label{thm:uniqueness theorem based on C_I}
Let $S$ be a  cancellative right LCM semigroup with identity $1_S$ such that
$S$ satisfies (C1) and the semigroup $\mathcal{S}$ constructed in Proposition~\ref{prop:congruence} embeds into a group
$\Gamma$ in such a way that $\Gamma$ is generated by $\mathcal{S}$. Assume that the conditional expectation $\Phi_{\CC_I}:C^*(S)
\longrightarrow \CC_I$ constructed in Corollary~\ref{cor:phi-CI} is faithful, and that  there is a  faithful conditional expectation $\Phi_0$ from $\CC_I$
onto $\DD$ such that $\Phi_0(e_{qS}v_x)=\delta_{x, 1_S}e_{qS}$ for all $q\in
S$ and $x\in S^*$. Then a *-homomorphism $\pi:C^*(S) \longrightarrow B$ is
injective if and only if $\pi|_{\CC_I}$ is injective.
\end{thm}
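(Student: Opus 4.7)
The forward direction is immediate, so I focus on the converse. My plan is to adopt the Cuntz/Laca--Raeburn strategy: assuming $\pi|_{\CC_I}$ is injective, I will construct a contractive linear map $\Phi: \pi(C^*(S)) \to \pi(\CC_I)$ satisfying $\Phi \circ \pi = \pi|_{\CC_I} \circ \Phi_{\CC_I}$. Once such a $\Phi$ is in hand, injectivity of $\pi$ follows by the standard faithfulness argument: if $a \in C^*(S)_+ \setminus \{0\}$, then $\Phi_{\CC_I}(a)\neq 0$ by faithfulness of $\Phi_{\CC_I}$, and injectivity of $\pi|_{\CC_I}$ forces $\Phi(\pi(a)) = \pi(\Phi_{\CC_I}(a))\neq 0$, whence $\pi(a)\neq 0$.

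A pivotal structural observation, which I will use repeatedly, is that (C1) forces $\DD$ to lie in the center of $\CC_I$: given $x \in S^*$ and $p \in S$, (C1) yields $x' \in S^*$ with $xp = px'$, so that $v_x e_{pS} v_x^* = e_{xpS} = e_{px'S} = e_{pS}$, i.e., $v_x$ and $e_{pS}$ commute. Consequently every projection $Q \in \DD$ commutes with $\Phi_{\CC_I}(t_F)$ for any finite sum $t_F = \sum_{p,q \in F} \lambda_{p,q} v_p v_q^*$; in particular $Q \Phi_{\CC_I}(t_F) Q = \Phi_{\CC_I}(t_F) Q$.

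Contractivity of $\Phi$ on finite sums reduces, via injectivity of $\pi|_{\CC_I}$ (which is isometric on $\CC_I$), to the estimate $\|\Phi_{\CC_I}(t_F)\| \leq \|\pi(t_F)\|$. For each $\varepsilon > 0$, I aim to produce a non-zero projection $Q \in \DD$ satisfying (a) $Q(t_F - \Phi_{\CC_I}(t_F))Q = 0$ and (b) $\|Q \Phi_{\CC_I}(t_F) Q\| \geq \|\Phi_{\CC_I}(t_F)\| - \varepsilon$. The chain
\begin{equation*}
\|\Phi_{\CC_I}(t_F)\| - \varepsilon \leq \|Q \Phi_{\CC_I}(t_F) Q\| = \|Q t_F Q\| = \|\pi(Q t_F Q)\| \leq \|\pi(t_F)\|,
\end{equation*}
where the second equality again uses injectivity of $\pi|_{\CC_I}$ on the $\CC_I$-element $Q t_F Q$, then yields the bound upon letting $\varepsilon \to 0$.

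For (a) I adapt the compression machinery of Section~\ref{subsection-LR}: grouping $F$ into its $\sim$-equivalence classes with representatives $\{p_1,\dots,p_k\}$ and following Lemmas~\ref{lem: basic projection--D} and~\ref{lem: second projection--D}, I find a subset $A \subseteq F$ (necessarily a union of whole equivalence classes, since $p \sim q$ forces $e_{pS} = e_{qS}$) and an enlargement $F_1 \supseteq F$ so that $Q := Q_{F_1,A}^e$ annihilates every off-core term $v_p v_q^*$ with $p \not\sim q$. For (b) I apply Lemma~\ref{lem: first Qs--D} to the commuting projections $\{e_{p_i S}\}$: this yields an orthogonal decomposition $1 = \sum_B Q_B$, and a subset $B^*$ achieving $\|\Phi_{\CC_I}(t_F) Q_{B^*}\| = \|\Phi_{\CC_I}(t_F)\|$, so I choose $A$ such that $Q_{F,A}^e = Q_{B^*}$. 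The hard part, which I expect to be the main obstacle, is reconciling these two choices: the enlargement from $F$ to $F_1$ required to kill the $t_{F,2}$ terms strictly refines $Q_{B^*}$ to $Q_{F_1,A}^e$ and a priori could shrink $\|\Phi_{\CC_I}(t_F) Q_{F_1,A}^e\|$ below $\|\Phi_{\CC_I}(t_F)\|$. To control this loss I would invoke the faithfulness of $\Phi_0 \colon \CC_I \to \DD$, which ties the $\CC_I$-norm to its diagonal image, so that the enlarging elements can be chosen (analogously to the $\alpha_{p,q}$ in Lemma~\ref{lem: second projection--D}) to preserve the dominant block's contribution up to $\varepsilon$.
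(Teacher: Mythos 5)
Your overall architecture (compress by a diagonal projection, transfer norms through the injective restriction of $\pi$ to the core) is in the right spirit, but the proposal rests on a structural claim that is false, and the fallback you offer for the genuinely hard step is not an argument. The ``pivotal observation'' that (C1) makes $\DD$ central in $\CC_I$ confuses (C1) with (C2): condition (C1) says $aS^*\subseteq S^*a$, so for $p\in S$ and $x\in S^*$ it produces $x'\in S^*$ with $px=x'p$ (whence $v_pv_xv_p^*=v_{x'}e_{pS}$, which is exactly what Remark~\ref{rmk:CO-CI} uses to get $\CC_O=\CC_I$), \emph{not} $xp=px'$. Consequently $v_xe_{pS}v_x^*=e_{xpS}$ need not equal $e_{pS}$. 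A concrete counterexample is $S=G\rtimes_\theta P$ with $P^*=\{1_P\}$, which satisfies (C1) by Proposition~\ref{prop:calS-for-GP}: here $(h,1_P)(g,q)S=(hg,q)S$, which by Lemma~\ref{lem: GxP disjoint ideals} is disjoint from $(g,q)S$ whenever $g^{-1}h^{-1}g\notin\theta_q(G)$. The same left/right mix-up infects your parenthetical that $p\sim q$ forces $e_{pS}=e_{qS}$: $p\sim q$ means $p=xq$, which gives $Sp=Sq$; it is the relation $p=qy$ ($p\,\mathcal{R}\,q$) that gives $pS=qS$. Since your steps (a), (b) and the central equality $\|Q\Phi_{\CC_I}(t_F)Q\|=\|Qt_FQ\|$ all lean on this commutation, the proof does not go through as written. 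Moreover, your final device for controlling the loss of norm under the enlargement $F\rightsquigarrow F_1$ --- invoking faithfulness of $\Phi_0$ to ``tie the $\CC_I$-norm to its diagonal image'' --- cannot work: faithfulness is a purely qualitative property and yields no quantitative norm comparison between $\Phi_{\CC_I}(t_F)$ and $\Phi_0(\Phi_{\CC_I}(t_F))$.

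The paper's proof avoids both problems by never attempting to capture $\|\Phi_{\CC_I}(t_F)\|$. It composes the two given expectations into $\Phi=\Phi_0\circ\Phi_{\CC_I}:C^*(S)\to\DD$, which is faithful and sends $t_F$ to the purely diagonal element $t_{F,\DD}=\sum_{p}\lambda_{p,p}e_{pS}$ (only $p=q$ survives, because $\Phi_0(v_xe_{qS})=\delta_{x,1_S}e_{qS}$). For positive $t_F$, Lemma~\ref{lem: second projection--D} supplies a projection $Q=Q^e_{F_1,A}$ with $Qt_FQ=Q(t_{F,\DD}+t_{F,3})Q\in\CC_O=\CC_I$ and $Qt_{F,\DD}Q=\|t_{F,\DD}\|Q\neq 0$; then $\|\pi(t_F)\|\geq\|\pi(Qt_FQ)\|=\|Qt_FQ\|\geq\|\Phi(Qt_FQ)\|=\|t_{F,\DD}\|$, where the middle equality uses that $\pi$ is isometric on $\CC_I$ and the last inequality only uses that $\Phi$ is a contraction. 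This yields a contraction $\Phi^\pi:\pi(C^*(S))\to\DD$ with $\Phi^\pi\circ\pi=\Phi$, which suffices for the standard faithfulness argument. If you want to salvage your version, the correct move is to lower your target from $\pi(\CC_I)$ to $\DD$ exactly as above; the inequality $\|\Phi_{\CC_I}(t_F)\|\leq\|\pi(t_F)\|$ you aim for is strictly stronger than what is needed and is not established by the techniques available here.
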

\begin{proof} One direction of the theorem is clear, so assume that $\pi\vert_{\CC_I}$ is injective. We must prove that $\pi$ is injective.

 Let  $\Phi := \Phi_0 \circ \Phi_{\CC_I}$ be the faithful conditional
 expectation from $C^*(S)$ to $\DD$ obtained by composing the two given
 expectations. The idea of the proof is to construct a contraction
 $\Phi^\pi:\pi(C^*(S))\to \DD$ such that $\Phi^\pi\circ \pi=\Phi$. Then the
 injectivity of $\pi$ will follow from a standard argument: let $a\in
 C^*(S)_+$ with $a\neq 0$. From $\Phi^{\pi}(\pi(a))=\Phi({a})$ and
 the fact that $\Phi$ is faithful on positive elements it follows that $\pi(a)\neq 0$.

  Let $F \subset S$ be finite and $t_F \in C^*(S)$  a linear combination of
  $v_pv_q^*, p,q \in F$ with scalars $\lambda_{p,q}$ in $\mathbb{C}$ such that
  $t_F$ is positive and non-zero.  Then $\Phi(t_F)\neq 0$. We have
  \begin{align*}
 \Phi(t_F)
 &=\sum_{p\sim q}\lambda_{p,q} (\Phi_0\circ \Phi_{\CC_I})(v_pv_q^*)\\
 &= \sum_{p\sim q, p=xq}\lambda_{p,q}\Phi_0(v_xe_{qS})\\
 &=\sum_{\{p,q\in F\mid p=q\}} \lambda_{p,q}e_{qS},
  \end{align*}
  which is $t_{F,\DD}$, and is non-zero.

  By Lemma~\ref{lem: second projection--D}, there are finite subsets $A
  \subset F \subset F_1 \subset S$ such that $Q_{F_1,A}^et_FQ_{F_1,A}^e \in
  \CC_O$ and $Q_{F_1,A}^et_{F,\DD} = \|t_{F,\DD}\| Q_{F_1,A}^e \neq 0$. Remark~\ref{rmk:CO-CI}
  shows that $\CC_O = \CC_I$, so  $Q_{F_1,A}^et_FQ_{F_1,A}^e \in \CC_I$.
  Therefore, we get
\[\begin{array}{lcl}
\Phi(Q_{F_1,A}^et_FQ_{F_1,A}^e) &=& Q_{F_1,A}^e \Phi(t_F)Q_{F_1,A}^e\\
&=& Q_{F_1,A}^et_{F,\DD}Q_{F_1,A}^e \\
&=& \|t_{F,\DD}\| Q_{F_1,A}^e \neq 0.
\end{array}\]
Since $\pi|_{C_I}$ is injective, it follows that
\[\begin{array}{lcl}
\|\pi(t_F)\| &\geq& \|\pi(Q_{F_1,A}^e)\pi(t_F)\pi(Q_{F_1,A}^e)\| \\
&=& \|\pi(Q_{F_1,A}^et_FQ_{F_1,A}^e)\| \\
&=& \|Q_{F_1,A}^et_FQ_{F_1,A}^e\| \\
&\geq& \| \Phi(Q_{F_1,A}^et_FQ_{F_1,A}^e)\| \\
&=& \|t_{F,\DD}\|.
\end{array}\]
Thus $\Phi^\pi:\pi(C^*(S))\to \DD$ given by $\Phi^\pi(\pi(t_F)):=t_{F, \DD}$
is a well-defined contraction with the desired properties.
\end{proof}

\noindent For the moment the only examples of semigroups $S$ we know for which the hypotheses of Theorem~\ref{thm:uniqueness theorem based on C_I} are satisfied are semidirect products $G\rtimes_\theta P$ covered by Proposition~\ref{prop:left-inverse}. However, we expect that the theorem will apply in situations where $G$ or the enveloping group of $P$ are non-amenable, for example when they are free groups. The challenge is to generalise the arguments of \cite{LaRa} that show existence of a faithful conditional expectation onto $\DD$ from the case $S^*=\{1_S\}$ to the case that there are non-trivial units.

\begin{cor}\label{cor: isom full and reduced algebras - easy argument}
Assume the notation and hypotheses of Theorem~\ref{thm:uniqueness theorem
based on C_I}.
Then the left regular representation $\lambda: C^*(S) \longrightarrow
C^*_r(S)$ is an isomorphism.
\end{cor}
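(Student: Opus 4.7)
The plan is to reduce the statement to Theorem~\ref{cor:easy-amenability}, which already provides the isomorphism $\lambda:C^*(S)\to C_r^*(S)$ as soon as $C^*(S)$ admits a faithful conditional expectation onto $\DD$ of the form described in \eqref{def:Phi-D}. So the task becomes: construct such a $\Phi_\DD$ from the data supplied by the hypotheses of Theorem~\ref{thm:uniqueness theorem based on C_I}.

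The first step is to form the composition $\Phi := \Phi_0 \circ \Phi_{\CC_I} : C^*(S) \to \DD$. Since $\Phi_{\CC_I}(C^*(S)) \subseteq \CC_I$, this is well-defined, and being the composition of two conditional expectations it is itself a conditional expectation. Moreover, the composition of two faithful conditional expectations is faithful on positive elements: if $a \ge 0$ satisfies $\Phi(a)=0$, then $\Phi_{\CC_I}(a) \ge 0$ with $\Phi_0(\Phi_{\CC_I}(a))=0$, so $\Phi_{\CC_I}(a)=0$ by faithfulness of $\Phi_0$, and then $a=0$ by faithfulness of $\Phi_{\CC_I}$.

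The second step is to verify that $\Phi$ coincides with the map of \eqref{def:Phi-D} on the spanning monomials $v_pv_q^*$ provided by Lemma~\ref{lem: spanning family for algebra}. Using the explicit formula \eqref{def:cond-exp-CI} for $\Phi_{\CC_I}$ from Corollary~\ref{cor:phi-CI}, together with relation (L2) in the form $v_xe_{qS}=e_{xqS}v_x$, and the hypothesis $\Phi_0(e_{qS}v_x)=\delta_{x,1_S}e_{qS}$, one checks three mutually exclusive cases: if $p=q$, then $\Phi(v_pv_p^*)=\Phi_0(e_{pS})=e_{pS}=v_pv_p^*$; if $p\sim q$ with $p=xq$ for some $x\in S^*\setminus\{1_S\}$, then $\Phi(v_pv_q^*)=\Phi_0(e_{xqS}v_x)=0$; and if $p\not\sim q$, then $\Phi_{\CC_I}(v_pv_q^*)=0$ already. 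Hence $\Phi$ agrees with $\Phi_\DD$ on spanning elements, and is a faithful conditional expectation onto $\DD$.

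Having established existence of the faithful conditional expectation $\Phi_\DD$, the hypotheses of Theorem~\ref{cor:easy-amenability} are in place, and the conclusion that $\lambda$ is an isomorphism follows immediately. There is no genuine obstacle in this argument; it is essentially a bookkeeping exercise assembling the hypotheses of Theorem~\ref{thm:uniqueness theorem based on C_I} into the conclusion of Theorem~\ref{cor:easy-amenability}, with the only small verification being the case analysis showing $\Phi_0\circ\Phi_{\CC_I}=\Phi_\DD$ on monomials.
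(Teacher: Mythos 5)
Your proof is correct and follows essentially the same route as the paper: both take the faithful conditional expectation $\Phi_0\circ\Phi_{\CC_I}$, identify it with $\Phi_\DD$ on the monomials $v_pv_q^*$, and then run the argument of Theorem~\ref{cor:easy-amenability} using the diagonal isomorphism $\DD\cong\DD_r$. The only difference is that you spell out the case analysis on monomials explicitly, whereas the paper relies on the computation already carried out in the proof of Theorem~\ref{thm:uniqueness theorem based on C_I}.
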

\begin{proof} Let $\Phi=\Phi_0\circ \Phi_{\CC_I}$ be the faithful conditional
expectation from Theorem~\ref{thm:uniqueness theorem based on C_I}. Since $S$
has an identity, $\JJ(S)$ is independent, and  therefore $\lambda$ restricts
to an isomorphism $\DD \cong \DD_r$, \cite{Li1}. Thus the argument of
Theorem~\ref{cor:easy-amenability} can be used.
\end{proof}

\noindent Results with the flavour of a gauge-invariant uniqueness theorem have been proved for many classes of $C^*$-algebras, see \cite{CLSV} and the references therein. In our context, a straightforward version is as presented in the next proposition.

\begin{prop}\label{thm:giut}
Let $S$ be a cancellative right LCM semigroup with identity such that $S$
satisfies (C1) and the conditional expectation
$\Phi_{\CC_I}:C^*(S) \longrightarrow \CC_I$ constructed in
Corollary~\ref{cor:phi-CI} is faithful. Then a $*$-homomorphism $\pi:C^*(S)\to
B$ is injective if and only if $\pi\vert_{\CC_I}$ is injective and $B$ admits
a coaction $\epsilon$ of the enveloping group $\Gamma$ of $\mathcal{S}$
such that $\pi$ is $\delta-\epsilon$-equivariant, i.e. $(\pi\otimes\operatorname{id}_{C^*(\Gamma)})\circ \delta=\epsilon\circ \pi$.
\end{prop}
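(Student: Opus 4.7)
Plan for the proof:

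The forward implication is essentially formal. Injectivity of $\pi$ trivially restricts to injectivity on $\CC_I$, and the pushforward $\epsilon(\pi(a)) := (\pi \otimes \id)(\delta(a))$ defines a gauge coaction on $\pi(C^*(S)) \subseteq B$ which is tautologically equivariant.

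For the converse I would adapt the conditional-expectation argument from the proof of Theorem~\ref{thm:uniqueness theorem based on C_I}. First, invoke the standard machinery of discrete-group coactions to obtain the canonical conditional expectation $\Phi^\epsilon: B \to B^\epsilon$ onto the fixed-point algebra of $\epsilon$, which picks out the $1_\Gamma$-spectral component. Since the analogous expectation for $\delta$ on $C^*(S)$ is precisely $\Phi_{\CC_I}$ by Corollary~\ref{cor:phi-CI}, equivariance $\epsilon \circ \pi = (\pi \otimes \id) \circ \delta$ yields the key intertwining relation
\[
\Phi^\epsilon \circ \pi = \pi \circ \Phi_{\CC_I},
\]
which one verifies on the spanning monomials $v_p v_q^*$ (each of which is $\Gamma$-homogeneous under $\delta$, being sent to $v_p v_q^* \otimes i_\Gamma(\sigma(p)\sigma(q)^{-1})$ according to Proposition~\ref{prop:coaction}).

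With this intertwining in hand, the injectivity of $\pi$ is a short consequence of faithfulness. Given $a \in C^*(S)_+$ with $\pi(a) = 0$, one has $0 = \Phi^\epsilon(\pi(a)) = \pi(\Phi_{\CC_I}(a))$; injectivity of $\pi|_{\CC_I}$ then forces $\Phi_{\CC_I}(a) = 0$, whence $a = 0$ by faithfulness of $\Phi_{\CC_I}$. Since this detects $\pi(a) = 0$ on positive elements, it upgrades to full injectivity of $\pi$, completing the argument and yielding the same diagrammatic shape as \eqref{com:diagram DD}.

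The delicate point is the existence of $\Phi^\epsilon$ as a bounded map when $\Gamma$ is non-amenable, since the naive ``identity-Fourier-coefficient'' recipe $(\id \otimes \tau) \circ \epsilon$ need not extend boundedly to all of $B$. What saves us is that faithfulness of $\Phi^\epsilon$ is never invoked, only its existence together with the intertwining identity, and the former is supplied by the general spectral subspace decomposition of discrete coactions. If one wishes to avoid this subtlety entirely, the whole argument can be localised to $\pi(C^*(S)) \subseteq B$, where $\epsilon$ is a direct pushforward of $\delta$ and the required expectation is simply the pushforward of $\Phi_{\CC_I}$ via $\pi$.
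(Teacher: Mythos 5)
Your proposal is correct and follows essentially the same route as the paper: the paper's (very terse) proof likewise produces the conditional expectation $\Phi^\epsilon$ from the compatible gauge coaction, records the intertwining identity $\Phi^\epsilon\circ\pi=(\pi\vert_{\CC_I})\circ\Phi_{\CC_I}$, and then runs the standard faithfulness argument to lift injectivity from $\CC_I$ to $C^*(S)$. Your additional remarks --- checking the intertwining on the $\Gamma$-homogeneous monomials $v_pv_q^*$ and noting that only existence, not faithfulness, of $\Phi^\epsilon$ is needed (so amenability of $\Gamma$ plays no role here) --- are exactly the details the paper leaves implicit.
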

\begin{proof} If $B$ admits a coaction $\epsilon$ as in the hypothesis, then there is a
conditional expectation $\Phi^\epsilon$ from $B$ onto $\pi(\CC_I)$ such that
$\Phi^\epsilon\circ \pi=(\pi\vert_{\CC_I})\circ \Phi_{\CC_I}$. Now the
standard argument shows that injectivity of $\pi$ on $\CC_I$ can be lifted to
$C^*(S)$.
\end{proof}

\section{Applications}\label{sec: examples}
\subsection{Semidirect products of groups by semigroups}~\\
The new class of right LCM semigroups covered in this work is that of
semidirect products of a group by the action of a semigroup. Throughout this
subsection, let $G$ be a group, $P$ a cancellative right LCM semigroup with identity $1_P$,
and $P\stackrel{\theta}{\curvearrowright}G$ an action by injective group
endomorphisms of $G$. The semidirect product $\gxp$ is denoted by $S$ for
convenience of notation.

\begin{definition}
The action $\theta$ is said to \emph{respect the order} on $P$ if for all $p,q \in
P$ with $pP \cap qP \neq \emptyset$, we have $\theta_p(G) \cap
\theta_q(G) = \theta_r(G)$, where $r\in P$ is any element such that $pP \cap
qP = rP$. This is well-defined since $r_1P= pP
\cap qP=r_2P$ implies that $r_1 = r_2x$ for some $x \in
P^*$, which means $\theta_{r_1}(G)=\theta_{r_2x}(G)=\theta_{r_2}(G)$ because
$\theta_x$ is an automorphism of $G$.
\end{definition}

\begin{prop}\label{prop: GxP right LCM}
If $\theta$ respects the order, then $S$ is a right LCM semigroup.
\end{prop}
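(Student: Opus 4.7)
The plan is to verify the two defining properties of a right LCM semigroup directly: left cancellativity of $S$ and the property that $(g,p)S \cap (h,q)S$ is either empty or a principal right ideal.

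Left cancellativity of $S = G \rtimes_\theta P$ has already been noted in the discussion following Definition~\ref{def:sdps}, since $G$ is a group (hence left cancellative), $P$ is left cancellative, and $\theta$ is by injective endomorphisms. So I would state this and move on.

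The main step is a clean description of principal right ideals. I would first observe that for any $(g,p) \in S$,
\[
(g,p)S = g\theta_p(G) \times pP,
\]
because $(g,p)(a,s) = (g\theta_p(a), ps)$ and as $(a,s)$ ranges over $G \times P$, the first coordinate ranges over $g\theta_p(G)$ (using surjectivity of $a \mapsto g\theta_p(a)$ onto $g\theta_p(G)$) and the second ranges over $pP$, independently. From this, for any $(g,p), (h,q) \in S$,
\[
(g,p)S \cap (h,q)S = \bigl(g\theta_p(G) \cap h\theta_q(G)\bigr) \times (pP \cap qP).
\]

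Now I would split into cases. If $pP \cap qP = \emptyset$, the intersection is empty and we are done. Otherwise, since $P$ is right LCM, there is $r \in P$ with $pP \cap qP = rP$; by the hypothesis that $\theta$ respects the order, $\theta_p(G) \cap \theta_q(G) = \theta_r(G)$. The intersection $g\theta_p(G) \cap h\theta_q(G)$ is a left-coset intersection in $G$, and a standard group-theoretic argument shows that such an intersection is either empty or a coset $k(\theta_p(G) \cap \theta_q(G)) = k\theta_r(G)$ for any element $k$ of the intersection. In the empty case, $(g,p)S \cap (h,q)S = \emptyset$. In the nonempty case,
\[
(g,p)S \cap (h,q)S = k\theta_r(G) \times rP = (k,r)S,
\]
using the factorisation formula above. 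Hence the intersection is a principal right ideal, as required.

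The only step that requires real input is the identity $\theta_p(G) \cap \theta_q(G) = \theta_r(G)$, which is precisely the definition of $\theta$ respecting the order; the rest is the coset lemma and bookkeeping with the decomposition $(g,p)S = g\theta_p(G) \times pP$. I do not expect any serious obstacle — the main care is simply to verify that the $k \in G$ obtained from the coset argument genuinely lies in the intersection, so that $(k,r)$ is a valid representative of the resulting principal right ideal.
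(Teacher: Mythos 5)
Your proof is correct, and it reaches the same least common multiple $(k,r)$ as the paper does, but it gets there by a cleaner, more structural route. The paper's proof assumes $(g_1,p_1)S\cap(g_2,p_2)S\neq\emptyset$, extracts witnesses $h_1,h_2$ with $g_1\theta_{p_1}(h_1)=g_2\theta_{p_2}(h_2)$, and then verifies the two containments by direct element-chasing, invoking ``respects the order'' in the middle of the reverse containment to rewrite $\theta_{p_1}(h_1^{-1}g)$ as $\theta_q(k)$. You instead isolate the identity $(g,p)S=g\theta_p(G)\times pP$ (valid because the two coordinates of $(g\theta_p(a),ps)$ vary independently), so that the intersection of two principal right ideals factors as a coset intersection in $G$ times an ideal intersection in $P$; each factor is then handled by a self-contained standard fact (the coset lemma $gH\cap hK=k(H\cap K)$ for $k$ in the intersection, and the right LCM property of $P$), with ``respects the order'' entering exactly once to identify $\theta_p(G)\cap\theta_q(G)$ with $\theta_r(G)$. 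What your approach buys is transparency and reusability: the product decomposition immediately yields the paper's Lemma~\ref{lem: GxP disjoint ideals} as a special case, and it makes clear that the only genuine input is the coset identity. What the paper's approach buys is that it never needs to name the full ideal as a set, which keeps it closer in style to the semigroup computations used elsewhere in that section. Your closing caution about checking that $k$ actually lies in both cosets is exactly the right point to be careful about, and your argument handles it correctly.
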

\begin{proof} Since both $G$ and $P$ are left cancellative and $\theta$ acts
by injective maps, $S$ is left cancellative.
Suppose $g_1,g_2 \in G$ and $p_1,p_2 \in P$ such that $(g_1,p_1)S \cap
(g_2,p_2)S \neq \emptyset$. Then $p_1P \cap p_2P \neq \emptyset$,
and since $P$ is right LCM, there is $q \in P$ satisfying $p_1P \cap p_2P =
qP$. Denote by $p_1',q_1'\in P$ the elements satisfying $q=p_1p_1'=p_2p_2'$.
We must also have $h_1,h_2\in G$ such that
$g_1\theta_{p_1}(h_1)=g_2\theta_{p_2}(h_2)$. We claim that
\[(g_1,p_1)S \cap (g_2,p_2)S = (g_1\theta_{p_1}(h_1),q)S.\]
Since $(g_1\theta_{p_1}(h_1),q)=(g_1,p_1)(h_1,p_1')=(g_2,p_2)(h_2,p_2')$, the right ideal $(g_1\theta_{p_1}(h_1),q)S$ is contained in $(g_1,p_1)S \cap (g_2,p_2)S$.

For the reverse containment, suppose that $(g,s),(h,t)\in S$ with
$(g_1,p_1)(g,s)=(g_2,p_2)(h,t)$. Then $g_1\theta_{p_1}(g)=g_2\theta_{p_2}(h)$
and $p_1s=p_2t$. We now immediately have $p_1s=p_2t=qq'$ for some $q'\in P$.
The identities $g_1\theta_{p_1}(h_1)=g_2\theta_{p_2}(h_2)$ and
$g_1\theta_{p_1}(g)=g_2\theta_{p_2}(h)$ yield
$\theta_{p_1}(h_1^{-1}g)=\theta_{p_2}(h_2^{-1}h)$. Since $\theta$ respects the
order on $P$, we have $\theta_{p_1}(G)\cap\theta_{p_2}(G)=\theta_q(G)$, and
hence $\theta_{p_1}(h_1^{-1}g)=\theta_q(k)$ for some $k\in G$. Then
\begin{align*}
(g_1,p_1)(g,s)=(g_1\theta_{p_1}(g),p_1s)
=(g_1\theta_{p_1}(h_1)\theta_{p_1}(h_1^{-1}g),p_1s)
&=(g_1\theta_{p_1}(h_1),q)(k,q')\\
&\in (g_1\theta_{p_1}(h_1),q)S.
\end{align*}
So the reverse containment holds, and hence $S$ is right LCM.
\end{proof}

\noindent Since the focus of this paper is on right LCM semigroups we shall assume from
now on that $\theta$ respects the order. The structure of $\JJ(S)$ is determined by the semigroup $P$ and the collection of cosets $\{G/\theta_p(G)\}_{p \in P}$.

\begin{lemma}\label{lem: GxP disjoint ideals}
For any $g,h \in G$ and $p \in P$ we have
\[(g,p)S \cap (h,p)S\vspace*{2mm} = \begin{cases} (g,p)S, &\text{if } g^{-1}h
\in \theta_p(G),\\ \emptyset, &\text{ otherwise.} \end{cases}\]
\end{lemma}
\begin{proof}
If the intersection is non-empty, we have $g\theta_p(g_1) = h\theta_p(h_1)$
for some $g_1,h_1 \in G$. Then $g^{-1}h =\theta_p(g_1h_1^{-1}) \in
\theta_p(G)$, as needed.
\end{proof}

\begin{cor}\label{cor:useful-equivalence}
Let $P$ satisfy (C2). For any $(g,p)\in S$ and $(h,x)\in S^*$, the following are equivalent:
\begin{enumerate}
\item[(i)] $(h,x)(g,p)S \neq (g,p)S$;
\item[(ii)]  $(h\theta_x(g),p)S \cap (g,p)S =\emptyset$;
\item[(iii)] $g^{-1}h\theta_x(g) \notin \theta_p(G)$.
\end{enumerate}
In particular, $S$ satisfies condition (D1).
\end{cor}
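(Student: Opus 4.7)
The plan is to reduce the three conditions to a statement about principal right ideals whose second coordinates agree, and then apply Lemma~\ref{lem: GxP disjoint ideals} directly. The key preliminary observation is that condition (C2) on $P$ lets us replace $(h,x)(g,p)S$ by a principal right ideal whose second coordinate is $p$, putting us in the setting of Lemma~\ref{lem: GxP disjoint ideals}.

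First I would compute, using the semidirect product multiplication and Lemma~\ref{lem: units in GxP}, that $(h,x)(g,p) = (h\theta_x(g), xp)$ with $x \in P^*$. Applying (C2) to $P$, there exists $y \in P^*$ with $xp = py$. Hence $(h,x)(g,p) = (h\theta_x(g), py) = (h\theta_x(g), p)(1_G, y)$, and since $(1_G, y) \in S^*$ generates the full right ideal $S$, I obtain
\[
(h,x)(g,p)S \;=\; (h\theta_x(g), p)S.
\]
This is the step that eats up the non-trivial $P^*$-part and brings the second coordinates into alignment.

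Next I would invoke Lemma~\ref{lem: GxP disjoint ideals}: when two principal right ideals of $S$ have the same $P$-coordinate, they are either disjoint or equal, and the dichotomy is controlled by membership in $\theta_p(G)$. Concretely, $(h\theta_x(g), p)S$ and $(g,p)S$ are disjoint precisely when $g^{-1}h\theta_x(g) \notin \theta_p(G)$, and otherwise they coincide. Combining this with the identification above, the three conditions (i), (ii), (iii) all translate into $g^{-1}h\theta_x(g) \notin \theta_p(G)$, which proves the equivalence. The main obstacle is simply keeping the bookkeeping straight between the two different representations of $(h,x)(g,p)S$; there are no real analytic difficulties.

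For the final assertion that $S$ satisfies (D1), take an arbitrary $X \in \JJ(S)$. By Lemma~\ref{lem: constr right ideals for right LCM sgp} we may assume $X = (g,p)S$ (the cases $X = \emptyset$ and $X = S$ being trivial), and any element of $S^*$ has the form $(h,x)$ with $x \in P^*$. The equivalence (i)~$\Leftrightarrow$~(ii), together with the identity $(h,x)(g,p)S = (h\theta_x(g), p)S$ established above, says exactly that $(h,x)X \neq X$ if and only if $(h,x)X \cap X = \emptyset$. Taking contrapositives gives (D1).
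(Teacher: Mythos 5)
Your proposal is correct and follows essentially the same route as the paper: use (C2) to produce $y\in P^*$ with $xp=py$ so that $(h,x)(g,p)S=(h\theta_x(g),p)S$, then apply Lemma~\ref{lem: GxP disjoint ideals} to get the disjoint-or-equal dichotomy governed by $g^{-1}h\theta_x(g)\in\theta_p(G)$, from which (D1) follows immediately. The only cosmetic difference is that you state the identity of ideals $(h,x)(g,p)S=(h\theta_x(g),p)S$ explicitly, whereas the paper records it only at the level of the intersections with $(g,p)S$.
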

\begin{proof}
Take $(g,p) \in S$ and $(h,x) \in S^*$. By Lemma \ref{lem: units in GxP}, we
 have that $x \in P^*$. Condition (C2) gives $y\in P^*$ with $xp = py$. Therefore,
 \[ (h,x)(g,p)S\cap (g,p)S=(h\theta_x(g),p)S\cap (g,p)S.\]
By Lemma~\ref{lem: GxP disjoint ideals}, these intersections are non-empty if and only if
$g^{-1}h\theta_x(g)\in \theta_p(G)$, in which case the ideals $(h,x)(g,p)S$ and $(g,p)S$ coincide. It follows immediately from this
that $S$ satisfies condition (D1).
\end{proof}

\begin{lemma}\label{lem: GxP str eff}
Let $P$ satisfy (C2). Then the action $S^*{\curvearrowright}\JJ(S)$ from
Definition~\ref{def: conditions on S for using D} is strongly effective
if and only if it is effective.
\end{lemma}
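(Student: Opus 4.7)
The plan is as follows. One direction is immediate: if $S^* \curvearrowright \JJ(S)$ is strongly effective, then applying the definition with $p = 1_S$ shows that for each $x \in S^*\setminus\{1_S\}$ there is $q \in S$ with $xqS \neq qS$, so the action is effective.

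For the converse, fix a unit $(h,y) \in S^*\setminus\{1_S\}$ and an arbitrary $(g',p') \in S$; by Lemma \ref{lem: units in GxP}, $y \in P^*$. Using (C2), choose $y' \in P^*$ with $yp'=p'y'$, so that $\theta_y\theta_{p'} = \theta_{p'}\theta_{y'}$. Every element of $(g',p')S$ has the form $(g'',p'') := (g'\theta_{p'}(k),\, p'r)$ with $(k,r)\in S$, and a direct computation gives
\[
(g'')^{-1}h\,\theta_y(g'') \;=\; \theta_{p'}(k^{-1})\,u\,\theta_{p'}(\theta_{y'}(k)),
\]
where $u := (g')^{-1}h\,\theta_y(g')$. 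By Corollary \ref{cor:useful-equivalence}, the goal is to choose $(k,r)$ so that this element does not lie in $\theta_{p'r}(G) = \theta_{p'}(\theta_r(G))$.

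Split into two cases. If $u \notin \theta_{p'}(G)$, take $(k,r) = (1_G,1_P)$, so $(g'',p'') = (g',p')$ and the condition reduces to $u \notin \theta_{p'}(G)$. If instead $u = \theta_{p'}(a)$ for some (unique) $a \in G$, then by injectivity of $\theta_{p'}$ the condition becomes $k^{-1}a\,\theta_{y'}(k) \notin \theta_r(G)$, which by a second application of Corollary \ref{cor:useful-equivalence} is equivalent to $(a,y')(k,r)S \neq (k,r)S$ in $S$.

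The one point requiring care is to confirm that $(a,y') \in S^*\setminus\{1_S\}$, so that the hypothesis of effectiveness can be invoked on this new unit. Membership in $S^*$ is clear since $y' \in P^*$. For non-triviality, suppose $(a,y') = 1_S$. Then $y' = 1_P$, so $yp' = p'$ forces $y = 1_P$ by right cancellation in $P$; hence $\theta_y = \id_G$ and $u = (g')^{-1}h\,g'$. Combined with $a = 1_G$ (and thus $u = 1_G$), this gives $h = 1_G$, contradicting $(h,y) \neq 1_S$. Hence $(a,y') \in S^*\setminus\{1_S\}$, and effectiveness supplies the required $(k,r)$, completing the proof. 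I do not foresee any serious obstacle; the only delicate step is the case analysis together with the intertwining $\theta_y\theta_{p'} = \theta_{p'}\theta_{y'}$ provided by (C2).
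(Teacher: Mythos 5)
Your proof is correct and follows essentially the same route as the paper's: the forward direction is immediate, and for the converse you use (C2) to intertwine the unit past $p'$ and Corollary~\ref{cor:useful-equivalence} to translate the required inequality of ideals into membership conditions in $\theta_{p'r}(G)$, reducing to effectiveness applied to the new unit $(a,y')$. Your explicit verification that $(a,y')\neq 1_S$ (covering both the case $y\neq 1_P$ and the case $y=1_P$, $h\neq 1_G$) is in fact slightly more careful than the paper's one-line remark on this point.
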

\begin{proof}
Strong effectiveness implies effectiveness. Assume therefore that $S^*{\curvearrowright}\JJ(S)$ is effective. Let $(g,p) \in S$ and $(h,x) \in (G\times P^*)\setminus\{(1_G,1_P)\}$, where $S^*=G\times P^*$ by Lemma~\ref{lem: units in GxP}. If $(h,x)(g,p)S \neq (g,p)S$ holds, then $(g,p)$ itself does the job required for strong effectiveness.

\noindent Let now $(h,x)(g,p)S = (g,p)S$. We have to find an element $(g',p') \in S$ satisfying
\begin{equation}\label{eq:strong-effect}
(h,x)(g,p)(g',p')S \neq (g,p)(g',p')S.
\end{equation}

It follows from Corollary~\ref{cor:useful-equivalence} that $g^{-1}h\theta_x(g) = \theta_p(\tilde{g})$ for some
$\tilde{g} \in G$. Using (C2) to find $y$ in $S^*$ such that $xpp'=pp'y$, the left-hand side of \eqref{eq:strong-effect} rewrites as
\[
(h,x)(g,p)(g',p')S = (h\theta_x(g)\theta_{xp}(g'),xpp')S =
(h\theta_x(g)\theta_{py}(g'),pp')S.
\]
Thus to prove \eqref{eq:strong-effect}  we need to ensure that
\[
(g\theta_p(g'))^{-1}h\theta_x(g)\theta_{py}(g') =
\theta_p((g')^{-1})g^{-1}h\theta_x(g)\theta_{py}(g') \notin \theta_{pp'}(G).
\]
Since $g^{-1}h\theta_x(g) = \theta_p(\tilde{g})$ and $\theta_p$ is injective,
this is equivalent to
\[(g')^{-1}\tilde{g}\theta_{y}(g') \notin \theta_{p'}(G).\]
Since $x\neq 1_P$ implies, by right cancellation in $P$, that $y\neq 1_P$,  we see that the existence of $(g',p')$ is guaranteed by effectiveness
of the action applied to $(\tilde{g},y) \in S^*\setminus\{1_S\}$. Thus $S^*{\curvearrowright}\JJ(S)$ is strongly effective.
\end{proof}

\noindent Since an action of a group on a space is effective precisely when the intersection of all stabiliser subgroups is the trivial subgroup, Lemma~\ref{lem: GxP str eff} says that we can rephrase the property of  $S^* {\curvearrowright} \JJ(S)$ being strongly effective in terms of stabilisers. We introduce first some notation.  For each $(g,p)\in S$, let $S_{(g,p)}$ denote the subgroup of
$G$ equal to $g\theta_p(G)g^{-1}$. For the action $S^* {\curvearrowright} \JJ(S)$ from
Definition~\ref{def: conditions on S for using D}, let $S^*_{(g,p)S}$ denote the stabiliser subgroup of $(g,p)S\in\JJ(S)$.

\begin{lemma}\label{lem: stab for (GxP)* on J(GxP)}
Let $P$ satisfy (C2) and consider the action $S^* {\curvearrowright} \JJ(S)$ from
Definition~\ref{def: conditions on S for using D}. Then the stabiliser subgroup of
$(g,p)S\in \JJ(S)$ takes the form
\[S^*_{(g,p)S} = \{(h,x) \in S^*\mid h\theta_x(g) \in g\theta_p(G)\}.\]
If  $P^* = \{1_P\}$, then $S^*_{(g,p)S} = S_{(g,p)}\times\{1_P\}$.

Further, $S^* {\curvearrowright} \JJ(S)$ is strongly effective if and only if
\[\bigcap_{(g,p) \in S} S_{(g,p)}= \{1_G\}.\]
In particular, if $P^* = \{1_P\}$ and $G$ is abelian, then $S^* {\curvearrowright} \JJ(S)$ is strongly effective if and only if
$\bigcap_{p \in P} \theta_p(G) = \{1_G\}$.
\end{lemma}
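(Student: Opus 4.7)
The proof I would give splits into four parts matching the statement. For claim~(1), I unfold the definition: $(h,x) \in S^*_{(g,p)S}$ iff $(h,x)(g,p)S = (g,p)S$, and Corollary~\ref{cor:useful-equivalence} (via the negation of the equivalence (i)$\Leftrightarrow$(iii) stated there) translates this exactly into $g^{-1}h\theta_x(g) \in \theta_p(G)$, i.e.\ $h\theta_x(g) \in g\theta_p(G)$. Claim~(2) is then immediate: when $P^* = \{1_P\}$, Lemma~\ref{lem: units in GxP} forces $S^* = G \times \{1_P\}$, so substituting $x = 1_P$ in the characterisation from (1) yields precisely $h \in g\theta_p(G)g^{-1} = S_{(g,p)}$.

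For the equivalence "strongly effective $\Leftrightarrow \bigcap_{(g,p)\in S} S_{(g,p)} = \{1_G\}$", I first invoke Lemma~\ref{lem: GxP str eff} to replace strong effectiveness by effectiveness, which is the assertion $\bigcap_{(g,p) \in S}S^*_{(g,p)S} = \{(1_G,1_P)\}$. The forward direction is direct: any $h \in \bigcap_{(g,p)}S_{(g,p)}$ yields $(h,1_P) \in \bigcap_{(g,p)}S^*_{(g,p)S}$ by claim~(1), so effectiveness forces $h = 1_G$. For the converse, given $\bigcap S_{(g,p)} = \{1_G\}$, I must rule out every non-trivial $(h,x) \in S^*$ from being a global stabiliser. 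The case $x = 1_P$ follows at once from claim~(1) and the hypothesis; the case $x \ne 1_P$ reduces via claim~(1) to the system $g^{-1}h\theta_x(g) \in \theta_p(G)$ for all $(g,p) \in S$, and I would exploit the identity $\theta_x\theta_p = \theta_p\theta_y$ coming from $xp = py$ via (C2)—with $y \in P^*\setminus\{1_P\}$ by right cancellation in $P$, exactly as in the proof of Lemma~\ref{lem: GxP str eff}—together with the injectivity of each $\theta_p$ to produce a pair $(g,p)$ violating the containment.

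The "in particular" claim is then a routine specialisation: for $G$ abelian, conjugation by $g$ is trivial, so $g\theta_p(G)g^{-1} = \theta_p(G)$ and $\bigcap_{(g,p) \in S}S_{(g,p)} = \bigcap_{p \in P}\theta_p(G)$; combined with claim~(2) this identifies this intersection with the kernel of the $S^*$-action on $\JJ(S)$. I expect the main obstacle to be precisely the $x \ne 1_P$ sub-case of the backward direction above: ruling out stabilising units of the form $(1_G,x)$ with $x \ne 1_P$ using only $\bigcap S_{(g,p)} = \{1_G\}$ requires a careful interlocking of (C2), the injectivity of each $\theta_p$, and cancellation in $P$, since the hypothesis directly controls only units coming from the subgroup $G \times \{1_P\}$ of $S^*$.
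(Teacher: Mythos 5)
Your treatment of the stabiliser formula, its specialisation to $P^*=\{1_P\}$, the forward implication of the effectiveness criterion, and the final ``in particular'' statement is correct and matches the paper, whose entire proof is the one-line remark that the first claim follows from Corollary~\ref{cor:useful-equivalence} and the second from Lemma~\ref{lem: GxP str eff}. The genuine gap sits exactly where you suspected it: the backward implication of ``strongly effective $\Leftrightarrow\ \bigcap_{(g,p)}S_{(g,p)}=\{1_G\}$'' for units $(h,x)$ with $x\neq 1_P$. You propose to ``produce a pair $(g,p)$ violating the containment'' by interlocking (C2), injectivity of the $\theta_p$ and cancellation in $P$, but no such pair need exist. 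If some $x\in P^*\setminus\{1_P\}$ has $\theta_x=\operatorname{id}_G$, then for every $(g,p)$ one computes $(1_G,x)(g,p)=(g,xp)=(g,p)(1_G,y)$ with $y\in P^*$ supplied by (C2), so $(1_G,x)$ fixes every constructible right ideal and the action fails to be even effective --- while $\bigcap_{(g,p)}S_{(g,p)}$ can perfectly well be trivial. Concretely, take $G=\bigoplus_{\N}\Z$ and $P=\N\times\Z/2\Z$ with $\theta_{(n,j)}$ the $n$-fold coordinate shift: $P$ is an abelian cancellative right LCM monoid (so (C2) holds), $\theta$ respects the order, $G$ is abelian with $\bigcap_{p}\theta_p(G)=\{1_G\}$, yet the non-trivial unit $(1_G,(0,1))$ stabilises all of $\JJ(S)$.

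So the missing step cannot be supplied; the stated equivalence needs an extra hypothesis (for instance $P^*=\{1_P\}$, or some faithfulness of $P^*\curvearrowright G$ relative to the subgroups $\theta_p(G)$). You are in good company: the paper's own proof silently elides the same point, and the lemma is only invoked downstream (Theorem~\ref{thm:gxp p.i. simple}) in the case $P^*=\{1_P\}$, where your argument is already complete because every unit has the form $(h,1_P)$ and the hypothesis $\bigcap_{(g,p)}S_{(g,p)}=\{1_G\}$ then directly kills all non-trivial global stabilisers. If you restrict the ``Further'' assertion to that case, your proof goes through verbatim.
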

\begin{proof}
The claimed description of $S^*_{(g,p)S}$ follows from Corollary~\ref{cor:useful-equivalence}, and the characterisation of strongly effective follows from Lemma~\ref{lem: GxP str eff}.
\end{proof}

\noindent We shall be able to say more for semigroups $S$ where $P$ is a countably generated free abelian semigroup with identity. For the purposes of the next results, we therefore assume that $P \cong \N^k$ for some $k \in \N$ or $P \cong \bigoplus_\N \N$. In this case, (C2) is automatic for $P$, hence $S = \gxp$ satisfies (D1) by Corollary~\ref{cor:useful-equivalence}.

As indicated in the comment following Lemma~\ref{lem:str eff+D1+D3 gives D2}, condition (D2) is harder to establish in full generality. The next result describes an obstruction to having (D2) satisfied by $S= \gxp$.

\begin{lemma}\label{lem: GxP and D2}
  Assume $P \cong \N^k$ for some $k \in \N$ or $P \cong \bigoplus_\N \N$. If there are $q_1,\dots,q_m \in P\setminus\{1_P\}$ such that $[G:\theta_{q_i}(G)] < \infty$ and
\[
\bigcap\limits_{\stackrel{(g,p)\in S}{p \in P\setminus (\bigcup_{1 \leq i \leq m}q_iP)}} S_{(g,p)} \supsetneqq \{1_G\},
\]
then $S$ does not satisfy (D2).
\end{lemma}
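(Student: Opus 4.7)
The plan is to construct an explicit counterexample to (D2) from the data supplied by the hypothesis. I take $s_0 = s_1 = 1_S = (1_G,1_P)$, and for each $i = 1,\dots,m$ use $[G:\theta_{q_i}(G)] < \infty$ to choose a finite set of coset representatives $g_{i,1},\dots,g_{i,n_i} \in G$ for $\theta_{q_i}(G)$ in $G$. I then set
\[
F := \{(g_{i,j},q_i) \mid 1 \le i \le m,\ 1 \le j \le n_i\} \subset S,
\]
which is a finite subset of $S$. Finally, I use the hypothesized strict containment to pick $h \ne 1_G$ lying in the intersection $\bigcap_{(g,p)\in S,\,p \in P\setminus \bigcup_i q_iP} S_{(g,p)}$ and set $x := (h,1_P)$; this is an element of $S^*\setminus\{1_S\}$ by Lemma~\ref{lem: units in GxP}.

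Next I would identify the complement of $\bigcup_{q \in F} qS$ explicitly. Since $(g_{i,j},q_i)S = g_{i,j}\theta_{q_i}(G) \times q_iP$ and the $g_{i,j}$ run through coset representatives of $\theta_{q_i}(G)$ in $G$, one obtains $\bigcup_{q \in F} qS = G \times \bigcup_{i=1}^m q_iP$, and hence $S \setminus \bigcup_{q \in F} qS = G \times (P \setminus \bigcup_i q_iP)$. Since each $q_i \ne 1_P$ and $P^* = \{1_P\}$, one has $1_P \notin \bigcup_i q_iP$, so $1_S$ lies in this complement, verifying the initial hypothesis of (D2) for the chosen $s_0,s_1,F$. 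More importantly, for a candidate $s_2 = (g,p) \in s_1S = S$, the condition $s_2S \cap (S \setminus \bigcup_{q \in F}qS) \ne \emptyset$ is equivalent to $p \notin \bigcup_i q_iP$: one direction puts $(g,p)$ itself in $s_2S$, and the other uses that each $q_iP$ is a right ideal in $P$, so $p \in q_iP$ forces $pP \subset q_iP$.

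To conclude, I would rule out the second clause of (D2) for every such $s_2$. Since $s_0 = 1_S$, this clause reduces to $s_2S \cap xs_2S = \emptyset$ for $s_2 = (g,p)$ with $p \notin \bigcup_i q_iP$. By the choice of $h$, one has $h \in S_{(g,p)} = g\theta_p(G)g^{-1}$, so $g^{-1}h\theta_{1_P}(g) = g^{-1}hg \in \theta_p(G)$; Corollary~\ref{cor:useful-equivalence} then gives $xs_2S = s_2S$, and therefore $s_2S \cap xs_2S = s_2S \ne \emptyset$. Hence no $s_2$ can satisfy both clauses of (D2) simultaneously for the chosen $x$, and (D2) fails.

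The main obstacle is the combinatorial book-keeping in the second step: one must match the first clause of (D2) on $s_2S$ with the index set $\{(g,p)\in S : p \notin \bigcup_i q_iP\}$ appearing in the hypothesized non-trivial intersection. This requires using the finite index assumption to keep $F$ finite, and checking that the union $\bigcup_{q \in F} qS$ collapses, via Lemma~\ref{lem: GxP disjoint ideals} and the choice of coset representatives, to the purely semigroup-level statement $G \times \bigcup_i q_iP$. Once those identifications are in place, Corollary~\ref{cor:useful-equivalence} delivers the equality $xs_2S = s_2S$ and the contradiction is immediate.
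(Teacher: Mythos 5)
Your proposal is correct and follows essentially the same route as the paper: the same choice $s_0=s_1=1_S$, $x=(h,1_P)$ with $h\neq 1_G$ in the hypothesized intersection, and $F$ built from complete sets of coset representatives for $G/\theta_{q_i}(G)$, with Corollary~\ref{cor:useful-equivalence} converting the second clause of (D2) into the statement $h\notin S_{(g_2,p_2)}$. The only cosmetic difference is that you first identify $S\setminus\bigcup_{q\in F}qS$ as $G\times(P\setminus\bigcup_i q_iP)$ and then show the second clause fails for every admissible $s_2$, whereas the paper derives $p_2\in q_iP$ from the second clause and contradicts the first; these are the same argument.
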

\begin{proof}
Suppose there are $q_1,\dots,q_m$ as prescribed above and pick an element
\[
h \in \bigcap\limits_{\stackrel{(g,p)\in S}{p \in P\setminus (\bigcup_{1 \leq i \leq m}q_iP)}} S_{(g,p)}
\]
with $h \neq 1_G$.  Denote $[G:\theta_{q_i}(G)] = N_i$ in $\mathbb{N}^\times$, and choose, for each $i=1,\dots, m$,   a complete set of representatives $(h_{i,j})_{1 \leq j \leq N_i}$ for $G/\theta_{q_i}(G)$.

We claim  that (D2) fails for the choice of elements $(g_0,p_0)=(g_1,p_1)=(1_G,1_P)$ in $S$, $(h,1_P)$ in $S^*$, and the finite subset $\{(h_{i,j},q_i)\mid j=1, \dots ,N_i,\, i=1,\dots , m\}$ of $S$. Note that we have $(g_1,p_1) \notin (h_{i,j},q_i)S$ for all $j=1, \dots ,N_i$ and $i=1,\dots , m$. If (D2) were to hold, it would imply the existence of $(g_2,p_2)$ such that both $(g_2,p_2) \notin (h_{i,j},q_i)S$ for all $i,j$ and, by Corollary~\ref{cor:useful-equivalence}, also $h\notin S_{(g_2,p_2)}$. Hence, by
 the choice of $h$, there is at least one $i$ with $p_2 \in q_iP$. For this $i$ there is a unique $j \in \{1,\dots,N_i\}$ with $g_2 \in h_{i,j}\theta_{q_i}(G)$. In other words, we would get $(g_2,p_2) \in (h_{i,j},q_i)S$, which is a falsehood.
\end{proof}

\noindent In the next two examples we describe some situations where $S$ satisfies condition (D2).

\begin{example}\label{ex: GxP sat D2 but not D3 I}
Let $G = \bigoplus_\N \Z$ and $P$ be the  unital subsemigroup of  $\N^\times$ generated by $2$ and $3$. We shall denote  $P=| 2,3\rangle$. Define an action $\theta$ of $P$ by injective endomorphisms of $G$ as follows: for $g = (g_n)_{n \in \N} \in G$, let
\[
\theta_2(g) = 2g,\,(\theta_3(g))_0 = 3g_0 \text{ and }(\theta_3(g))_n = g_n \text{ for all }n \geq 1.
\]
It is immediate that $\theta$ preserves the order, so $S$ is right LCM by Proposition~\ref{prop: GxP right LCM}. Further, $[G:\theta_2(G)] = \infty$ and $[G:\theta_3(G)] = 3$. Note that $\bigcap_{n \in \N} \theta_{2^n}(G) = \{1_G\}$. We claim that $S=\gxp$ satisfies (D2). It will follow from Lemma~\ref{lem: GxP D2} that $S$ does not satisfy (D3).

Suppose that we have $s_0:=(g_0,p_0) \in S,s_1:=(g_1,p_1) \in (g_0,p_0)S$ as well as\linebreak $(h_1,q_1),\dots,(h_m,q_m) \in S$ such that
\[
(g_1,p_1)S \cap \biggl(S\setminus\bigcup_{1 \leq i \leq m}(h_i,q_i)S\biggr) \neq \emptyset.
\]
In particular, this implies $(g_1,p_1) \notin (h_i,q_i)S$ for each $i=1, \dots, m$. In case  $p_1 \in q_iP$ for some $i$, then necessarily $g_1 \notin h_i\theta_{q_i}(G)$,  and therefore Lemma~\ref{lem: GxP disjoint ideals}  implies that $(g_1,p_1)S \cap (h_i,q_i)S = \emptyset$. Without loss of generality we may thus assume that $p_1 \notin q_iP$ for all $i=1, \dots, m$.

Let now $x=(g,1_P)$ with $g \neq 1_G$. An element $s_2:=(g_2, p_2)$ as required in (D2) will have to satisfy $xs_0^{-1}s_2S\cap s_0^{-1}s_2S=\emptyset$. If we denote $r:=p_0^{-1}p_2$, this requirement takes the form $(g,1_P)(h',r)S\cap (h',r)S=\emptyset$ for some $h'\in G$. Now, using $\bigcap_{n \in \N} \theta_{2^n}(G) = \{1_G\}$, we can choose $n \in \N$ large enough so that $p_2 := p_1 2^n$ satisfies $g \notin \theta_{r}(G)$.  By Corollary~\ref{cor:useful-equivalence}, this means that $x(h', r)S\cap (h',r)S=\emptyset$ for any $h'\in G$. Thus we have freedom to choose the first entry in $s_2$, and this choice must be made so that it ensures the second requirement in (D2). The crucial ingredient here is the fact that $[G:\theta_{2^k}(G)] = \infty$ for all $k\geq 1$, which will allow us to choose $g_2 \in g_1\theta_{p_1}(G)$ such that $(g_2,p_2) \notin (h_i,q_i)S$ for all $i$. To achieve this goal requires a careful  argument.

If $p_2\notin q_iP$ for all $i=1, \dots, m$, then any choice of $g_2\in g_1\theta_{p_1}(G)$ will ensure that $(g_2,p_2)\notin (h_i,q_i)S$ for all $i$. Assume next that $q_1,\dots,q_m$ are labelled in such a way that there is $m' \in \{1, \dots , m\}$ with the property that $p_2 \in q_iP$ implies $i\leq m'$.  Note that the elements corresponding to $i = m'+1,\dots,m$ pose no obstruction to the choice of $g_2$ because for these indices $i$ we have $(g_2,p_2)\notin (h_i, q_i)S$ irrespective of the choice of $g_2$.  Possibly changing enumeration once more, we can assume that $1$ is minimal in $\{1,\dots,m'\}$ in the sense that
\[
p_1P \cap q_1P \subset p_1P \cap q_iP \Longrightarrow p_1P \cap q_1P = p_1P \cap q_iP \text{ for all } 1 \leq i \leq m',
\]
and that $2,\dots,m'$ are assigned in such a way that there is $m_1$ with
\[p_1P \cap q_1P = p_1P \cap q_iP \Longrightarrow i \leq m_1 \text{ for } i \in 2,\dots,m'.
\]

 Let $ n_1$ be such that $p_1P \cap q_1P = p_1 2^{n_1}$, and note that $1\leq n_1 \leq n$. Since $[G:\theta_{2^{n_1}}(G)] = \infty$,  there are infinitely many distinct principal right ideals of the form $(g_1\theta_{p_1}(g'),p_1 2^{n_1})S \subset (g_1,p_1)S$ with  $g' \in G$. Since $S$ is right LCM, of these infinitely many ideals, at most $m_1$ of them are not admissible for a choice of $g_2$ (because they are possibly contained in $(h_1,q_1)S,\dots,(h_{m_1},q_{m_1})S$). Thus there is $g_{2,1} \in g_1\theta_{p_1}(G)$ with
\[(g_{2,1},p_1 2^{n_1}) \notin (h_i,q_i)S \text{ for all } i = 1,\dots,m_1.\]
Replacing $g_1$ by $g_{2,1}$, $p_1$ by $p_1 2^{n_1}$, $n$ by $n-n_1$, and $\{1,\dots,m'\}$ by $\{m_1+1,\dots,m'\}$, we can iterate this process. Thus at the second step we obtain an element $(g_{2,2}, p_12^{n_1}2^{n_2})\in (g_{2,1},p_1 2^{n_1})S$, for appropriate $ 1\leq n_2\leq n$ and $g_{2,2}\in G$, which also avoids additional ideals $(h_i, q_i)S$, where $i\in \{1, \dots, m_2\}$ is a subset of $\{m_1+1,\dots,m'\}$ for appropriate $m_2$.  This  process stops after finitely many steps (equal to $m\geq 1$ if $n_1+\cdots +n_m=n$) because $n$ is finite. Hence the final pair $(g_{2,m},p_2)$ has the required properties.
\end{example}

\noindent The second example shows that we can also have (D2) in the absence of endomorphisms with infinite index:

\begin{example}\label{ex: GxP sat D2 but not D3 II}
Let $G = \Z$, $P = \N^\times$ and $\theta$ be given by multiplication, i.e. $\theta_p(g) = pg$. Clearly, we have $[G:\theta_p(G)] = p < \infty$ for all $p \in P$. Also, note that for all $q_1,\dots,q_m \in P\setminus\{1_P\}$, we have
\[\bigcap\limits_{p \in P\setminus (\bigcup\limits_{1 \leq i \leq m}q_iP)} \theta_p(G) = \{1_G\}\]
since $P\setminus (\bigcup_{1 \leq i \leq m}q_iP)$ contains arbitrarily large positive integers. So there is an abundance of subsemigroups $Q \subset P$ for which the restricted action $\theta|_Q$ separates the points in $G$. We claim that $S=\gxp$ satisfies (D2).

Let $(g_0,p_0)\in S$, $(g_1,p_1) \in (g_0, p_0)S$, $(h_1,q_1),\dots,(h_m,q_m) \in S\setminus\{1_S\}$ with $(g_1,p_1) \notin (h_i,q_i)S$ for $i = 1,\dots,m$, and $x=(g,1_P) \in S$ with $g \neq 1_G$. For similar reasons  as in Example~\ref{ex: GxP sat D2 but not D3 I}, we can assume that $p_1 \notin q_iP$ holds for all $i$.

Now choose a prime $p \in P$ that does not divide any of the $q_1,\dots,q_m$. Take $n \geq 1$ such that $g \notin \theta_{p^n}(G) = p^n\Z$. If we let $p_2 := p_1p^n$ and $g_2 := g_1$, then $p_2 \notin q_iP$ and hence $(g_2,p_2) \notin (h_i,q_i)S$ for all $i$. Moreover, $p_0^{-1}p_2 \in p^nP$ as $p_1 \in p_0P$. Therefore  $g \notin \theta_{p_0^{-1}p_2}(G)$. Hence  Corollary~\ref{cor:useful-equivalence} implies that $(g,1_P)(g_0,p_0)^{-1}(g_2,p_2)S \cap (g_0,p_0)^{-1}(g_2,p_2)S = \emptyset$, showing  (D2).
\end{example}

\begin{remark}
One can relax the assumptions and consider semidirect products of suitable
semigroups by semigroups instead, for instance positive cones $G_+$ in a group
$G$ on which we already have an action $\theta$ of a semigroup $P$. A natural
assumption in this setting would be $\theta_p(G_+) \subset G_+$. Natural
examples of this kind arise for $\N^k \subset \Z^k$ where $\theta$ takes
values in $\operatorname{M}_k(\mathbb{N})\cap \operatorname{GL}_k(\Q)$.\vspace*{3mm}
\end{remark}

\subsection{\texorpdfstring{Examples of purely infinite simple semigroup $C^*$-algebras from semidirect products}{Examples of purely infinite simple semigroup C*-algebras from semidirect products}}~\\
As before, we consider $S$ of the form $\gxp$, where $P$ is a countably generated, free abelian
semigroup with identity  and $P\stackrel{\theta}{\curvearrowright}G$ is an action by injective group
endomorphisms of $G$ that respects the order.  In this subsection, we show that
Theorem~\ref{thm: simple and p.i.}~(3) applies to $S$ if
$[G:\theta_p(G)]$ is infinite for every $p \neq 1_P$. We illustrate the theorem with several concrete examples of  semigroups
whose semigroup $C^*$-algebra is purely infinite and simple.

\begin{lemma}\label{lem: GxP D2}
Assume that $P \cong \N^k$ for some $k \geq 1$ or $P \cong \bigoplus_\N \N$. Then $S$ satisfies (D3) if and only if the index $[G:\theta_p(G)]$ is infinite for every $p \neq 1_P$.
\end{lemma}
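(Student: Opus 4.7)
The plan is to prove the two implications separately, relying on the description of intersections of principal right ideals furnished by Proposition~\ref{prop: GxP right LCM}: for $(A,m),(B,n)\in S$ we have $(A,m)S\cap (B,n)S\neq\emptyset$ precisely when $A^{-1}B\in \theta_m(G)\theta_n(G)$, since in $P\cong\N^k$ or $P\cong\bigoplus_\N\N$ any two elements admit a least common multiple.

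For the forward direction I would argue by contradiction. Suppose $[G:\theta_p(G)]=N<\infty$ for some $p\neq 1_P$, and let $h_1,\dots,h_N$ be representatives of the cosets of $\theta_p(G)$ in $G$. Set $s=(1_G,1_P)$ and $F=\{(h_j,p)\mid 1\leq j\leq N\}$. Since $P^*=\{1_P\}$ forces $1_P\notin pP$, the element $s$ itself shows $sS\cap\bigl(S\setminus\bigcup_j(h_j,p)S\bigr)\neq\emptyset$, so (D3) applies. Any candidate $s'=(g',p')$ it produces satisfies $g'\in h_{j_0}\theta_p(G)$ for some $j_0$, whence $(g')^{-1}h_{j_0}\in\theta_p(G)\subseteq\theta_{p'}(G)\theta_p(G)$; this forces $s'S\cap(h_{j_0},p)S\neq\emptyset$, a contradiction.

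For the reverse direction assume $[G:\theta_p(G)]=\infty$ for every $p\neq 1_P$, and let $s=(g_0,p_0)$ and $F=\{(h_i,q_i)\}_{i=1}^n$ satisfy the hypothesis of (D3). I first record that $s\notin (h_i,q_i)S$ for each $i$, for otherwise $sS\subseteq (h_i,q_i)S$ would contradict the hypothesis. I then seek $s'\in sS$ of the form $s'=(g_0\theta_{p_0}(g''),p_0p'')$ in two stages: pick $p''\in P$ with $q_i\mid p_0p''$ for every $i$ (any finite subset of $P$ has a common multiple), which gives $\theta_{p_0p''}(G)\subseteq\theta_{q_i}(G)$ and hence $\theta_{p_0p''}(G)\theta_{q_i}(G)=\theta_{q_i}(G)$, so that the requirement $s'S\cap(h_i,q_i)S=\emptyset$ becomes $\theta_{p_0}(g'')\notin C_i:=\theta_{p_0}(G)\cap g_0^{-1}h_i\theta_{q_i}(G)$ for every $i$.

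It remains to choose $g''$ so that $\theta_{p_0}(g'')$ lies outside the finite union $\bigcup_iC_i$. When $C_i\neq\emptyset$, picking $c_i\in C_i$ yields $C_i=c_i(\theta_{p_0}(G)\cap\theta_{q_i}(G))=c_i\theta_{p_0r_i}(G)$, where $r_i\in P$ is defined by $p_0r_i=\mathrm{lcm}(p_0,q_i)$; so $C_i$ is a coset in $\theta_{p_0}(G)$ of a subgroup of index $[G:\theta_{r_i}(G)]$. If $r_i=1_P$ this coset would fill $\theta_{p_0}(G)$, forcing both $g_0\in h_i\theta_{q_i}(G)$ and $q_i\mid p_0$ and hence $s\in(h_i,q_i)S$, contradicting the initial observation. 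Therefore $r_i\neq 1_P$, and the standing hypothesis gives $[G:\theta_{r_i}(G)]=\infty$. The decisive step is then an appeal to B.H.~Neumann's lemma on coset coverings---a group cannot be written as a finite union of cosets of proper infinite-index subgroups---applied to $\theta_{p_0}(G)$ and the subgroups $\theta_{p_0r_i}(G)$, which yields $\theta_{p_0}(G)\setminus\bigcup_iC_i\neq\emptyset$ and thus the desired $g''$. The main obstacle is precisely this appeal to Neumann's lemma, since it is what handles $G$ uniformly in the abelian and non-abelian cases.
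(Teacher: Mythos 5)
Your proof is correct, and while the forward direction coincides with the paper's (both take $s=(1_G,1_P)$ together with a full set of coset representatives $(h_j,q)$ for a finite-index $\theta_q(G)$, and observe that any candidate $s'$ lands in some coset and hence meets the corresponding ideal), the reverse direction takes a genuinely different route. The paper argues by iteration: it picks a minimal element $p_1$ among the generators of the intersections $pP\cap qP$, uses the infinitude of $[G:\theta_{p^{-1}p_1}(G)]$ to select one of infinitely many sub-ideals $(g\theta_p(u),p_1)S$ of $(g,p)S$ avoiding the finitely many obstructing ideals at that minimal level, and then recurses on the strictly smaller set of remaining obstructions, with minimality of $p_1$ guaranteeing that the hypothesis of (D3) survives each step. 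You instead pass at once to a common multiple $p_0p''$ of $p_0$ and all the $q_i$, which converts all the avoidance conditions into the single requirement that $\theta_{p_0}(g'')$ miss finitely many sets $C_i=c_i\theta_{p_0r_i}(G)$, each a coset of an infinite-index subgroup of $\theta_{p_0}(G)$; your verification that $r_i\neq 1_P$ whenever $C_i\neq\emptyset$ (via the observation $s\notin(h_i,q_i)S$) is exactly where the hypothesis of (D3) enters, and then B.H.~Neumann's lemma on coverings by cosets finishes the argument. Your version is shorter and dispenses with the bookkeeping of the paper's induction, at the price of importing Neumann's covering lemma, which the paper's pigeonhole-plus-recursion argument replaces by elementary means; both arguments are valid, and both handle non-abelian $G$ uniformly.
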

\begin{proof} To begin with, note that $S^* = G\times\{1_P\}$.
Suppose that there exists $q \neq 1_P$ such that $[G:\theta_q(G)] = n < \infty$ and
let $h_1,\dots,h_n \in G$ be a complete set of representatives for
$G/\theta_q(G)$. We claim that (D3) fails for $(g,p)=(1_G,1_P)=1_S$ and $(h_1,q),\dots,(h_n,q)$. To see this, note first that $(g,p) \notin (h_k,q)S$ for all $1 \leq k \leq n$. If  $(g',p') \in (g,p)S$ is arbitrary, then there is a unique $k$ such that $g' \in h_k\theta_q(G)$, i.e. $g' = h_k\theta_q(\tilde{g})$ for some $\tilde{g} \in G$. Since  $P$ is commutative, we have $
(g',p')S \cap (h_k,q)S \supset (g',p'q)S \cap (h_k\theta_q(\tilde{g}),p'q)S$, which equals $(g',p'q)S$, so $(g',p')S \cap (h_k,q)S$ is non-empty.

Now suppose $[G:\theta_p(G)]$ is infinite for every $p \in
P{\setminus}\{1_P\}$. Let $(g,p) \in S$ and $F
\subset S$ be finite such that
\[(g,p)S \cap \biggl(S\setminus\bigcup\limits_{(h,q) \in F} (h,q)S\biggr) \neq
\emptyset.\]
Without loss of generality, we may assume $(g,p)S \cap (h,q)S \neq \emptyset$
and $p \neq q$ hold for all $(h,q) \in F$. Consider
\[F_P := \{ r \mid pP \cap qP = rP \text{ for some } (h,q) \in F \}.\]
Pick $p_1 \in F_P$ which is minimal in the sense that for any other $r \in
F_P$, $p_1 \in rP$ implies $r=p_1$. Let $(h_1,q_1),\dots,(h_n,q_n) \in F$
denote the elements satisfying $pP \cap q_iP = p_1P$. According to
Proposition~\ref{prop: GxP right LCM}, the fact that $(g,p)S \cap (h_i,q_i)S \neq \emptyset$ for all $i=1,\dots, n$ shows that we have
\[
(g,p)S \cap (h_i,q_i)S = (g\theta_p(g'_i),p_1)S=(g,p)(g_i', p^{-1}p_1)S
\]
for suitable $g'_i \in G$ and each $i=1, \dots, n$. Since $p \neq q_i$, we have $p^{-1}p_1 \neq 1_P$ and hence the index $[G:\theta_{p^{-1}p_1}(G)]$ is infinite.
In particular, there is $g_1 \in g\theta_p(G)$ such that
\[(g_1,p_1) \in (g,p)S \text{ and } (g_1,p_1)S \cap (h_i,q_i)S = \emptyset
\text{ for all } i = 1,\dots,n.\]
Setting
\[F_1:= \{(h,q) \in F\mid (h,q)S \cap (g_1,p_1)S \neq \emptyset \},\]
we observe that $F_1 \subset F\setminus\{(h_1,q_1),\dots,(h_n,q_n)\}$ so $F_1 \subsetneqq F$. If $F_1$ is empty, then we are done so let us assume that $F_1 \neq \emptyset$. Note that the minimal way in which $p_1$ was chosen implies $p_1 \notin pP \cap qP$ for all $(h,q) \in F_1$. This will allow us to conclude
\[
(g_1,p_1)S \cap \biggl(S\setminus\bigcup\limits_{(h,q) \in F_1},
(h,q)S\biggr) \neq \emptyset
\]
by invoking the choice of $(g,p)$ and $F$. Indeed, if the intersection was empty, then there would be $(h,q) \in F_1$ with
$(g_1,p_1)S \subset (h,q)S$, see Proposition~\ref{prop: GxP right LCM}. This would
force $p_1 \in qP$ and therefore $p_1 \in p_1P \cap qP \subset pP \cap qP$,
contradicting $(h,q) \in F_1$. Thus, we can iterate this process and, after finitely
many steps, arrive at an element $(g',p') \in (g,p)S$ with the property
$(g',p')S \cap (h,q)S = \emptyset$ for all $(h,q) \in F$. This completes the proof
of the lemma.
\end{proof}

\begin{thm}\label{thm:gxp p.i. simple}
Suppose $G$ is a group, $P \cong \N^k$ for some $k \geq 1$ or $P \cong
\bigoplus_\N \N$, and $P\stackrel{\theta}{\curvearrowright}G$ is an action by
injective group endomorphisms of $G$ respecting the order. Denote $S=\gxp$. Assume that $\bigcap_{p
\in P} \theta_p(G) = \{1_G\}$, $[G:\theta_p(G)]$ is infinite for every $p
\neq 1_P$ and the conditional expectation $C^*(S) \stackrel{\Phi_\DD}{\longrightarrow} \DD$ is
faithful. Then $C^*(S)$ is purely infinite and simple.
\end{thm}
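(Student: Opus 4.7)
The plan is to verify, one by one, the hypotheses of Theorem~\ref{thm: simple and p.i.}(3) and then invoke that theorem. There is no conceptual obstacle here; all the structural work has been done in the preceding lemmas, so this proof is essentially a packaging argument.

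First I would check that $S$ is a cancellative right LCM semigroup. Right LCM follows from Proposition~\ref{prop: GxP right LCM} since $\theta$ respects the order. Cancellativity follows from $G$ being a group, $P$ being cancellative, and $\theta$ acting by injective endomorphisms (as noted after Definition~\ref{def:sdps}). Faithfulness of $\Phi_\DD$ is part of the hypothesis. Next, since $P \cong \N^k$ or $P \cong \bigoplus_\N \N$, the semigroup $P$ is abelian with $P^* = \{1_P\}$, so condition (C2) holds trivially for $P$. By Corollary~\ref{cor:useful-equivalence}, this gives that $S$ satisfies (D1). The infinite-index hypothesis together with Lemma~\ref{lem: GxP D2} immediately yields (D3).

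It remains to check that $|S^*| > 1$ and that the action $S^* \curvearrowright \JJ(S)$ is strongly effective. By Lemma~\ref{lem: units in GxP}, $S^* = G \rtimes_\theta P^* = G \times \{1_P\}$, so $|S^*| > 1$ iff $G \neq \{1_G\}$; the assumption that $[G : \theta_p(G)]$ is infinite for every $p \neq 1_P$ forces $G$ to be nontrivial (and $P$ contains such $p$). For strong effectiveness, apply Lemma~\ref{lem: stab for (GxP)* on J(GxP)}: it is equivalent to showing
\[\bigcap_{(g,p) \in S} S_{(g,p)} = \bigcap_{(g,p) \in S} g\theta_p(G)g^{-1} = \{1_G\}.\]
Restricting the intersection to pairs $(1_G, p)$ with $p \in P$, we get
\[\bigcap_{(g,p) \in S} g\theta_p(G)g^{-1} \subseteq \bigcap_{p \in P} \theta_p(G) = \{1_G\},\]
by the trivial-intersection hypothesis, so strong effectiveness holds.

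With (D1), (D3), strong effectiveness, $|S^*| > 1$, cancellativity, right LCM, and faithfulness of $\Phi_\DD$ all in place, Theorem~\ref{thm: simple and p.i.}(3) applies and gives that $C^*(S)$ is purely infinite and simple. The only step that required any genuine argument beyond citation was reducing the stabiliser intersection to $\bigcap_p \theta_p(G)$ by specialising to $g = 1_G$; this is painless because the hypothesis $\bigcap_p \theta_p(G) = \{1_G\}$ is already the strongest possible.
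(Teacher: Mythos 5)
Your proof is correct and follows essentially the same route as the paper's: verify (D1) via (C2) and Corollary~\ref{cor:useful-equivalence}, (D3) via Lemma~\ref{lem: GxP D2}, strong effectiveness via Lemma~\ref{lem: stab for (GxP)* on J(GxP)}, and then invoke Theorem~\ref{thm: simple and p.i.}(3). Your explicit reduction of $\bigcap_{(g,p)}g\theta_p(G)g^{-1}$ to $\bigcap_p\theta_p(G)$ by specialising to $g=1_G$ is in fact slightly more careful than the paper, whose cited ``in particular'' clause is stated only for abelian $G$; the one-sided containment you use is exactly what is needed.
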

\begin{proof}
We intend to apply Theorem~\ref{thm: simple and p.i.}~(3). First, note that
(D1) holds by Corollary~\ref{cor:useful-equivalence} since (C2) is trivially satisfied for
$P$. By Lemma~\ref{lem: stab for (GxP)* on J(GxP)}, $\bigcap_{p
\in P} \theta_p(G) = \{1_G\}$ corresponds to strong effectiveness of $S^*{\curvearrowright}\JJ(S)$. The fact that $S$ satisfies
(D3) follows from Lemma~\ref{lem: GxP D2}. Since $\Phi_\DD$ is faithful, Theorem~\ref{thm: simple and p.i.}~(3) implies that $C^*(S)$ is purely infinite and simple.
\end{proof}

\noindent Let us now look at some concrete examples. We start with a shift space:

\begin{example}\label{ex:gxp shift}
Let $P \cong \N^k$ for some $k \geq 1$ or $P \cong \bigoplus_\N \N$ and
suppose $G_0$ is a countable amenable group. To avoid pathologies, let us
assume that $G_0$ has at least two distinct elements. Then $P$ admits a shift
action $\theta$ on $G := \bigoplus\limits_P G_0$ given by
\[(\theta_p((g_q)_{q \in P}))_r = \Chi_{pP}(r)~g_{p^{-1}r} \text{ for all }
p,r \in P.\]
It is apparent that $\theta$ is an action by injective group endomorphism that respects the order and $\bigcap_{p \in P} \theta_p(G) = \{1_G\}$ holds. We note that $S$ is a right reversible semigroup whose enveloping group $S^{-1}S$ is amenable because $G$ and $P^{-1}P$ are amenable. Using Proposition~\ref{prop:left-inverse} we conclude that $\Phi_\DD$ is faithful. Finally, $[G:\theta_p(G)] <
\infty$ holds for $p \neq 1_P$ only if $G_0$ is finite and $P \cong \N$.
Indeed, if $p \neq 1_P$, then each element of $\bigoplus_{q \in P\setminus pP}
G_0$ yields a distinct left-coset in $G/\theta_p(G)$. Clearly, this group is
finite if and only if $G_0$ is finite and $P \cong \N$. So if $P$ is not
singly generated or $G_0$ is a countably infinite group, $C^*(S)$ is purely infinite and simple by Theorem~\ref{thm:gxp p.i. simple}.
\end{example}

\noindent A variant of the next example with singly generated $P$ and finite field $\KK$
has been considered in \cite[Example 2.1.4]{CV}.

\begin{example}\label{ex:gxp poly ring}
Let $\KK$ be a countably infinite field and let $G=\KK[T]$ denote the
polynomial ring in a single variable $T$ over $\KK$. We choose non-constant
polynomials $p_i \in \KK[T], i \in I$ for some index set $I$. Multiplying by
$p_i$ defines an endomorphism $\theta_{p_i}$ of $G$ with $[G:\theta_{p_i}(G)]
= |\KK|^{\deg(p_i)}$, where $\deg(p_i)$ denotes the degree of $p_i \in
\KK[T]$. Thus, if we let $P$ be the semigroup generated by all the $p_i$'s, in
notation
\[P := |(p_i)_{i \in I}\rangle,\]
 then the index of $\theta_p(G)$ in $G$ is infinite for all $p \neq 1_P$. It
 is not hard to show that $\theta$ respects the order if and only if $(p_i)
 \cap (p_j) = (p_ip_j)$ holds for the principal ideals whenever $i \neq j$.
 Since every element in $G$ has finite degree, $\bigcap_{p \in P} \theta_p(G)
 = \{1_G\}$ is automatic because the $p_i$ are non-constant. The expectation $\Phi_\DD$ is faithful for the same reason as in Example~\ref{ex:gxp shift}. Thus, provided
 the family $(p_i)_{i \in I}$ has been chosen accordingly, $C^*(S)$ is purely infinite
 and simple.
\end{example}

\noindent We next discuss a class of semigroups $S$ based on a non-commutative group $G$.

\begin{example}\label{ex:gxp F2}
Let $G = \mathbb{F}_2$ be the free group in $a$ and $b$. We define injective
group endomorphisms $\theta_1,\theta_2$ of $G$ by $\theta_1(a) = a^2,
\theta_1(b) = b$, $\theta_2(a) = a, \theta_2(b) = b^2$ and set $P =
|\theta_1,\theta_2\rangle \cong \N^2$. It is clear that the induced action
$\theta$ of $P$ on $G$ respects the order. Additionally, $\bigcap_{p \in P}
\theta_p(G) = \{1_G\}$ is easily checked using the word length coming from
$\{a,a^{-1},b,b^{-1}\}$. To see that $[G:\theta_1(G)] = \infty$ holds, note
that the family $\left((ab)^j\right)_{j\geq 1}$ yields mutually distinct
left-cosets in $G/\theta_1(G)$. The same argument with $ba$ instead of $ab$
shows that $\theta_2(G)$ has infinite index in $G$. Thus, $C^*(S)$ is purely
infinite and simple provided that $\Phi_\DD$ is faithful. One can show that this amounts to amenability of the action $\mathbb{F}_2
\stackrel{\tau}{\curvearrowright} \DD$.
\end{example}

\noindent This example can be viewed as belonging to a larger class, as described next.
The inspiration for these examples was \cite[Example 2.3.9]{Vie}, where
the single endomorphism $\theta_2$ from Example~\ref{ex:gxp F2} on
$\mathbb{F}_2$ is considered.

\begin{example}\label{ex:gxp Fn}
For $2 \leq n \leq \infty$, let $\mathbb{F}_n$ be the free group in $n$
generators $(a_k)_{1 \leq k \leq n}$. Fix $1 \leq d \leq n$ and choose for
each $1 \leq i \leq d$ an $n$-tuple $(m_{i,k})_{1 \leq k \leq n} \subset
\N^\times$ such that
\begin{enumerate}[1)]
\item for each $1 \leq i \leq d$, there exists $k$ such that $m_{i,k} > 1$, and
\item for all $1 \leq i,j \leq d, i \neq j$ and $1 \leq k \leq n$, $m_{i,k}$
and $m_{j,k}$ are relatively prime.
\end{enumerate}
Then $\theta_i(a_k) = a_k^{m_{i,k}}$ defines an injective group endomorphism
of $\mathbb{F}_n$ for each $1 \leq i \leq d$. We set $P = |(\theta_i)_{1\leq
i\leq d}\rangle$. Using 2), one can show that the induced action $\theta$ of
$P$ on $G$ respects the order. As in Example~\ref{ex:gxp F2},
$[G:\theta_p(G)]$ is infinite for every $p \neq 1_P$. The requirement
$\bigcap_{p \in P} \theta_p(G) = \{1_G\}$ reduces to
\begin{enumerate}
\item[3)] For each $1 \leq k \leq n$, there exists $1 \leq i \leq d$
satisfying $m_{i,k} > 1$.
\end{enumerate}
So $C^*(S)$ is purely infinite and simple if conditions 1)$-$3) above are satisfied and
$\Phi_\DD$ is faithful. As in Example~\ref{ex:gxp F2}, the latter corresponds to amenability of the action $\mathbb{F}_n \stackrel{\tau}{\curvearrowright} \DD$.\\
\end{example}

\subsection{Semigroups from self-similar actions}~\\
Another large class of right LCM semigroups arises from self-similar actions,
cf. \cite{Law1, LW, BRRW}. We won't be able to say much here, since conditions
(D2) and (D3) are not likely to hold. However, these semigroups
will satisfy condition (D1), and they will satisfy strong effectiveness in the presence of right cancellation. We include these observations here, as well as a description of those semigroups that satisfy (C1).

Let $X$ be a finite alphabet. We write $X^n$ for the set of all words of
length $n$, and $X^*$ for the set of all finite words. We let $\varnothing$ denote the empty word. Under concatenation of
words, $X^*$ is a semigroup (and is nothing more
than $\F_{|X|}^+$). A {\em self-similar action} is a pair $(G,X)$, where $G$
is
a group acting faithfully on $X^*$ and such that for every
$g \in G$ and $x\in X$, there exists a unique $g|_x \in G$ such that
\begin{equation}\label{eq: ss condition}
 g\cdot (xw) = (g\cdot x) (g|_x \cdot w).
\end{equation}
The group element $g|_x$ is called the \emph{restriction} of $g$ to $x$. The
restriction map can be extended iteratively to all finite words, and satisfies
\[
g|_{vw}=(g|_{v})|_{w}, \quad
gh|_{v} = g|_{h\cdot v} h|_{v}, \quad\text{and}\quad
g|_{v}^{-1} = g^{-1}|_{g\cdot v},
\]
for all $g, h\in G$ and $v,w\in X^*$. Moreover, the map $g:X^n \to X^n$ for $n\geq 1$ given
by $w \mapsto g \cdot w$ is bijective. The proof of these properties and much
more can be found in \cite{NekBook}. The Cuntz-Pimsner algebra $\OO(G,X)$ of a
self-similar group has been studied in \cite{Nek1,Nek2}, and the Toeplitz
algebra $\TT(G,X)$ has been studied in \cite{LRRW}.

To each self-similar action $(G,X)$ there exists a semigroup $X^*\bowtie G$,
which
is the set $X^*\times G$ with multiplication given by
\[(x,g)(y,h) = (x(g\cdot y),g|_y h).\]
The semigroup $X^*\bowtie G$ was introduced in \cite{Law1}, and is an example
of a Zappa-Sz\'{e}p product. The $C^*$-algebra $C^*(X^*\bowtie G)$ was studied
in \cite{BRRW}, and was shown to be isomorphic to $\TT(G,X)$.

 Denote $S=X^*\bowtie G$. Then $S$ is right LCM and the principal right ideals are
determined by the element of $X^*$, in the sense that $(w,g)X^*\bowtie
G=(z,h)X^*\bowtie G$ if and only if $w=z$. The identity in $X^*\bowtie G$ is
$(\varnothing,1_G)$, and we have $(X^*\bowtie G)^*=\{\varnothing\}\times G$. For $x\in X$ let
$G_x$ denote the stabiliser subgroup of $x$ in $G$. The map $\phi_x:G_x\to G$ given by $\phi_x(g)=g\vert_x$
is a homomorphism, see for example \cite[Lemma 3.1]{LW}.

We now show that $S$ satisfies (D1), and we determine the
precise conditions under which  the
action $S^* \curvearrowright\JJ(S)$ given by left multiplication on constructible ideals is strongly
effective.

\begin{lemma}\label{lem: self-sim Q2}
Let $(G,X)$ be a self-similar action. Then $X^*\bowtie G$ satisfies (D1) from
Definition~\ref{def: conditions on S for
using D}.
\end{lemma}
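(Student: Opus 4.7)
The plan is to reduce (D1) to a statement about the action of $G$ on the length-$|w|$ words, using the fact that the group coordinate in $(w,g)$ contributes only a unit factor.

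First, I would record two structural facts. Since $(X^*\bowtie G)^* = \{\varnothing\}\times G$, a unit has the form $x = (\varnothing, h)$. Writing $(w,g) = (w,1_G)(\varnothing,g)$ shows that $(w,g)S = (w,1_G)S$; in particular every principal right ideal depends only on its $X^*$-component, and as a subset of $X^*\times G$ it is exactly $wX^*\times G$. By Lemma~\ref{lem: constr right ideals for right LCM sgp}, these together with $\emptyset$ and $S$ exhaust $\JJ(S)$, so it suffices to check (D1) on principal right ideals.

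Next, I would compute the left translate. For $x = (\varnothing,h)$ and $X = (w,g)S$,
\[
xX = (\varnothing,h)(w,g)S = (h\cdot w,\, h|_w g)S,
\]
which by the previous remark equals $(h\cdot w)X^*\times G$. Thus the comparison of $xX$ and $X$ reduces to comparing $(h\cdot w)X^*$ and $wX^*$ inside $X^*$.

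Now the key observation is that $h$ acts on $X^{|w|}$, so $h\cdot w$ has the same length as $w$. For two words of equal length $u,v\in X^*$, the sets $uX^*$ and $vX^*$ are either equal (when $u=v$) or disjoint. Applying this with $u = h\cdot w$ and $v = w$: if $xX\cap X\neq\emptyset$, then $(h\cdot w)X^*\cap wX^*\neq\emptyset$, so $h\cdot w = w$, and consequently $xX = X$. The case $X=S$ is handled by $xS = S$ for every unit $x$, and $X=\emptyset$ is trivial. No step is really an obstacle here; the whole content of the lemma is the two trivial observations that $G$ acts length-preservingly and that the group factor in a principal ideal is absorbed by units.
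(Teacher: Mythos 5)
Your proof is correct and follows essentially the same route as the paper's: both arguments reduce everything to the $X^*$-coordinate (a principal right ideal $(w,g)S$ depends only on $w$) and then use the fact that the $G$-action preserves word length, so that a non-empty intersection forces $h\cdot w = w$ and hence equality of the ideals. The only cosmetic difference is that you work with the explicit description $(w,g)S = wX^*\times G$, whereas the paper extracts $h\cdot w=w$ from an element-level identity in the intersection.
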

\begin{proof}
Let $(\varnothing,h)\in (X^*\bowtie G)^*$ and $(w,g)X^*\bowtie
G\in\JJ(X^*\bowtie G)$ with
\[
(\varnothing,h)(w,g)X^*\bowtie G\cap (w,g)X^*\bowtie G\not=\emptyset.
\]
Then there are $(w',g'),(w'',g'')$ such that
$(\varnothing,h)(w,g)(w',g')=(w,g)(w'',g'')$, and since $w$ and $h\cdot w$
have the same length, this means $w=h\cdot w$. Then
\[
(\varnothing,h)(w,g)X^*\bowtie G=(h\cdot w,h|_wg)X^*\bowtie
G=(w,h|_wg)X^*\bowtie G=(w,g)X^*\bowtie
G.\qedhere
\]
\end{proof}

\noindent We know from  \cite[Proposition 3.11]{LW} that $X^*\bowtie G$ is right
cancellative if and only if $\{w\in X^*:\exists g\in G\setminus\{1_G\}, g\cdot
w=w\text{ and
}g|_w=1_G\}=\emptyset$. This condition also appears in the following result:

\begin{lemma}\label{lem: se action}
Let $(G,X)$ be a self-similar action. Then the action $S^* \curvearrowright\JJ(S)$
given by left multiplication is strongly
effective in the sense of Definition~\ref{def: conditions on S for using D} if
and only if
\[
\{w\in X^*:\exists g\in G\setminus\{1_G\}, g\cdot w=w\text{ and
}g|_w=1_G\}=\emptyset.
\]
\end{lemma}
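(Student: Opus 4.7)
The plan is to translate strong effectiveness of $S^*{\curvearrowright}\JJ(S)$ into a statement purely about the action $G{\curvearrowright}X^*$, and then compare it with the condition on $\mathcal{N} := \{w \in X^* : \exists g \in G\setminus\{1_G\},\ g\cdot w = w,\ g|_w = 1_G\}$.

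First I would nail down how principal right ideals interact with left multiplication by $S^* = \{\varnothing\} \times G$. Since $g$ acts bijectively on $X^n$ for every $n$, a short calculation shows $(w, g_1)S = (w, g_2)S$ for any $g_1, g_2 \in G$, so a principal right ideal $(w, g)S$ is determined by its word part $w$. Hence for any $h \in G$,
\[
(\varnothing, h)(w, g)S = (h\cdot w,\ h|_w g)S,
\]
and this coincides with $(w, g)S$ if and only if $h\cdot w = w$, exactly as in the proof of Lemma~\ref{lem: self-sim Q2}.

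Next I would rephrase strong effectiveness. It asserts that for every $h \neq 1_G$ and every $(v, g) \in S$ some $q \in (v, g)S$ satisfies $(\varnothing, h)qS \neq qS$. Writing $q = (v, g)(w', g') = (v(g\cdot w'),\ g|_{w'} g')$ and using that $w' \mapsto g\cdot w'$ is a bijection of $X^*$, the first coordinate of $q$ ranges over all extensions of $v$ as $w'$ varies. Combined with the observation above, strong effectiveness is therefore equivalent to the following statement: for every $h \in G \setminus \{1_G\}$ and every $v \in X^*$, there exists $u \in X^*$ with $h\cdot (vu) \neq vu$.

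Finally I would prove the equivalence of this reformulation with $\mathcal{N} = \emptyset$. If the reformulation fails, then some $h \neq 1_G$ fixes every extension of some $v \in X^*$; taking $u = \varnothing$ gives $h\cdot v = v$, and then the self-similarity identity $h\cdot (vu) = (h\cdot v)(h|_v \cdot u) = v(h|_v \cdot u)$ forces $h|_v \cdot u = u$ for all $u \in X^*$. Faithfulness of $G{\curvearrowright}X^*$ yields $h|_v = 1_G$, so $v \in \mathcal{N}$. Conversely, given $v \in \mathcal{N}$ witnessed by some $h \neq 1_G$, the same self-similarity identity delivers $h\cdot(vu) = (h\cdot v)(h|_v\cdot u) = vu$ for every $u \in X^*$, so strong effectiveness fails at $x = (\varnothing, h)$ and $p = (v, 1_G)$. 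The only delicate point is invoking faithfulness of $G{\curvearrowright}X^*$ at the right moment; all other steps are immediate consequences of the multiplication rule in $X^*\bowtie G$ and the self-similarity identity already used in Lemma~\ref{lem: self-sim Q2}.
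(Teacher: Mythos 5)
Your proof is correct and follows essentially the same route as the paper's: both reduce the question to the fact that principal right ideals are determined by their word part, apply the self-similarity identity $h\cdot(vu)=(h\cdot v)(h|_v\cdot u)$, and invoke faithfulness of $G\curvearrowright X^*$ at the same point (to pass between ``$h|_v$ fixes all words'' and ``$h|_v=1_G$''). Your intermediate combinatorial reformulation of strong effectiveness is a clean way to organise the same two computations the paper carries out directly.
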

\begin{proof}
We prove the contrapositive of the forward implication. Suppose $w\in X^*$ and
$g\in G$ with $g\cdot w=w$ and $g|_w=1_G$. Then $(\varnothing,g)\in
(X^*\bowtie G)^*$ and $(w,h)\in S$ satisfy
\[
(\varnothing,g)(w,h)(z,k)X^*\bowtie G=(g\cdot w,g|_wh)(z,k)X^*\bowtie
G=(w,h)(z,k)X^*\bowtie G,
\]
for all $(z,k)\in X ^*\bowtie G$. So the action is not strongly effective.

For the reverse implication, suppose $(\varnothing,g)\in (X^*\bowtie G)^*$ and
$(w,g)\in X^*\bowtie G$. If $g\cdot w\not=w$, then
$(\varnothing,g)(w,h)X^*\bowtie G\not=(w,h)X^*\bowtie G$. If $g\cdot w=w$,
then $g|_w=1_G$ by assumption. Choose $z\in X^*$ such that $g|_w\cdot(h\cdot
z)\not= h\cdot z$. Then
\begin{align*}
(\varnothing,g)(w,h)(z,1_G)X^*\bowtie G &= \big(w(g|_w\cdot (h\cdot
z)),(g|_wh)|_z\big)X^*\bowtie G\\
& \not= (w(h\cdot z),(g|_wh)|_z)X^*\bowtie
G\\
&=(w(h\cdot z),h|_z)X^*\\
&=(w,g)(z,1_G)X^*\bowtie G.
\end{align*}
So the action is strongly effective.
\end{proof}

\noindent We can describe those semigroups $X^*\bowtie G$ that satisfy (C1). Recall  from \cite[page 22]{LW}
 (or \cite{Nek1}) that a self-similar action of $G$ on $X$ is \emph{recurrent}  if the action of $G$ on $X$ is transitive
 and the homomorphism $\phi_x$ is surjective for any $x\in X$. By \cite[Lemma 1.3(8)]{LW}, the last condition is equivalent to
  $\phi_w$ being surjective for all $w\in X^*.$

 \begin{lemma}\label{lem:recurrent}
Let $G$ be a recurrent self-similar action on $X$. Then $S:=X^*\bowtie G$  satisfies (C1). In the converse direction, if $S$ satisfies (C1), then all maps $\phi_x$ for $x\in X$ are surjective.
 \end{lemma}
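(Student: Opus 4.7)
The plan is to unpack condition (C1) in the Zappa--Sz\'ep product $S = X^*\bowtie G$ using the multiplication rule $(x,g)(y,h)=(x(g\cdot y),g|_y h)$, and translate it into the surjectivity condition on the restriction maps $\phi_w$. Recall $S^* = \{\varnothing\}\times G$, so (C1) amounts to the following: for every $(w,h)\in S$ and every $g\in G$, there must exist $g'\in G$ with $(w,h)(\varnothing,g) = (\varnothing, g')(w,h)$.

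First I would compute both sides of this equation directly. Using $h\cdot\varnothing = \varnothing$ and $h|_\varnothing = h$, the left-hand side equals $(w,hg)$, while the right-hand side equals $(g'\cdot w,\, g'|_w\, h)$. Comparing entries, the condition (C1) is equivalent to the requirement that for every $(w,h)\in S$ and every $g\in G$ there exists $g'\in G$ satisfying
\[
g'\cdot w = w \quad\text{and}\quad g'|_w = hgh^{-1}.
\]
In other words, for every $w\in X^*$ the homomorphism $\phi_w:G_w\to G$ must hit every conjugate of every element of $G$, hence every element of $G$; that is, $\phi_w$ must be surjective for every $w\in X^*$.

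For the forward direction, assume $(G,X)$ is recurrent. Then $\phi_x$ is surjective for every $x\in X$, and by \cite[Lemma 1.3(8)]{LW} $\phi_w$ is surjective for every $w\in X^*$. Given $(w,h)\in S$ and $g\in G$, surjectivity of $\phi_w$ produces $g'\in G_w$ with $\phi_w(g')=g'|_w=hgh^{-1}$. Since $g'\in G_w$ we also have $g'\cdot w=w$, so by the computation above $(\varnothing,g')(w,h)=(w,h)(\varnothing,g)$, establishing (C1).

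For the (partial) converse, assume $S$ satisfies (C1). Fix $x\in X$ and $g\in G$. Applying (C1) to the element $(x,1_G)\in S$ and the unit $(\varnothing,g)\in S^*$ produces $g'\in G$ with $(x,1_G)(\varnothing,g) = (\varnothing,g')(x,1_G)$; that is, $(x,g)=(g'\cdot x, g'|_x)$. Thus $g'\in G_x$ and $\phi_x(g')=g'|_x=g$, showing that $\phi_x:G_x\to G$ is surjective. The only mildly delicate point is keeping the conjugation $hgh^{-1}$ straight in the forward direction, but this causes no trouble because we only need surjectivity of $\phi_w$, not a specific preimage. Note that the converse does not recover transitivity of the $G$-action on $X$, which is why only half of the definition of recurrence is obtained.
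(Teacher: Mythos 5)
Your proposal is correct and follows essentially the same route as the paper: unpack the equation $(w,h)(\varnothing,g)=(\varnothing,g')(w,h)$ to reduce (C1) to surjectivity of $\phi_w$ (hitting the conjugate $hgh^{-1}$), use \cite[Lemma 1.3(8)]{LW} for the forward direction, and specialise to a single letter for the converse. The only cosmetic difference is that the paper tests (C1) on $(x,g^{-1})$ rather than $(x,1_G)$ in the converse, which changes nothing.
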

 \begin{proof} Let $(w,g)\in S$ and $(\varnothing, h)\in S^*$. We will show that there is $(\varnothing, k)\in S^*$
 such that $(w,g)(\varnothing, h)=(\varnothing, k)(w,g)$.  Since $\phi_w$ is surjective, there is $k\in G_w$
 such that $\phi(k)=ghg^{-1}$. In other words, there is $k\in G$ with $k\cdot w=w$ and $k\vert_w=ghg^{-1}$.
 Then
 \[
 (\varnothing, k)(w,g)=(k\cdot w, k\vert_wg)=(w, ghg^{-1}g)=(w,g)(\varnothing, h),
 \]
 showing (C1).

 In the other direction, let $x\in X$ and $g\in G$. From (C1) applied to $(x, g^{-1})\in S$ and $(\varnothing, g)
 \in S^*$, there is $(\varnothing, k)\in S^*$ such that $(x, g^{-1})(\varnothing, g)=(x,e_G)=(\varnothing, k)(x, g^{-1})$, which is
 $(k\cdot x, k\vert_{x}g^{-1})$. This says that $k\in G_x$ and $\phi_x(k)=g$, showing surjectivity for all $x\in X$ (hence for all $x\in X^*$ by \cite[Lemma 1.3(8)]{LW}.)
 \end{proof}

\noindent It would be interesting to know if for the latter class one can prove a uniqueness result using the expectation onto the $C^*$-subalgebra $\CC_I$.

\end{document}